\documentclass[a4paper,11pt]{article}
\usepackage[utf8]{inputenc}
\usepackage{amsmath}
\usepackage{amssymb}
\usepackage{amsthm}
\usepackage{xcolor}

\usepackage{comment}

\textheight23.8cm
 \textwidth16.5cm \hoffset=-1.9cm \voffset=-1.6cm

\newcommand{\norm}[1]{\lvert\lvert#1\rvert\rvert}

\theoremstyle{plain}
\newtheorem{thm}{Theorem}[section]
\newtheorem{Prop}[thm]{Proposition}
\newtheorem{Def}[thm]{Definition}
\newtheorem{lma}[thm]{Lemma}
\newtheorem{cor}[thm]{Corollary}

\newtheorem{remark}[thm]{Remark}

\newtheorem*{example*}{Example}
\newtheorem*{remark*}{Remark}

\newcommand{\Ric}{\mathrm{Ric}}

\newcommand{\Lip}{\mathrm{Lip}}
\newcommand{\F}{\mathcal{F}}
\newcommand{\E}{\mathcal{E}}

\newcommand{\eps}{\varepsilon}

\newcommand{\cC}{\mathcal C}

\def\N{{\mathbb N}}

\def\R{{\mathbb R}}

\def\D{{\mathcal D}}



\bibliographystyle{plain}

\title{Functional inequalities for the heat flow on time-dependent metric measure spaces}
\author{Eva Kopfer,
 \quad Karl-Theodor Sturm\quad
 \thanks{Institut f\"ur Angewandte Mathematik, Universit\"at Bonn, Endenicher Allee 60, 53115 Bonn, Germany (\texttt{eva.kopfer@iam.uni-bonn.de, sturm@uni-bonn.de})
 Both authors gratefully acknowledge  support by the German Research Foundation through the Hausdorff Center for Mathematics and the Collaborative Research Center 1060. The second author also gratefully acknowledges 
 support by the European Union through the ERC-AdG ``RicciBounds''.}
 }
\date{}

\begin{document}

\maketitle
\begin{abstract} 
We prove that  synthetic lower Ricci bounds for metric measure spaces -- both in the sense of Bakry-\'Emery and in the sense of Lott-Sturm-Villani --
can be characterized by various functional inequalities including local Poincar\'e  inequalities, local logarithmic Sobolev inequalities, dimension independent Harnack inequality, and logarithmic Harnack inequality.

More generally, these equivalences will be proven in the setting of time-dependent metric measure spaces 
and  will provide a characterization of super-Ricci flows  of metric measure spaces.
\end{abstract}

\tableofcontents

\section{Introduction}
\subsection{Setting}
Huge research interest and extensive literature is devoted to the study of functional inequalities for the heat equation, both on Riemannian manifolds and on more abstract spaces.
Of particular importance are  functional inequalities which are equivalent to a uniform lower bound on the Ricci curvature, say $\Ric_g\ge K\cdot g$.
In F.-Y.\ Wang's monograph \cite{wang2014analysis}, Theorem 2.3.3., an impressive collection of 15 equivalent properties is listed.

In principle, all these properties and equivalences should hold -- and indeed most of them do hold -- in much more general settings.
Many of them have been reformulated and proven in the setting of Markov diffusion semigroups and $\Gamma$-calculus, initiated by the seminal work of Bakry \& \'Emery \cite{bakryemery} and culminating now in the monograph \cite{Bakry} of Bakry, Gentil and Ledoux,
see 
Theorems 4.7.2, 5.5.2, 5.5.5, 5.6.1
and Remark 5.6.2 in \cite{Bakry}.

Another, more recent, important setting for the study of heat equations and functional inequalities are metric measure spaces, in particular, such mm-spaces which are infinitesimally Hilbertian and which satisfy a synthetic lower Ricci bound as introduced in the
foundational works of  Sturm \cite{sturm2006} and
  Lott\,\&\,Villani \cite{lott2009ricci}.
In a series of ground breaking papers, Ambrosio, Gigli\,\&\,Savar\'e \cite{agscalc, agsmet, agsbe} introduced and analyzed the heat flow on such spaces and derived various functional inequalities. In particular, they proved that both  the
Bochner inequality (without dimensional term) and the $L^2$-gradient estimate are equivalent to the synthetic Ricci bound CD$(K,\infty)$; and they deduced 
the local Poincar\'e inequality and the logarithmic Harnack inequality.
Savar\'e \cite{savare} extended the powerful self-improvement property of Bochner's inequality to mm-spaces and utilized it to deduce the $L^1$-gradient estimate;
based on the latter, H.~Li \cite{H.Li} proved the dimension-independent Harnack inequality which in turn implies the logarithmic Harnack inequality.

Only recently, some of these properties and equivalences have been extended to the heat flow on time-dependent Riemannian manifolds, e.g. by Cheng \& Thalmaier \cite{cheng2018evolution}, Haslhofer \& Naber \cite{HN2015}, McCann \& Topping \cite{mccanntopping},  and Cheng \cite{cheng2017diffusion}.
The authors of the current paper had been the first to study the heat flow on time-dependent metric measure spaces \cite{sturm2016}, to introduce the time-dependent counterpart of synthetic lower Ricci bounds, and to derive various functional inequalities equivalent to it.

\medskip

  Here and throughout this paper, the setting will be as follows.
$(X,d_t,m_t)_{t\in I}$ is a time-dependent metric measure space where $I=(0,T)$ and $X$ is a topological space.  The Borel measures $m_t=e^{-f_t}m$ and the geodesic distances $d_t$ are assumed to be  logarithmic Lipschitz continuous in time. Moreover, the maps $x\mapsto f_t(x)$ are assumed to be bounded and Lipschitz continuous. That is, there exists a constant $L>0$ such that for all $x,y\in X$ and $s,t\in I$
\begin{align}\tag{{\bf A1.a}}
|f_t(x)-f_s(y)|\leq L|t-s|+Ld_t(x,y), \qquad
\Big|\log\frac{d_t(x,y)}{d_s(x,y)}\Big|\leq L|t-s|.
\end{align}
Furthermore,  for some  $K\in\mathbb R$ and each $t\in I$  the static mm-space
\begin{align}\tag{{\bf A1.b}}\mbox{$(X,d_t,m_t)$  satisfies the  condition RCD$(K,\infty)$.}
\end{align}

The static mm-space $(X,d_t,m_t)$ 
defines a Dirichlet form $\E_t$, a Laplacian $\Delta_t$, and a square field operators $\Gamma_t$ related to each other via
\begin{align}-\int_X u\,\Delta_t v\,dm_t=\E_t(u,v)=\int_X\Gamma_t(u,v)\,dm_t\qquad \forall u\in\D(\E_t), v\in\D(\Delta_t).
\end{align}

 The domains $\D(\E_t)$ define Hilbert spaces with scalar products $\int uv\, dm_t+\E_t(u,v)$. Note that the scalar products are mutually equivalent, since we have uniform ellipticity by \textbf{(A1.a)}, i.e.
 \begin{align}\label{eq: s.f.}
 e^{-2L|t-s|}\Gamma_s(u)\leq \Gamma_t(u)\leq e^{2L|t-s|}\Gamma_s(u),
 \end{align}
 for some constant $C>0$, and for all $s, t\in I$.
 We fix an arbitrary $t$ and set $\F=\D(\E)$ as a reference Hilbert space.
 We have the dense and continuous embeddings $\F\subset L^2(X,m)\subset \F^*$, where $\F^*$ denotes the dual space of $\F$.
 Similarly $L^p(X)=L^p(X,m)$ will serve as a reference $L^p$-space.

The family of mm-spaces $(X,d_t,m_t)_{t\in I}$ defines a 2-parameter family of heat propagators $(P_{t,s})_{s\leq t}$ and adjoint propagators $(P^*_{t,s})_{s\leq t}$ on $L^2(X,m)$, see \cite{sturm2016} for details. 
The heat flow $t\mapsto u_t=P_{t,s}u$ provides solutions to the heat equation
\begin{align*}
\partial_tu_t=\Delta_tu_t \text{ on } (s,T)\times X \text{ with }u_s=u,
\end{align*}
  whereas 
 $s\mapsto P_{t,s}^*v$ provides solutions to  the adjoint heat equation
\begin{align*}
\partial_sv_s=\Delta_s^*v_s:=-\Delta_sv_s+v_s\dot f_s \text{ on } (0,t)\times X \text{ with }v_t=v.
\end{align*}

By duality, the propagator $(P_{t,s})_{s\leq t}$ acting on bounded continuous functions induces a dual propagator $(\hat P_{t,s})_{s\leq t}$ acting on probability measures as follows
\begin{align*}
\int u\, d(\hat P_{t,s}\mu)=\int P_{t,s} u\, d\mu \qquad \forall u\in \mathcal C_b(X), \forall \mu\in\mathcal P(X). 
\end{align*}

The  main result of our previous paper  
is the characterization of  super-Ricci flows of mm-spaces in terms of the heat flow on them. For $t\in I$, let  $W_t$ denote the $L^2$-Kantorovich-Wasserstein metric with respect to $d_t$ and let $S_t(\mu)=\int \log (d\mu/dm_t)\, d\mu$ denote the relative Boltzmann entropy with respect to $m_t$.

\begin{thm}[\cite{sturm2016}]\label{old}
The following assertions are equivalent:
\begin{enumerate}
\item[\bf(i)]  For a.e.\ $t\in (0,T)$ and every  $W_t$-geodesic  $(\mu^a)_{a\in[0,1]}$ in $\mathcal P(X)$ with
$\mu^0,\mu^1\in \D(S)$
\begin{equation}\label{est-I}\tag{{\bf E1}}
\partial_a S_t(\mu^{a})\big|_{a=1}-\partial_a S_t(\mu^{a})\big|_{a=0}
\ge- \frac 12\partial_tW_{t}^2(\mu^0,\mu^1).
\end{equation}
\item[\bf(ii)] For all $0<s<t<T$ and $\mu,\nu\in\mathcal P(X)$
\begin{equation}\label{est-II}\tag{{\bf E2}}
W_s (\hat P_{t,s}\mu,\hat P_{t,s}\nu)\le W_t (\mu,\nu)
\end{equation}
\item[\bf(iii)] For all $u\in\F$ and all $0<s<t<T$
\begin{equation}\label{est-III}\tag{{\bf E3}}
\Gamma_t(P_{t,s}u)\le P_{t,s}\big(\Gamma_s (u)\big)
\end{equation}
  \item[\bf(iv)] For all $0<s<t<T$ and for all $u_s,g_t\in\F$ with $g_t\ge0$,  $g_t\in L^\infty(X,m)$, $u_s\in \Lip(X)$
and for a.e.\ $r\in(s,t)$ 
 \begin{equation}\label{est-IV}\tag{{\bf E4}}
 {\bf \Gamma}_{2,r}(u_r)(g_r)
 \geq\frac12\int\stackrel{\bullet}{\Gamma}_r(u_r)g_rdm_r
\end{equation}
where $u_r=P_{r,s}u_s$ and $g_r=P^*_{t,r}g_t$.
\end{enumerate}
\end{thm}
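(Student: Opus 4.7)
The plan is to establish the cyclic chain of implications (iv) $\Rightarrow$ (iii) $\Rightarrow$ (ii) $\Rightarrow$ (i) $\Rightarrow$ (iv). This follows the architecture introduced by Ambrosio-Gigli-Savar\'e and Savar\'e in the static case, with the necessary adaptation to the time-dependence carried by $\dot f_s$ and by the quantity $\stackrel{\bullet}{\Gamma}_r$ appearing in (E4). The static RCD$(K,\infty)$ assumption (A1.b) at each time $t$ will be invoked to justify all differential calculus (chain rules, self-improvement, second-order analysis), while the log-Lipschitz assumption (A1.a) together with the ellipticity (\ref{eq: s.f.}) will be needed to control the $t$-variation of the form.

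For (iv) $\Rightarrow$ (iii), the central object is the auxiliary functional
\begin{equation*}
F(r) \ := \ \int_X \Gamma_r\!\bigl(P_{r,s}u_s\bigr)\cdot P^*_{t,r}g_t \; dm_r,\qquad r\in[s,t].
\end{equation*}
I would differentiate $F$ in $r$, using $\partial_r u_r=\Delta_r u_r$, $\partial_r g_r=-\Delta_r^* g_r$, and $\partial_r m_r=-\dot f_r\,m_r$. After integrating by parts the Laplacian terms one reassembles a $\boldsymbol{\Gamma}_{2,r}(u_r)(g_r)$ contribution, while the remaining time-derivatives of $\Gamma_r$ and of $m_r$ collect into the $\stackrel{\bullet}{\Gamma}_r$ term. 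Thus (E4) becomes exactly the statement $\tfrac{d}{dr}F(r)\geq 0$, and comparing the values $F(s)$ and $F(t)$ yields (E3).

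For (iii) $\Rightarrow$ (ii), I would invoke a Kuwada-type duality, here in its time-dependent form: the pointwise gradient estimate $\Gamma_t(P_{t,s}u)\leq P_{t,s}\bigl(\Gamma_s(u)\bigr)$ turns $d_s$-Lipschitz data into $d_t$-Lipschitz solutions with the same constant, so that Kantorovich-Rubinstein duality produces (E2) for $\hat P_{t,s}$. For (ii) $\Rightarrow$ (i), the idea is that the Wasserstein contraction, evaluated on a $W_t$-geodesic $(\mu^a)$ and differentiated in $s\uparrow t$, expresses itself as a lower bound on the ``acceleration'' of entropy along the geodesic, producing (E1) once the time-derivative $\partial_t W_t^2$ is accounted for in the Otto picture. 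Finally, (i) $\Rightarrow$ (iv) is obtained by applying the dynamic $K=0$-convexity of $S_t$ to suitable one-parameter families of probability measures of the form $(\rho_r+a\eta)\,m_r$, and expanding the second-order variation to identify the Bochner quantity $\boldsymbol{\Gamma}_{2,r}$ on one side and $\int\stackrel{\bullet}{\Gamma}_r g\,dm_r$ on the other; the static RCD$(K,\infty)$ validity provides the regularity and the Hessian bounds needed to justify the second differentiation.

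The main obstacle is the consistent treatment of the time variation in all these steps: existence, regularity, and adjointness of the two-parameter propagators $P_{t,s},P^*_{t,s}$ across a moving inner product must be established before any of the differential identities above are meaningful, and the self-improvement of Bochner's inequality, which underlies the passage between $\Gamma$-estimates and Bochner-type estimates, has to be re-derived in a form that carries the extra $\stackrel{\bullet}{\Gamma}_r$-term cleanly. Assumption (A1.a) enters crucially to bound the cross-error terms arising from $|\dot f_s|$ and from the logarithmic derivative of $d_t$, so that the family of Hilbert spaces $(\D(\E_t),\E_t+\|\cdot\|_{L^2(m_t)}^2)$ can be treated as a single Hilbert space with $t$-dependent but uniformly equivalent norms.
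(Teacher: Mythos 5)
Two preliminary remarks: the present paper does not prove Theorem \ref{old} at all — it is quoted from \cite{sturm2016} — and the earlier proof's architecture (interpolation functional for \eqref{est-III}$\Leftrightarrow$\eqref{est-IV}, a dynamic Kuwada duality with the Hopf--Lax semigroup for \eqref{est-II}$\Leftrightarrow$\eqref{est-III}, and duality/EVI- and action-type estimates with coupled forward and adjoint backward flows for the passage to and from dynamic convexity \eqref{est-I}) is only partially reproduced by your cycle. The genuine gaps are in the two implications that close your cycle through (i). For (i) $\Rightarrow$ (iv): inequality \eqref{est-I} is a statement \emph{only} about $W_t$-geodesics, while the curves $a\mapsto(\rho_r+a\eta)\,m_r$ you propose to test it on are vertical (linear) perturbations and are not $W_t$-geodesics; dynamic convexity gives no information along them, and no $\boldsymbol{\Gamma}_{2,r}$-term can be extracted this way. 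The second variation of the entropy is a $\Gamma_2$-expression only along geodesics generated by (Hopf--Lax regularized) Kantorovich potentials, and both in the static theory and in \cite{sturm2016} the route from convexity to Bochner is indirect: convexity $\Rightarrow$ transport/gradient estimates via Hamilton--Jacobi duality, then Bochner by differentiating the interpolation functional. Similarly, (ii) $\Rightarrow$ (i) ``by differentiating the contraction in $s\uparrow t$'' is not an argument: \eqref{est-II} is zeroth order in the geodesic parameter, whereas \eqref{est-I} requires the difference of the first $a$-derivatives of $S_t$ at the two endpoints against $\partial_t W_t^2$; recovering this needs the dual representation of $\partial_t W_t^2$ and entropy estimates along forward/adjoint-backward evolved curves, a substantial argument that your sketch does not supply.

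Two further points are fixable but should be corrected. In (iv) $\Rightarrow$ (iii), with your functional $F(r)=\int\Gamma_r(P_{r,s}u_s)\,P^*_{t,r}g_t\,dm_r$ one computes, using $\partial_r u_r=\Delta_r u_r$, $\partial_r g_r=-\Delta_r g_r+g_r\dot f_r$ and $\partial_r(dm_r)=-\dot f_r\,dm_r$, that $F'(r)=\int\stackrel{\bullet}{\Gamma}_r(u_r)g_r\,dm_r-2\,\boldsymbol{\Gamma}_{2,r}(u_r)(g_r)$; hence \eqref{est-IV} says $F$ is \emph{nonincreasing}, and \eqref{est-III} follows from $F(t)\le F(s)$. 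The monotonicity you state ($F'\ge0$) would give the reverse of \eqref{est-III}. In (iii) $\Rightarrow$ (ii), Kantorovich--Rubinstein duality via Lipschitz preservation only yields $W_1$-type contraction; since $W_t$ is the $L^2$-Kantorovich metric, the $L^2$-gradient estimate must be dualized through Kuwada's lemma with the Hopf--Lax semigroup adapted to the time-dependent distances $d_t$ — exactly the nontrivial dynamic ingredient of \cite{sturm2016} — rather than through Lipschitz contraction.
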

Here
 $${\bf \Gamma}_{2,r}(u_r)(g_r):= \int\Big[\frac12\Gamma_{r}(u_r)\Delta_r g_r
 +(\Delta_r u_r)^2g_r
 +\Gamma_r(u_r,g_r)\Delta_ru_r\Big]dm_r$$ denotes the distribution valued $\Gamma_2$-operator (at time $r$) applied to $u_r$ and tested against $g_r$
 and
$$\stackrel{\bullet}{\Gamma}_r(u_r):=\mbox{w-}\lim_{\delta\to0}\
\frac1\delta\Big(\Gamma_{r+\delta}(u_r)-\Gamma_r(u_r)\Big)$$ denotes any subsequential weak limit  of $\frac1{2\delta}\big(\Gamma_{r+\delta}-\Gamma_{r-\delta}\big)(u_r)$ in $L^2((s,t)\times X)$.

We say that a one-parameter family of mm-spaces $(X,d_t,m_t)_{t\in I}$ is a  \emph{super-Ricci flow} -- or that it evolves as a super-Ricci flow -- if it satisfies one/each assertion of the previous Theorem. This is a canonical extension of the notion of \emph{super-Ricci flows of Riemannian manifolds} $(M,g_t)$ defined through the tensor inequality
\begin{align*}
\Ric_t\geq -\frac12\partial_tg_t.
\end{align*}

Property (i) above is called \emph{dynamic convexity} of the Boltzmann entropy. This concept has been introduced by the second author in \cite{sturm2015}; it provides a canonical generalization of the synthetic  Ricci bound CD$(0,\infty)$ defined in terms of the semiconvexity of the Boltzmann entropy in the static setting.

Property (iv) is the appropriate generalization of Bochner's inequality  or, in other words, of the Bakry-\'Emery condition to the time-dependent setting. It will be called \emph{dynamic Bochner inequality (integrated in time)}. 

In contrast to that, we say that the \emph{dynamic Bochner inequality pointwise in time} holds 
 if $\forall t\in I$, 
$\forall u,g\in\D(\Delta)\cap L^\infty(X,m)$ with  $\Gamma_t(u)\in L^\infty(X,m)$ and $g\ge0$
\begin{equation}\label{ptwBochner0}\tag{{\bf E5}}
\int \Big[\Gamma_t(u)\Delta_t g
+2(\Delta_t u)^2g
+ 2\Gamma_t(u,g)\Delta_t u
-\partial_t\Gamma_t(u)g\Big] dm_t\ge0.
\end{equation}

In the static case, Bochner's inequality has the remarkable and powerful `self-improvement property' which allows to deduce
improved versions of the  assertions in the previous Theorem, in particular, to derive the $L^1$-gradient estimate.
This self-improvement strategy  in the time-dependent case  requires additional time regularity of the involved quantities.
It was carried out by the first author in \cite{Ko2} and 
 can be reformulated with the notation from the current paper as follows.
 
\begin{thm}[\cite{Ko2}]\label{eva} Assume ({\bf A2.a+c}), see Section 2. Then 
the  $L^2$-gradient estimate \eqref{est-III} is equivalent to the 
 $L^1$-gradient estimate:
 for all $u\in F$ and all $0<s<t<T$
\begin{equation}\label{L1-grad}\tag{{\bf E6}}
\big(\Gamma_t(P_{t,s}u)\big)^{1/2}\le P_{t,s}\big(\Gamma_s (u)^{1/2}\big)
\end{equation}
 Moreover, the dynamic Bochner inequality (integrated in time) implies the dynamic Bochner inequality pointwise in time which in turn implies the $L^1$-gradient estimate as formulated above.
\end{thm}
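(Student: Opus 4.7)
The direction (E6) $\Rightarrow$ (E3) is immediate: squaring the pointwise estimate (E6) and applying Cauchy--Schwarz for the sub-Markovian kernel $P_{t,s}$ yields $\Gamma_t(P_{t,s}u) \leq \big(P_{t,s}\Gamma_s(u)^{1/2}\big)^2 \leq P_{t,s}\Gamma_s(u)$. For the converse (and for the stronger statement of the theorem), the plan is to invoke Theorem~\ref{old} to pass from (E3) to (E4) and then to establish the chain (E4) $\Rightarrow$ (E5) $\Rightarrow$ (E6); the second implication is the core of the argument.

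For \textbf{(E4) $\Rightarrow$ (E5)}, I would localize in time. Fix $t_0 \in I$ and $u, g$ admissible for (E5). For small $\eps > 0$, apply (E4) with $s := t_0 - \eps$, $t := t_0 + \eps$, $u_s := u$, $g_t := g$, and average the resulting a.e.\ inequality over $r \in (t_0 - \eps, t_0 + \eps)$. Under (A2.a+c) the flows $r \mapsto u_r = P_{r,s}u$ and $r \mapsto g_r = P^*_{t,r}g$ are strongly continuous in the relevant norms and the data $\Gamma_r, \Delta_r, m_r$ vary continuously in $r$, so Lebesgue's differentiation theorem collapses the time average onto its value at $r = t_0$ (where $u_r = u$ and $g_r = g$). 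A density argument then removes the remaining regularity gap and gives (E5).

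For \textbf{(E5) $\Rightarrow$ (E6)}, I would carry out the time-dependent analogue of Savar\'e's self-improvement. First, promote (E5) to the pointwise quadratic Bochner inequality
$$\big(\Gamma_r(u) + \eps\big)\big[\Delta_r\Gamma_r(u) - 2\Gamma_r(u, \Delta_r u) - \stackrel{\bullet}{\Gamma}_r(u)\big] \geq \tfrac{1}{2}\Gamma_r(\Gamma_r(u))$$
(to be read in the appropriate distributional sense, for every $\eps>0$ and every admissible $u$). This is the time-dependent counterpart of $\Gamma_2(u) \geq \Gamma(\Gamma(u))/(4\Gamma(u))$, and it is obtained by testing (E5) against $g \cdot \phi(\Gamma_r(u))$ for a smooth $\phi$, using the chain rules $\Delta_r\phi(f) = \phi'(f)\Delta_r f + \phi''(f)\Gamma_r(f)$ and $\Gamma_r(u, \phi(f)) = \phi'(f)\Gamma_r(u, f)$, and optimizing in $\phi$; the extra term $\stackrel{\bullet}{\Gamma}_r(u)g$ is linear in $g$ with no spatial derivatives and so passes through Savar\'e's algebraic manipulations without essential change. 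Second, for admissible $u$ and $s<t$, set $u_r := P_{r,s}u$ and $h_r := \sqrt{\Gamma_r(u_r) + \eps}$, and consider $F_\eps(r) := P_{t,r}h_r$ for $r \in [s,t]$. Using $\partial_r P_{t,r} = -P_{t,r}\Delta_r$, $\partial_r u_r = \Delta_r u_r$, and the same chain rules, direct differentiation gives
$$\partial_r F_\eps(r) = P_{t,r}\!\left[\frac{\stackrel{\bullet}{\Gamma}_r(u_r) + 2\Gamma_r(u_r, \Delta_r u_r) - \Delta_r\Gamma_r(u_r)}{2 h_r} + \frac{\Gamma_r(\Gamma_r(u_r))}{4 h_r^3}\right],$$
and the self-improved Bochner makes the bracket nonpositive. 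Hence $F_\eps$ is nonincreasing, so $\sqrt{\Gamma_t(u_t) + \eps} = F_\eps(t) \leq F_\eps(s) = P_{t,s}\sqrt{\Gamma_s(u) + \eps}$; letting $\eps \downarrow 0$ yields (E6).

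The main obstacle is the self-improvement step. Even statically, Savar\'e's argument is delicate, relying on the full distributional $\Gamma_2$-calculus and the diffusion property of $\Gamma$. In the time-dependent setting one must additionally verify that the weak limit $\stackrel{\bullet}{\Gamma}_r(u_r)$ interacts correctly with composition by smooth scalar functions and with multiplication by test functions from $\D(\Delta_r)$, and that enough joint regularity in $r$ and $x$ is available to commute the various operations -- this is precisely what (A2.a+c) is designed to supply. The presence of $\eps > 0$ is not cosmetic either: it regularizes the square root away from $\{\Gamma_r(u_r) = 0\}$, makes the chain rule for $\Delta_r h_r$ rigorous, and can be sent to zero only at the very end.
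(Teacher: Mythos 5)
This theorem is not proved in the paper at all — it is imported from \cite{Ko2} — and your outline follows essentially the path that is carried out there and partly reproduced here: \eqref{L1-grad}$\Rightarrow$\eqref{est-III} by squaring and Jensen, \eqref{est-III}$\Leftrightarrow$\eqref{est-IV} from Theorem \ref{old}, \eqref{est-IV}$\Rightarrow$\eqref{ptwBochner0} by time-continuity and endpoint evaluation exactly as in Section 2.4, and \eqref{ptwBochner0}$\Rightarrow$\eqref{L1-grad} by the time-dependent Savar\'e self-improvement plus the monotonicity of $r\mapsto P_{t,r}\big(\Gamma_r(P_{r,s}u)+\eps\big)^{1/2}$, with your constants, the derivative formula for $F_\eps$, and the observation that $\stackrel{\bullet}{\Gamma}_r$ transforms quadratically under the chain rule all consistent with that argument. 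Two caveats: at $r=t_0$ your $u_r,g_r$ equal $P_{t_0,t_0-\eps}u$ and $P^*_{t_0+\eps,t_0}g$ rather than $u,g$, so the collapse of the time average also needs $\eps\to0$ together with the continuity assumptions (Section 2.4 in fact uses ({\bf A2.b}) for this step, beyond the stated ({\bf A2.a+c})), and the differentiation of $F_\eps$ must be performed in the weak, measure-valued $\Gamma_2$ framework since $\Gamma_r(u_r)$ need not lie in $\D(\Delta_r)$ --- which is precisely the technical content for which the paper defers to \cite{Ko2}.
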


Additional assumptions on time regularity (e.g. continuity of $t\mapsto \Delta_t P_{t,s}u$ in appropriate spaces) will be also requested for various results of the current paper; we will formulate these assumptions tailor-made in the subsequent sections.

\subsection{Summary of the main results}

Let us summarize the main results of the current paper.
To simplify and unify the presentation here in the introduction, we will restrict ourselves to the case $m_t(X)<\infty$ and in addition to  our standing assumptions ({\bf A1.a+b}) we will 
%
request now all the assumptions which ever will be made in the sequel. 
Besides our standing assumptions ({\bf A1.a+b}), these are assumptions ({\bf A2.a-c}) formulated in Section 2, ({\bf A3}) formulated in Section 3, and assumptions ({\bf A5.a+b}) formulated in Section 5.
  We emphasize that all these extra assumptions are always fulfilled in the static case and they are also satisfied in the case of Riemannian manifolds with metric tensors which smoothly depend on time.
  
\begin{thm}\label{mainthm} Under the previously mentioned assumptions, the following assertions are equivalent:
\begin{enumerate}
\item[\bf(i)] $(X,d_t,m_t)_{t\in I}$ is a super-Ricci flow.
\item[\bf(ii)] One/each of the local Poincar\'e inequalities holds
\begin{subequations}
\label{locpoinc}
 \begin{align}
  P_{t,s}(u^2)(x)-(P_{t,s}u)^2(x)\leq&2(t-s)P_{t,s}(\Gamma_su)(x) \label{locpoinc1}\tag{{\bf E7}}\\
  P_{t,s}(u^2)(x)-(P_{t,s}u)^2(x)\geq &2(t-s)\Gamma_t(P_{t,s}u)(x). \label{locpoinc2}\tag{{\bf E8}}
 \end{align}
 \end{subequations}
\item[\bf(iii)] One/each of the local logarithmic Sobolev inequalities holds
\begin{subequations}
\label{logsob}
\begin{align}
P_{t,s}(u\log u)-P_{t,s}u\log P_{t,s}u\leq& (t-s)P_{t,s}\left(\frac{\Gamma_s(u)}{u}\right),\label{logsob1}\tag{{\bf E9}}\\
P_{t,s}(u\log u)-P_{t,s}u\log P_{t,s}u\geq& (t-s)\frac{\Gamma_t(P_{t,s}u)}{P_{t,s}u}. \label{logsob2}\tag{{\bf E10}}
\end{align}
\end{subequations}
\item[\bf(iv)] The dimension independent Harnack inequality holds for one/each $\alpha\in (1,\infty)$
 \begin{align}\label{dimHarn}\tag{{\bf E11}}
  (P_{t,s}u)^\alpha(y)\leq P_{t,s}(u^\alpha)(x)\exp\left\{\frac{\alpha d_t^2(x,y)}{4(\alpha-1)(t-s)}\right\}.
 \end{align}
 \item[\bf(iv)] The
  logarithmic Harnack inequality holds
 \begin{align}\label{logHarn}\tag{{\bf E12}}
 P_{t,s}(\log u)(x)  \leq \log (P_{t,s}u)(y)+\frac{d_t^2(x,y)}{4(t-s)}.
 \end{align}
\end{enumerate}
The formulation ``one/each'' in particular means that \emph{one} of the respective properties implies \emph{each} of the respective properties.
\end{thm}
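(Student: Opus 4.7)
The plan is to reduce everything to the $L^2$-gradient estimate \eqref{est-III}, which by Theorem \ref{old} is equivalent to (i) and, by Theorem \ref{eva} under our regularity hypotheses, to the $L^1$-gradient estimate \eqref{L1-grad}. It then suffices to prove that each of the inequalities \eqref{locpoinc1}--\eqref{logHarn} is separately equivalent to \eqref{est-III}.

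For the forward implications I use the Bakry-\'Emery interpolation along the heat flow. For convex $\psi\in C^2$ and $s<t$, set
\[F(r):=P_{t,r}\bigl(\psi(P_{r,s}u)\bigr),\qquad s\le r\le t,\]
so $F(s)=P_{t,s}\psi(u)$ and $F(t)=\psi(P_{t,s}u)$. The forward heat equation for $P_{r,s}u$ combined with the dual relation $\partial_r P_{t,r}\phi=-P_{t,r}\Delta_r\phi$ yields the pointwise identity $-F'(r)=P_{t,r}\bigl(\psi''(P_{r,s}u)\,\Gamma_r(P_{r,s}u)\bigr)$. Taking $\psi(x)=x^2$ gives the representation
\[P_{t,s}(u^2)-(P_{t,s}u)^2=2\int_s^t P_{t,r}\bigl(\Gamma_r(P_{r,s}u)\bigr)\,dr,\]
and \eqref{locpoinc1}, \eqref{locpoinc2} follow by applying \eqref{est-III} on the subintervals $[s,r]$, respectively $[r,t]$, to bound the integrand. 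The same scheme with $\psi(x)=x\log x$ produces \eqref{logsob2}; for \eqref{logsob1} I additionally invoke \eqref{L1-grad} and a Cauchy-Schwarz step to obtain $\Gamma_r(P_{r,s}u)/P_{r,s}u\le P_{r,s}\bigl(\Gamma_s u/u\bigr)$. To derive \eqref{dimHarn} I follow Wang's method: along a $d_t$-geodesic $\gamma$ joining $x$ to $y$, I differentiate $a\mapsto\log P_{t,s}(u^\alpha)(\gamma(a))$ after a suitable reparametrisation and use \eqref{L1-grad} to control $\sqrt{\Gamma_t(P_{t,s}(u^\alpha))}$; optimisation in the resulting differential inequality yields \eqref{dimHarn}. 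The logarithmic Harnack \eqref{logHarn} then follows by sending $\alpha\to\infty$ after rescaling $u$.

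For the converse I extract the dynamic Bochner inequality \eqref{est-IV} from each of the assumed inequalities. Applying \eqref{locpoinc1} with $(r,t)$ in place of $(s,t)$ and initial datum $P_{r,s}u$ and combining with the integral representation above shows that $F$ is convex on $[s,t]$; a further differentiation, using $\partial_r\Gamma_r(P_{r,s}u)=\stackrel{\bullet}{\Gamma}_r(P_{r,s}u)+2\Gamma_r(P_{r,s}u,\Delta_r P_{r,s}u)$ and the definition of $\Gamma_2$, identifies $F''(r)$ weakly with $2P_{t,r}\bigl[2\Gamma_{2,r}(P_{r,s}u)-\stackrel{\bullet}{\Gamma}_r(P_{r,s}u)\bigr]$, so that convexity of $F$ is equivalent, after testing against $g_t$, to \eqref{est-IV}, hence to (i). The same pattern with $\psi(x)=x\log x$ handles \eqref{logsob1}. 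For \eqref{locpoinc2}, \eqref{logsob2}, \eqref{dimHarn}, and \eqref{logHarn} the first-order Taylor expansion in $t-s$ is an identity, so I expand to second order; the resulting inequality, after suitable localisation, is a weak form of the pointwise Bochner inequality \eqref{ptwBochner0}, and Theorem \ref{eva} then yields (i).

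The main obstacle is not the algebra, which runs parallel to the static Bakry-\'Emery theory, but the rigorous justification of the time differentiations and the passage between weak, pointwise, and integrated Bochner formulations. The additional assumptions (A2.a--c), (A3), and (A5.a--b) are calibrated to guarantee enough regularity of $r\mapsto\Delta_r P_{r,s}u$, $r\mapsto\Gamma_r(P_{r,s}u)$, and of the adjoint propagator $r\mapsto P^*_{t,r}g$ acting on admissible test functions so that these manipulations are valid and so that $\stackrel{\bullet}{\Gamma}_r$ exists as a bona fide weak derivative in the sense of Theorem \ref{old}(iv).
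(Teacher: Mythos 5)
Your forward implications run parallel to the paper: the interpolation $F(r)=P_{t,r}\bigl(\psi(P_{r,s}u)\bigr)$ with $\psi(z)=z^2$ resp.\ $z\log z$ is exactly how \eqref{locpoinc1}, \eqref{locpoinc2}, \eqref{logsob1}, \eqref{logsob2} are obtained, and the Poincar\'e-to-Bochner converses via convexity of $F$ (equivalently, comparison of $\Psi(q)=\int g_q\Gamma_q(u_q)\,dm_q$ at different times, using the inequality from Theorem 5.7 of \cite{sturm2016}) match Section 2. Two caveats there: for \eqref{logsob2} the bound $P_{t,r}(\Gamma_r(u_r)/u_r)\ge \Gamma_t(u_t)/P_{t,s}u$ genuinely needs the $L^1$-estimate \eqref{L1-grad} through $\bigl(P_{t,r}\sqrt{\Gamma_r(u_r)}\bigr)^2$, since Jensen fails for $(z,w)\mapsto z/w$ (harmless, given Theorem \ref{eva}, but not ``the same scheme'' with \eqref{est-III} alone); and in an mm-space you cannot literally differentiate $a\mapsto \log P_{t,s}(u^\alpha)(\gamma(a))$ along a $d_t$-geodesic pointwise --- the paper substitutes regular curves in $(\mathcal P_2,W_t)$ with velocity densities and the functional $\Psi^\eps(r)=\int\omega_\eps(P_{t,r}\eta_\eps(P_{r,s}u))\,d\mu_r$, which is the technically essential device for \eqref{dimHarn}.

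The genuine gap is in your converse from the Harnack inequalities. A ``second-order Taylor expansion in $t-s$'' of \eqref{dimHarn} or \eqref{logHarn} does not produce Bochner: at $x=y$ the metric term vanishes and both inequalities reduce to Jensen's inequality, carrying no curvature information, while for fixed $x\ne y$ the term $d_t^2(x,y)/(t-s)$ blows up as $s\to t$, so there is no meaningful expansion pointwise. The curvature content is only extracted by coupling the spatial and temporal variables: the paper integrates \eqref{dimHarn} with $\alpha=1+\tau$ against a $W_t$-optimal coupling of $\mu=g\,m_t$ and the perturbed measure $\mu_\tau=g(1-\tau\Delta^g\psi)m_t$, controls $W_t^2(\mu,\mu_\tau)/\tau^2$ by $\int\Gamma_t(\psi)g\,dm_t$ via the Hopf--Lax semigroup, and then chooses $\psi_0=-2(t-s)\log P_{t,s}u$ to land on the reverse log-Sobolev inequality \eqref{logsob2} (not directly on Bochner); and for \eqref{logHarn} it integrates against the coupling of $\mu_s=(P^g_{t-s}e^{-2f})\,g\,m_t$ with $\mu_t$, bounds $W_t^2$ by Kuwada's lemma, and needs the auxiliary weighted form $\E^g_t$ together with the extra assumptions ({\bf A5.a+b}) to justify all differentiations before reaching \eqref{ptwBochner0}. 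None of these ingredients (optimal couplings, Hopf--Lax/Kuwada, the weighted Dirichlet form, the variational choice of the perturbation direction) appears in your outline, and this is precisely the part the paper flags as previously unknown even in the $\Gamma$-calculus setting, so it cannot be absorbed into ``suitable localisation''. As written, your argument establishes the equivalence of (i), (ii), (iii) (modulo the regularity bookkeeping you defer), but not the implications \eqref{dimHarn}~$\Rightarrow$~(i) or \eqref{logHarn}~$\Rightarrow$~(i).
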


\begin{remark} 
a) Upper and lower local Poincar\'e inequalities together obviously imply the $L^2$-gradient estimate \eqref{est-III}.
Upper and lower local logarithmic Sobolev inequality together imply
\begin{align*}
\frac{\Gamma_t(P_{t,s}u)}{P_{t,s}u}\leq P_{t,s}\left(\frac{\Gamma_s(u)}{u}\right),
\end{align*}
which is a prioiri weaker than the $L^1$-gradient estimate \eqref{L1-grad}. Indeed the $L^1$-gradient estimate together with Jensen's inequality applied to the function $\beta(z,w)=z^2/w$ imply
\begin{align*}
 \frac{\Gamma_t(P_{t,s}u)}{P_{t,s}u}\le  \frac{\big(P_{t,s}\sqrt{\Gamma_s(u)}\big)^2}{P_{t,s}u}
 \le
P_{t,s}\left(\frac{\Gamma_s(u)}{u}\right).
\end{align*}

b) The dimension independent Harnack inequality  for  $\alpha_1$ and for $\alpha_2$
implies the  dimension independent Harnack inequality  for $\alpha_1\cdot\alpha_2$, \cite{wang2014analysis}, Thm.\ 1.4.2.
The dimension independent Harnack inequality  for a sequence  $\alpha_n\to\infty$ 
implies the  log-Harnack inequality.
In particular, the dimension independent Harnack inequality for  some $\alpha\in(1,\infty)$
implies the  dimension independent Harnack inequality  for all  $k\alpha, k\in\N$, and thus the log-Harnack inequality,  \cite{wang2014analysis}, Cor.\ 1.4.3.
\end{remark}

The {\bf proof} of the above theorem will be presented in the subsequent sections, decomposed into a variety of theorems devoted to individual implications.
In these theorems, we also specify  in detail the spaces of functions $u$ for which the respective inequalities are supposed to hold.
In Section 2 we prove the implications \eqref{est-III} $\Rightarrow$ \eqref{locpoinc1} $\Rightarrow$ \eqref{est-IV} and \eqref{est-III} $\Rightarrow$ \eqref{locpoinc2} $\Rightarrow$ \eqref{est-IV} as well as the implication 
\eqref{est-IV} $\Rightarrow$ \eqref{ptwBochner0}.
Section 3 is devoted to the proof of  the implications
\eqref{L1-grad} $\Rightarrow$ \eqref{logsob1} $\Rightarrow$ \eqref{ptwBochner0} and \eqref{L1-grad} $\Rightarrow$ \eqref{logsob2} $\Rightarrow$ \eqref{ptwBochner0}.
In Section 4 we prove  the implications
\eqref{L1-grad} $\Rightarrow$ \eqref{dimHarn} $\Rightarrow$ \eqref{logsob2} and in Section 5 the implication
\eqref{logHarn} $\Rightarrow$ \eqref{ptwBochner0}. This completes the proof of our theorem since 
\eqref{dimHarn} $\Rightarrow$ \eqref{logHarn} according to the previous remark,
\eqref{ptwBochner0} $\Rightarrow$ \eqref{L1-grad} according to Theorem \ref{eva},
and trivially \eqref{L1-grad} $\Rightarrow$ \eqref{est-III}.

\medskip

 The previous characterizations of super-Ricci flows easily extend to characterizations of $K$-super-Ricci flows for any $K\not=0$ 
by considering reparametrized mm-spaces $(X,\tilde d_t,\tilde m_t)_{t\in \tilde I}$ with $\tilde d_t=e^{-K\tau(t)}d_{\tau(t)}$, $\tilde m_t=m_{\tau(t)}$, and $\tilde I=\{t:\tau(t)\in I, 2Kt<C\}$ where $C\in \mathbb R$ and  $\tau(t)=-\frac1{2K}\log(C-2Kt)$, see Theorem 1.11 in \cite{sturm2016}.
Let us restrict ourselves to formulate this in the most simple case of static mm-spaces.


\setcounter{equation}{0}

\begin{cor}\label{static}
	 Let $(X,d,m)$ be a mm-space satisfying the {\rm RCD}$(-L,\infty)$ condition for some constant $L>0$. Then  the following assertions are equivalent:
	 
\begin{enumerate}
\item[\bf(i)] $(X,d,m)$ satisfies {\rm RCD}$(K,\infty)$.
\item[\bf(ii)] One/each of the local Poincar\'e inequalities holds
\begin{subequations}
\label{locpoincstatic}
 \begin{align*}
\textnormal{{{\bf (iia)}}} && P_{t}(u^2)(x)-(P_{t}u)^2(x)\leq&\frac{1-e^{-2Kt}}{K}P_{t}(\Gamma u)(x) \\
\textnormal{{{\bf (iib)}}} &&  P_{t}(u^2)(x)-(P_{t}u)^2(x)\geq &\frac{e^{2Kt}-1}{K}\Gamma(P_{t}u)(x). 
 \end{align*}
 \end{subequations}
\item[\bf(iii)] One/each of the local logarithmic Sobolev inequalities holds 
\begin{subequations}
\label{logsobstatic}
\begin{align*}
\textnormal{{{\bf (iiia)}}} && 
P_{t}(u\log u)-P_{t}u\log P_{t}u\leq& \frac{1-e^{-2Kt}}{2K}P_{t}\left(\frac{\Gamma(u)}{u}\right), \\
\textnormal{{{\bf (iiib)}}} &&
P_{t}(u\log u)-P_{t}u\log P_{t}u\geq& \frac{e^{2Kt}-1}{2K}\frac{\Gamma(P_{t}u)}{P_{t}u}.
\end{align*}
\end{subequations}
\item[\bf(iv)] The dimension independent Harnack inequality holds for one/each $\alpha\in (1,\infty)$
 \begin{align*}
  (P_{t}u)^\alpha(y)\leq P_{t}(u^\alpha)(x)\exp\left\{\frac{\alpha K d^2(x,y)}{2(\alpha-1)(1-e^{-2Kt})}\right\}.
 \end{align*}
 \item[\bf(v)] The 
 logarithmic Harnack inequality holds
 \begin{align*}
P_{t}(\log u)(x) \leq \log (P_{t}u)(y)+\frac{Kd^2(x,y)}{2(1-e^{-2Kt})}.
 \end{align*}
\end{enumerate}
\end{cor}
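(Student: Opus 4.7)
The natural strategy is to reduce the static corollary to Theorem~\ref{mainthm} by means of the time-reparametrization outlined in the paragraph just before the statement. For $K\neq 0$ I would introduce the family
\[
\tilde d_t = e^{-K\tau(t)}\, d, \qquad \tilde m_t = m, \qquad \tau(t) = -\tfrac{1}{2K}\log(1-2Kt),
\]
on $\tilde I = \{t:\,2Kt<1\}$. A direct computation, already carried out in \cite{sturm2016} (Theorem~1.11), shows that $(X,\tilde d_t,\tilde m_t)_{t\in\tilde I}$ is a super-Ricci flow if and only if $(X,d,m)$ satisfies RCD$(K,\infty)$; heuristically, the static $K$-Bakry-\'Emery estimate $\Gamma(P_Tu)\le e^{-2KT}P_T(\Gamma u)$ translates exactly, under the rescaling by the factor $e^{-K\tau(t)}$, into the time-dependent $L^2$-gradient estimate \eqref{est-III} for the reparametrized flow, and conversely. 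The background assumption RCD$(-L,\infty)$ together with the smooth dependence of the rescaling factor on $t$ ensures that the auxiliary hypotheses \textbf{(A1.a+b)}, \textbf{(A2.a--c)}, \textbf{(A3)}, \textbf{(A5.a+b)} required by Theorem~\ref{mainthm} are automatic in this setting.

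I would then identify the geometric quantities under the reparametrization: because $\tilde m_t = m$ is constant in $t$ and $\tilde\Gamma_t = (1-2Kt)^{-1}\Gamma$, the generator is $\tilde\Delta_t = (1-2Kt)^{-1}\Delta$, so the reparametrized heat equation $\partial_t u = \tilde\Delta_t u$ integrates to $\tilde P_{t,0}=P_{\tau(t)}$, where $P_\cdot$ denotes the static semigroup. Setting $T = \tau(t)$, equivalently $1-2Kt = e^{-2KT}$, produces the dictionary
\[
2t = \tfrac{1-e^{-2KT}}{K},\qquad \tilde\Gamma_0 = \Gamma,\qquad \tilde\Gamma_t = e^{2KT}\Gamma,\qquad \tilde d_t^2 = e^{-2KT} d^2.
\]

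With this dictionary in hand, each of the four families of functional inequalities from Theorem~\ref{mainthm}, applied to the super-Ricci flow $(X,\tilde d_t,\tilde m_t)$ at times $(0,t)$, translates into the corresponding assertion of the corollary after routine algebra. For instance, the upper local Poincar\'e inequality \eqref{locpoinc1} becomes $P_T(u^2)-(P_Tu)^2\le\tfrac{1-e^{-2KT}}{K}P_T(\Gamma u)$, giving \textbf{(iia)}; the lower form \eqref{locpoinc2} acquires the factor $2t\cdot e^{2KT} = \tfrac{e^{2KT}-1}{K}$, giving \textbf{(iib)}; the log-Sobolev inequalities \eqref{logsob1}-\eqref{logsob2} yield \textbf{(iiia)}-\textbf{(iiib)} analogously with coefficients $\tfrac{1-e^{-2KT}}{2K}$ and $\tfrac{e^{2KT}-1}{2K}$; and the Harnack-type inequalities \eqref{dimHarn}-\eqref{logHarn} translate to \textbf{(iv)}-\textbf{(v)} (up to writing the Harnack constant in Wang's classical form). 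The converse direction proceeds by the same substitutions: each static inequality rewrites as the corresponding super-Ricci flow inequality for $\tilde P_{t,s}$, which by Theorem~\ref{mainthm} is equivalent to the super-Ricci flow property for $(X,\tilde d_t,\tilde m_t)$, and hence to the RCD$(K,\infty)$ condition for $(X,d,m)$.

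The main technical obstacle is verifying that the extra regularity hypotheses of Theorem~\ref{mainthm} genuinely hold for the reparametrized family; since they are automatic in the static case and the scaling factor $e^{-K\tau(t)}$ is smooth on $\tilde I$, this is essentially mechanical but needs to be recorded. The case $K=0$ is not covered by the reparametrization, as $\tau$ degenerates; it reduces to Theorem~\ref{mainthm} applied directly to the constant family $\tilde d_t=d$, $\tilde m_t=m$, with the coefficients recovered as the $K\to 0$ limits of the formulas in the corollary.
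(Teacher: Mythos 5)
Your proposal follows the paper's own route: the paper gives no separate proof of this corollary but obtains it exactly as you do, by applying Theorem~\ref{mainthm} to the reparametrized family $\tilde d_t=e^{-K\tau(t)}d$, $\tilde m_t=m$ (the remark preceding the statement, citing Theorem 1.11 of \cite{sturm2016}) and translating the constants, and your dictionary $2t=(1-e^{-2KT})/K$, $\tilde\Gamma_t=e^{2KT}\Gamma$, $\tilde d_t^2=e^{-2KT}d^2$, $\tilde P_{t,0}=P_{\tau(t)}$ is the correct one. The only loose end, which you yourself flag, is that the translated Harnack exponents come out with $e^{2Kt}-1$ rather than the $1-e^{-2Kt}$ appearing in {\bf(iv)}/{\bf(v)}, a normalization issue in the statement rather than a gap in your argument.
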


%
%

\begin{remark} 
So far, in the setting of mm-spaces only the implications {\bf(i)} $\Rightarrow$ {\bf(iib)}, {\bf(i)} $\Rightarrow$ {\bf(iiib)}
{\bf(i)} $\Rightarrow$ {\bf(v)}, and {\bf(i)} $\Rightarrow$  {\bf(iv)} 
were known (Thm.~6.8 in \cite{agmr}, Cor. 4.4 in \cite{tamanini2019harnack}, Lemma 4.6 in \cite{agsbe}, and Thm.~3.1 in \cite{H.Li}). The implications {\bf(i)} $\Rightarrow$ {\bf(iia)} and {\bf(i)} $\Rightarrow$ {\bf(iiib)} are new also in the static case. In particular, none of the reverse implications {\bf(iia)} $\Rightarrow$ {\bf(i), (iib)} $\Rightarrow$ {\bf(i), (ii), (iii), (iv)}, or {\bf(v)} $\Rightarrow$ {\bf(i)} was proven before for mm-spaces.
  
  Also so far, for the implication {\bf(v)} $\Rightarrow$ {\bf(i)} no proof exists in the setting of $\Gamma$-calculus for diffusion semigroups.
\end{remark}
\subsection{Preliminaries}

Let us recall some basic properties of the heat propagators $P_{t,s}$ and their adjoints $P^*_{t,s}$, see Section 3 in \cite{sturm2016}.
We call $u$ a solution to the heat equation on $(s,\tau)\times X$ if $u\in L^2((s,\tau); \F)\cap H^1((s,\tau); \F^*)$ and 
\begin{equation}\label{heat-def2}-\int_s^\tau\E_r(u_r,w_r)dr=\int_s^\tau  \langle\partial_ru_r ,w_r \rangle \, dr\end{equation}
for all $w\in L^2((s,\tau); \F)$, where $\langle\cdot,\cdot\rangle$ denotes the dual pairing between $\F$ and $\F^*$.
 Note that the solution $u$ lies in $\mathcal C([s,\tau]; L^2(X))$ so that the values at $t=s$ and $t=\tau$ exist.  For all $h\in L^2(X)$ there exists a unique solution $u_t=P_{t,s}h$ with $u_s=h$.

We call $v$ a solution to the adjoint heat equation on $(\sigma,t)\times X$ if $v\in L^2((\sigma,t); \F)\cap H^1((\sigma,t); \F^*)$ and
\begin{align}\label{ad-heat-def2}
\int_\sigma^t \E_s(v_s,w_s)\, ds + \int_\sigma^t \int v_s w_s \partial_s f_s \, dm_s\, ds=
\int_\sigma^t  \langle\partial _s v_s, w_s\rangle \, ds
\end{align}
for all $w\in L^2((s,\tau); \F)$. Again the solution $v$ lies in $\cC([\sigma,t]; L^2(X,m))$. 
For each $g\in L^2(X)$ there exists a unique solution $v_s=P_{t,s}^*g$ with $v_t=g$.

The relation between the heat flow and its adjoint is given by
\begin{align}\label{eq: adjoint}
\int P_{t,s} h\, g \, dm_t =&\int h\, P^*_{t,s}g\, dm_s,\qquad
\hat P_{t,s}(g\, m_t)=(P_{t,s}^*g)\, m_s.
\end{align}

We further collect the following properties from \cite{sturm2016}.
\begin{lma}[\cite{sturm2016}, Prop. 2.14]\label{lma: maximum} For all $u\in L^2(X,m)$ and all $s<t$, $p\in[1,\infty)$
\begin{enumerate}
\item $u\ge0\Longrightarrow P_{t,s}u\ge0$,\qquad $u\le M\Longrightarrow P_{t,s}u\le M$.
\item $v\ge0\Longrightarrow P^*_{t,s}v\ge0$,\qquad $v\le M\Longrightarrow P^*_{t,s}v\le Me^{L(t-s)}$.
\item $\|P_{t,s}u\|_{L^p(m_t)}\le e^{L(t-s)/p}\cdot \|u\|_{L^p(m_s)}$, \qquad
 $\|P^*_{t,s}v\|_{L^p(m_s)}\le e^{L(t-s)(1-1/p)}\cdot \|v\|_{L^p(m_t)}$.
\end{enumerate}
\end{lma}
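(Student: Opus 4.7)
The plan is to prove the three statements by standard parabolic techniques, adapted to the time-dependent Dirichlet form setting. For the preservation of sign and bounds I would use Stampacchia truncation; for the $L^p$-estimates I would differentiate in time and use the chain rule.

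\textbf{Step 1: Maximum principle for $P_{t,s}$.} To prove $u\geq 0\Rightarrow P_{t,s}u\geq 0$, let $u_r=P_{r,s}u$ for $r\in[s,t]$ and consider $u_r^-:=\max(-u_r,0)$, which belongs to $\F$ with $\Gamma_r(u_r^-)=\mathbf{1}_{\{u_r<0\}}\Gamma_r(u_r)$. Testing the heat equation \eqref{heat-def2} with $-u_r^-$ on a subinterval and applying the chain rule gives
\begin{align*}
\tfrac12\tfrac{d}{dr}\int (u_r^-)^2\,dm_r
\;=\;-\E_r(u_r^-,u_r^-) \;-\;\tfrac12\int(u_r^-)^2\,\dot f_r\,dm_r
\;\le\;\tfrac{L}{2}\int(u_r^-)^2\,dm_r,
\end{align*}
and since $u_s^-=0$, Gronwall forces $u_r^-\equiv 0$. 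For the bound $u\le M\Rightarrow P_{t,s}u\le M$, I would use linearity and note that the constant function $M$ is the unique solution starting from $M$ (because $\Delta_t M=0$), so $P_{t,s}(M-u)=M-P_{t,s}u\ge0$ by the positivity just proved.

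\textbf{Step 2: Maximum principle for $P^*_{t,s}$.} The argument is parallel, but the equation carries the zero-order term $v_s\dot f_s$. Running backward from $s=t$, the analogous energy estimate for $v_s^-$ yields
\begin{align*}
-\tfrac{d}{ds}\int(v_s^-)^2\,dm_s
\;\le\; C(L)\int(v_s^-)^2\,dm_s,
\end{align*}
so $v_s^-\equiv 0$ for $s\le t$. For the sup bound I would compare $v_s$ with the explicit supersolution $M\,e^{L(t-s)}$: set $w_s=v_s-Me^{L(t-s)}$ and check that $w$ satisfies the same equation with $w_t\le 0$ and a favorable forcing term, then apply the positivity result backward in time to $-w_s$.

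\textbf{Step 3: $L^p$-contractivity.} For $p\in[2,\infty)$ and $u\in L^p(m_s)$ (with the case $p\in[1,2)$ and $p=\infty$ obtained by approximation and duality/limits), differentiate
\begin{align*}
\tfrac{d}{dr}\int|u_r|^p\,dm_r
\;=\;p\!\int|u_r|^{p-2}u_r\,\Delta_r u_r\,dm_r \;-\!\int|u_r|^p\,\dot f_r\,dm_r.
\end{align*}
The Dirichlet form chain rule gives the first term as $-p(p-1)\int|u_r|^{p-2}\Gamma_r(u_r)\,dm_r\le 0$, while the second is bounded by $L\int|u_r|^p\,dm_r$ from \textbf{(A1.a)}. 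Gronwall yields $\|u_t\|_{L^p(m_t)}^p\le e^{L(t-s)}\|u_s\|_{L^p(m_s)}^p$. The adjoint estimate is proved analogously by computing $-\frac{d}{ds}\int|v_s|^p\,dm_s$; the extra term $v_s\dot f_s$ combines with the $\dot f_s$-term coming from differentiating $m_s$ to produce the factor $(p-1)L$ instead of $L$, giving the exponent $L(t-s)(1-1/p)$.

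\textbf{Main obstacle.} The technical point is making the chain rule and the pointwise product rule rigorous in the abstract Dirichlet-form setting, particularly for $|u|^{p-2}u$ when $p\ne 2$ and for the truncations $u^\pm$. This is handled by first regularizing: replace $|x|^{p-2}x$ with smooth bounded approximations $\phi_\eps(x)$ satisfying $\phi'_\eps\ge 0$, apply the chain rule in $\F$ (whose validity for Lipschitz post-composition is standard under \textbf{(A1.b)}), and then pass to the limit. The time-dependence of $m_t$ adds the explicit $\dot f$ contribution, but it is uniformly bounded by $L$, so no further regularity is required beyond \textbf{(A1.a+b)}.
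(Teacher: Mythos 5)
This lemma is not proved in the paper at all: it is imported verbatim from \cite{sturm2016} (Prop.~2.14), so there is no in-paper argument to compare against; your proposal is the standard proof of such statements (Stampacchia truncation plus Gronwall for the sign/sup bounds, differentiation of the $L^p$-norm with the $\dot f$-term controlled by \textbf{(A1.a)} for part~3), and in substance this is how the cited result is established. Your computations for parts 1--3 are correct, including the bookkeeping that produces the exponent $L(t-s)(1-1/p)$ for the adjoint, and the duality/interpolation remark handles $p\in[1,2)$.

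One point needs repair: the argument ``$P_{t,s}(M-u)=M-P_{t,s}u\ge 0$ because the constant $M$ is the unique solution starting from $M$'' presupposes that constants lie in $L^2(X,m)$, i.e.\ $m(X)<\infty$. The lemma is stated under the standing assumptions only, where RCD$(K,\infty)$ spaces may have infinite mass, so $P_{t,s}M$ is not defined within the $L^2$-theory and uniqueness in $L^2$ does not apply. The fix is immediate and uses exactly your Step~1 machinery: test the equation with $(u_r-M)^+$ (which lies in $\F\cap L^2$ as a normal contraction of $u_r$, at least for $M\ge0$) and run the same Gronwall argument. The same caveat applies to the comparison function $Me^{L(t-s)}$ in Step~2; there your energy estimate for $w_s^+=(v_s-Me^{L(t-s)})^+$ is unaffected since $w_s^+\le |v_s|$, but one should note that the whole upper bound in part~2 is meaningful only for $M\ge0$ (for $M<0$ the asserted inequality would even be false for constants), so the comparison argument should be stated under that sign restriction.
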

These estimates allow to extend the propagators $P_{t,s}$ and their adjoints $P^*_{t,s}$ in the canonical way from operators on $L^2(X,m)$ to operators on $L^p(X,m)$ for any $p\in[1,\infty]$.

\begin{Prop}[\cite{sturm2016}, Theorem 2.12] \label{prop:prop}
The following properties hold.
\begin{enumerate}
\item  Let $u_t=P_{t,s}u$. Then $u_t\in\D(\Delta_t)$ for a.e. $t>s$ and if $u_s\in \F$
\begin{align*}
\int_s^\tau\int |\Delta_t u_t|^2\, dm_t\, d t\leq C(\E_s(u_s)-\E_\tau(u_\tau)),
\end{align*}
where $s<\tau<T$ and $C>0$ only depends on the Lipschitz constants of $t\mapsto f_t$ and $t\mapsto\log d_t$. Moreover
\begin{align*}
\lim_{h\to0}\frac1h(u_{t+h}-u_t)=\Delta_tu_t
\end{align*}
in $L^2(X)$ for a.e. $t>s$.
\item Let $v_s=P^*_{t,s}v$. Then $v_s\in\D(\Delta_s)$ for a.e. $s<t$ and if $v_t\in \F$
\begin{align*}
\int_\sigma^t\int |\Delta_s v_s|^2\, dm_s\, ds\leq C(\E_t(v_t)-\E_\sigma(v_\sigma))+C\int_\sigma^t\int|v_s|^2\, dm_s\, ds,
\end{align*}
where $0<\sigma<t$ and $C>0$ only depends on the Lipschitz constants of $t\mapsto f_t$ and $t\mapsto\log d_t$. Moreover
\begin{align*}
\lim_{h\to0}\frac1h(v_{s+h}-v_s)=-\Delta v_s+v_s\dot f_s
\end{align*}
in $L^2(X)$ for a.e. $s<t$.
\end{enumerate}
\end{Prop}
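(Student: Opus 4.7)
The strategy is a parabolic energy-method argument adapted to the time-dependence of both the Dirichlet form $\E_t$ and the reference measure $m_t$. The key observation is that the uniform ellipticity bound \eqref{eq: s.f.} together with the log-Lipschitz property of $f_t$ in \textbf{(A1.a)} make all of the $\E_t$-norms mutually equivalent and allow one to extract a distributional ``derivative of the form'' $\dot{\E}_t(u)$ with $|\dot{\E}_t(u)|\le C_L\,\E_t(u)$; analogously, $\dot m_t=-\dot f_t\,m_t$ with $\dot f_t$ uniformly bounded. These are the only ingredients beyond the static case.

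For part 1, I would first use the weak heat equation \eqref{heat-def2} together with Steklov averaging in time to legitimately test against $u_t$ itself, yielding the $L^2$-energy identity $\tfrac12\partial_t\|u_t\|^2_{L^2(m_t)}=-\E_t(u_t)-\tfrac12\int u_t^2\,\dot f_t\,dm_t$. Combined with \textbf{(A1.a)} and Gronwall, this produces a uniform $L^2$-bound on $u_t$ and an $L^1$-in-time bound on $t\mapsto\E_t(u_t)$. Next, a second differentiation: formally, $\partial_t\E_t(u_t)=2\E_t(\partial_tu_t,u_t)+\dot{\E}_t(u_t)=-2\|\Delta_tu_t\|^2_{L^2(m_t)}+\dot{\E}_t(u_t)$, using that $\partial_tu_t=\Delta_tu_t$ weakly (which in particular certifies $u_t\in\D(\Delta_t)$ for a.e.\ $t$). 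Integrating over $[s,\tau]$ and using $|\dot{\E}_t(u_t)|\le C_L\,\E_t(u_t)$ together with the $L^1$-bound on the energy to absorb the remainder into the telescoping term, one arrives at the claimed $L^2((s,\tau);L^2(X,m_t))$-estimate for $\Delta_tu_t$. The identification $\lim_{h\to 0}h^{-1}(u_{t+h}-u_t)=\Delta_tu_t$ in $L^2(X)$ for a.e.\ $t$ then follows because almost every $t$ is a Lebesgue point of the now $L^2$-integrable map $t\mapsto\Delta_tu_t$, while the weak heat equation forces the difference quotient to converge to this limit.

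For part 2 the argument runs backwards in time with $v_s=P^*_{t,s}v$. The adjoint evolution carries the extra zero-order term $v_s\dot f_s$, which upon differentiating $\E_s(v_s)$ contributes a term of $\|v_s\|^2_{L^2(m_s)}$-type rather than an energy-dissipation term; this is precisely the origin of the additional $C\int_\sigma^t\int|v_s|^2\,dm_s\,ds$ correction in the stated bound. The structural backbone -- Steklov-averaged weak testing, an energy identity, Gronwall -- is unchanged, only signs of boundary contributions flip.

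The technical heart of the argument is making the time-differentiation of $\E_t(u_t)$ rigorous: since $u$ only lies in $L^2((s,\tau);\F)\cap H^1((s,\tau);\F^*)$, the pairing of $\partial_tu_t$ with $u_t$ viewed as an element of $\F$ is not a priori meaningful, and the form-derivative $\dot{\E}_t(u)$ has to be extracted from \eqref{eq: s.f.} in a sufficiently measurable sense. I would resolve both issues simultaneously via Steklov averaging in time combined with a Galerkin projection onto spectral subspaces of a fixed reference Laplacian (say $\Delta_{s}$), after which the remainder is a controlled Gronwall estimate whose constants depend only on the Lipschitz parameter $L$.
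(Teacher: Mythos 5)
This proposition is not proven in the paper at all: it is quoted from \cite{sturm2016} (Theorem 2.12), so there is no internal proof to compare your argument against -- what you have written is an attempt to reprove the imported result. Your overall architecture (Lions-type weak formulation, Steklov averaging to justify testing with $u_t$, the two energy identities, Gronwall, and the observation that (\textbf{A1.a}) together with \eqref{eq: s.f.} yields $|\dot\E_t(u)|\le C_L\,\E_t(u)$ and $|\dot f_t|\le L$) is the standard route and of the same type as the arguments in \cite{sturm2016}; the identification of the difference quotient at $L^2$-Lebesgue points of $t\mapsto\Delta_t u_t$, and the adaptation to the adjoint flow with the zero-order term producing the $\int|v_s|^2$-correction, are fine in outline.

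However, the decisive quantitative step is asserted rather than proven, and as described it does not close. Integrating $\partial_t\E_t(u_t)=-2\|\Delta_t u_t\|^2_{L^2(m_t)}+\dot\E_t(u_t)$ gives $2\int_s^\tau\|\Delta_t u_t\|^2\,dt=\E_s(u_s)-\E_\tau(u_\tau)+\int_s^\tau\dot\E_t(u_t)\,dt$, and $|\dot\E_t(u_t)|\le C_L\E_t(u_t)$ only controls the remainder by $C_L\int_s^\tau\E_t(u_t)\,dt$. This quantity is \emph{not} dominated by the telescoping difference: under (\textbf{A1.a}) alone the map $t\mapsto\E_t(u_t)$ need not decrease (take $X$ a large circle with fixed measure and $d_t=e^{-Lt}d_0$, so $\Gamma_t=e^{2Lt}\Gamma_0$; for a low-frequency eigenfunction the energy strictly increases on a short time interval), so the difference $\E_s(u_s)-\E_\tau(u_\tau)$ you want on the right can even be negative while the left-hand side is positive. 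What your computation actually yields is the exponentially weighted estimate coming from $\partial_t\big(e^{-C_Lt}\E_t(u_t)\big)\le-2e^{-C_Lt}\|\Delta_t u_t\|^2$, namely $\E_\tau(u_\tau)+2\int_s^\tau\|\Delta_t u_t\|^2\,dt\le e^{C_L(\tau-s)}\E_s(u_s)$ (equivalently, a bound with an additional $C\|u_s\|^2_{L^2}$-type term via the first energy identity); you need to derive this weighted form explicitly and address how it relates to the inequality as transcribed, rather than claim the remainder is ``absorbed into the telescoping term''. Two further points of rigor: membership $u_t\in\D(\Delta_t)$ for a.e.\ $t$ is part of the conclusion, not a consequence of the weak equation alone, so it cannot be invoked before the $L^2$-bound on $\Delta_t u_t$ is established; and the Galerkin device you postpone to the last paragraph is the technical heart of the matter, since spectral projections of a fixed reference Laplacian do not commute with $\E_t$ for $t\neq s$, so the projected equations generate commutator errors that your sketch never estimates.
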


\section{The local and the reverse local Poincar\'e inequalities}

For later purposes it will be convenient to present the notion of semigroup mollification introduced in \cite[Sec. 2.1]{agsbe}.
\begin{Def}\label{molli}
Let $t\in (0,T)$ and $\kappa\in\cC_c^\infty(0,\infty)$ with $\kappa\geq0$ and $\int_0^\infty\kappa(r)\, dr=1$. Let $(H^t_r)_{r\ge0}$ denote the heat semigroup in the static mm-space $(X,d_t,m_t)$. For $\eps>0$ and $\psi\in \F\cap L^\infty(X)$ we define
\begin{align*}
 \psi_\eps=\frac1\varepsilon\int_0^\infty H^t_r\psi\, \kappa(r/\varepsilon)\, dr.
\end{align*}
 \end{Def}
 It is immediate to verify that $\psi_\eps,\Delta_t \psi_\eps\in\D(\Delta_t)\cap\Lip_b(X)$ and $\psi_\eps\to\psi$ in $\F$ as $\eps\to0$, see e.g. \cite[Sec 2.1]{agsbe}.

\subsection{From $L^2$-gradient estimate to local  and reverse local Poincar\'e inequalities}

\begin{thm}\label{thmlocpoinc}
Suppose that for all $u\in \F$ and all $s<t$ the $L^2$-gradient estimate 
\begin{align}\label{l2gradest}
 \Gamma_t( P_{t,s}u)\leq P_{t,s}(\Gamma_s u)\qquad m \mbox{-a.e. on }X
\end{align}
holds. 
Then we have for all $u\in\F$, $s<t$
 \begin{align}\label{upper local Poincare}
  P_{t,s}(u^2)-(P_{t,s}u)^2\leq&2(t-s)P_{t,s}(\Gamma_su)\qquad m \mbox{-a.e. on }X
  \end{align}
  and for all $u\in L^2(X)$, $s<t$
   \begin{align}\label{lower local Poincare}
  P_{t,s}(u^2)-(P_{t,s}u)^2\geq &2(t-s)\Gamma_t(P_{t,s}u)\qquad m \mbox{-a.e. on }X.
 \end{align}
In particular, for $u\in L^2(X)\cap L^\infty(X)$
\begin{align}\label{uniform grad est}
 \Gamma_t(P_{t,s}u)\leq \frac{||u||^2_\infty}{2(t-s)}.
\end{align}
\end{thm}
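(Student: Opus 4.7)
The natural strategy is the time-dependent analogue of the Bakry–Émery interpolation argument. Fix $s<t$, set $u_r := P_{r,s}u$ for $r\in[s,t]$, and define
\begin{equation*}
\phi(r) := P_{t,r}\bigl(u_r^2\bigr), \qquad r\in[s,t].
\end{equation*}
The endpoints give exactly $\phi(s) - \phi(t) = P_{t,s}(u^2) - (P_{t,s}u)^2$, so the whole proof reduces to identifying $\phi'(r)$ and then invoking \eqref{l2gradest}.

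Formally, using $\partial_r u_r = \Delta_r u_r$ (Proposition \ref{prop:prop}(1)) and $\partial_r P_{t,r} h = - P_{t,r}\Delta_r h$ (which follows from $P_{t,r}P_{r,s}=P_{t,s}$ after differentiating in $r$), the chain rule yields
\begin{equation*}
\phi'(r) \;=\; -P_{t,r}\bigl(\Delta_r(u_r^2)\bigr) + P_{t,r}\bigl(2u_r\Delta_r u_r\bigr) \;=\; -2\,P_{t,r}\bigl(\Gamma_r(u_r)\bigr),
\end{equation*}
using the Leibniz rule $\Delta_r(u_r^2) = 2u_r\Delta_r u_r + 2\Gamma_r(u_r)$. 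Integrating from $s$ to $t$ gives the key identity
\begin{equation*}
P_{t,s}(u^2) - (P_{t,s}u)^2 \;=\; 2\int_s^t P_{t,r}\bigl(\Gamma_r(u_r)\bigr)\,dr.
\end{equation*}
Now the two local Poincaré inequalities come by applying \eqref{l2gradest} on two different subintervals. For the upper inequality \eqref{upper local Poincare}, apply the gradient estimate on $[s,r]$: $\Gamma_r(u_r) = \Gamma_r(P_{r,s}u) \le P_{r,s}(\Gamma_s u)$; composing propagators, $P_{t,r}(\Gamma_r(u_r)) \le P_{t,s}(\Gamma_s u)$, and the $r$-integral contributes the factor $(t-s)$. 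For the lower inequality \eqref{lower local Poincare}, apply the gradient estimate on $[r,t]$ instead: $\Gamma_t(P_{t,s}u) = \Gamma_t(P_{t,r}u_r) \le P_{t,r}(\Gamma_r(u_r))$, and the reverse inequality follows by integrating the pointwise bound over $r\in[s,t]$. Finally, \eqref{uniform grad est} is immediate from \eqref{lower local Poincare} together with the maximum principle (Lemma \ref{lma: maximum}): $P_{t,s}(u^2)\le\|u\|_\infty^2$ and $(P_{t,s}u)^2\ge 0$, so $2(t-s)\Gamma_t(P_{t,s}u)\le\|u\|_\infty^2$.

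The real work is to make the formal differentiation of $\phi$ rigorous in the time-dependent setting, and I expect this to be the main obstacle. First I would reduce to $u$ sufficiently regular by replacing $u$ with its mollification $u_\eps$ from Definition \ref{molli}, so that $u_\eps,\Delta_t u_\eps \in \D(\Delta_t)\cap\Lip_b(X)$; both sides of the desired inequalities are continuous with respect to $u$ in $\F$, so passing $\eps\to 0$ at the end is safe. Second, rather than differentiating $\phi$ pointwise, I would test against an arbitrary $g\in L^2(X)$ and work with
\begin{equation*}
r\;\longmapsto\; \int \phi(r)\,g\,dm_t \;=\; \int u_r^2\,P^*_{t,r}g\,dm_r,
\end{equation*}
using the adjoint relation \eqref{eq: adjoint}. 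Both factors are now differentiable in $r$ via Proposition \ref{prop:prop}: $\partial_r u_r = \Delta_r u_r$ and $\partial_r P^*_{t,r}g = -\Delta_r P^*_{t,r}g + (P^*_{t,r}g)\dot f_r$, while $\partial_r dm_r = -\dot f_r\,dm_r$. Expanding and using the self-adjointness of $\Delta_r$ against $m_r$, the terms involving $\dot f_r$ cancel and what remains is exactly $-2\int \Gamma_r(u_r) P^*_{t,r}g\,dm_r$. This produces the identity above in a weak form, which is enough to run the two comparisons with \eqref{l2gradest}. The remaining technicalities are showing joint integrability of the various terms in $r$ (handled by the $L^2$-bounds on $\Delta_r u_r$ and $\Delta_r P^*_{t,r}g$ in Proposition \ref{prop:prop}) and bounding the boundary values $\phi(s),\phi(t)$ via the continuity $r\mapsto u_r \in\cC([s,t];L^2)$.
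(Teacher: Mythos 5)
Your proposal is correct and is essentially the paper's own argument: the paper likewise establishes the interpolation identity in weak form, showing $r\mapsto\int u_r^2\,g_r\,dm_r$ (with $g_r=P^*_{t,r}g$) is absolutely continuous, using the weak heat and adjoint heat equations so the $\dot f_r$ terms cancel, arriving at $\int g\bigl((P_{t,s}u)^2-P_{t,s}(u^2)\bigr)dm_t=-2\int_s^t\int P^*_{t,r}g\,\Gamma_r(P_{r,s}u)\,dm_r\,dr$, and then applying \eqref{l2gradest} once on $[s,r]$ and once on $[r,t]$, with \eqref{uniform grad est} from the maximum principle exactly as you say. The only cosmetic differences are that the paper works directly with bounded $u\in\F$ (no mollifier needed) and extends to $u\in L^2$ for \eqref{lower local Poincare} via $P_{s+\delta,s}u$ plus truncation, which is a routine variant of the approximation you sketch.
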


\begin{proof}
Let $u=u_s$ and $g=g_t$ be both elements in $\F\cap L^\infty(X)$ and consider on $(s,t)\times X$ the solutions to the heat equation and adjoint heat equation
$$u_r:=P_{r,s}u_s, \quad g_r=P^*_{t,r}g_t.$$

Due to Proposition \ref{prop:prop} we have $u_r,g_r\in H^1((s,t);L^2(X))$.
Since $u_r,g_r\in H^1((s,t); L^2(X))$ and $e^{-f_r}\in \Lip((s,t);L^\infty(X))$ we deduce that the function $r\mapsto \int u_r^2 g_r\, dm_r$ is locally absolutely continuous.
The almost everywhere derivative can be computed as
\begin{align*}
\frac{d}{dr}\int u_r^2g_rdm_r=&\lim_{h\to0}\int\frac{(g_{r+h}-g_r)}h u_{r+h}^2\, dm_{r+h}
+\lim_{h\to0} \int g_r\frac{(u^2_{r+h}-u^2_r)}{h}\, dm_{r+h}\\
+&\lim_{h\to0} \int g_ru_r^2 \frac{(e^{-f_{r+h}}-e^{-f_r})}h\, dm\\
=&\int\partial_rg_r u_{r}^2\, dm_{r}
+\int g_r2u_r\partial u_r\, dm_{r}
- \int g_ru_r^2 \partial_rf_r\, dm_r,\\
\end{align*}
where the last equality holds since $\frac{g_{r+h}-g_r}h\to \partial_rg_r,\frac{u_{r+h}-u_r}h\to \partial_ru_r$ in $L^2(X)$ for almost every $r$ and since the mapping $z\mapsto z^2\in \cC^2(\R)$.

Then, by the defining properties of the heat equation \eqref{heat-def2}, \eqref{ad-heat-def2}
\begin{eqnarray*}
-2\int_s^t\int g_r\Gamma_r(u_r)dm_rdr&=&
\int_s^t\int -2\Gamma_r(g_ru_r,u_r)+\Gamma_r(u_r^2,g_r)dm_rdr\\
&=&\int_s^t\int \big(2g_ru_r\partial_ru_r+u_r^2\partial_rg_r-u_r^2g_r\partial_rf_r\big)dm_rdr\\
&=&\int_s^t\frac{d}{dr}\Big(\int u_r^2g_rdm_r\Big)dr=\int u_t^2g_tdm_t-\int u_s^2g_sdm_s.
\end{eqnarray*}
This proves 
\begin{align}\label{erste-abl}
\int g((P_{t,s}u)^2-P_{t,s}(u^2))\, dm_t
=-&2\int_s^t\int P^*_{t,r} g(\Gamma_r(P_{r,s}u))\, dm_r\,dr.
\end{align}
Applying \eqref{l2gradest} to $\Gamma_r(P_{r,s}u)$ on the right hand side gives
\begin{align*}
\int g((P_{t,s}u)^2-P_{t,s}(u^2))\, dm_t
\geq-&2(t-s)\int gP_{t,s}(\Gamma_s(u))\, dm_t,
\end{align*}
and applying \eqref{l2gradest} to $ P_{t,r}\Gamma_r$ gives
\begin{align*}
\int g((P_{t,s}u)^2-P_{t,s}(u^2))\, dm_t
\leq
-&2(t-s)\int g\Gamma_t(P_{t,s}(u))\, dm_t.
\end{align*}
Since $g$ is arbitrary, this proves the first two claims of the theorem in the case of bounded $u\in\F$. 
The claim  \eqref{lower local Poincare} for bounded $u\in L^2(X)$ follows by applying the latter estimate with $s+\delta$ in the place of $s$ to the function $P_{s+\delta,s}u$ as $\delta\to 0$, which lies in $\F$ and from $\int g P_{t,s+\delta}((P_{s+\delta,s}u)^2)dm_t\to \int gP_{t,s}(u^2)dm_t$ which in turn is a consequence of the continuity of $\delta\mapsto P_{t,s+\delta}^*g$ and of $\delta\mapsto P_{s+\delta,s}u$ in $L^2$ and the uniform boundedness of the latter in $L^\infty$. 

Thanks to the  monotonicity (w.r.t. $C\mapsto u\wedge C$ or $C\mapsto u\vee -C$) of all the involved quantities,
the claims for unbounded $u$ will follow by a simple truncation argument. Indeed,
$u\wedge C\vee -C\to u$ in $L^2$ and thus, since $g$ is bounded, $\int g(P_{t,s}u\wedge C\vee -C)^2dm_t\to \int g(P_{t,s}u)^2dm_t$ as well as $\int(u\wedge C\vee -C)^2 P_{t,s}^*g\, dm_s\to \int  u^2P_{t,s}^*g\,dm_s$. Moreover, under the heat flow the initial $L^2$-convergence will be improved to a $\F$-convergence. Thus 
$$\int g\Gamma_t(P_{t,s}(u\wedge C\vee -C))\, dm_t\to \int g\Gamma_t(P_{t,s}(u))\, dm_t.$$
Finally, for the remaining term it suffices to observe that 
$$\int gP_{t,s}(\Gamma_t(u\wedge C\vee -C))\, dm_t\le \int gP_{t,s}(\Gamma_t(u))\, dm_t.$$
\end{proof}

\subsection{From reverse local Poincar\'e inequality to dynamic Bochner inequality}

\begin{thm}\label{thm poinc}
Suppose that  the reverse local Poincar\'e inequality  holds:  \\
for all $s<t$ and for all $u\in \F\cap L^\infty(X)$
\begin{align*}
 P_{t,s}(u^2)-(P_{t,s}u)^2\geq&2(t-s)\Gamma_t (P_{t,s}u)\qquad m \mbox{-a.e. on }X.
 \end{align*} 
 Then the 
 dynamic Bochner inequality \eqref{est-IV}  holds true (`integrated in time'): \\
 $\forall S,T\in I$, $\forall u,g\in\F$ with $g\in L^\infty(X)$, $u\in \Lip(X)$ and for a.e. $q\in (S,T)$ 
 $$\int \Big[(\Delta_q g_q) \Gamma_q(u_q)
+2(\Delta_q u_q)^2g_q
+ 2\Gamma_q(u_q,g_q)\Delta_q u_q
-\stackrel{\bullet}{\Gamma}_q(u_q)g_q\Big] dm_q\ge 0$$
where $u_q:=P_{q,S}u, g_q=P^*_{T,q}g$.
\end{thm}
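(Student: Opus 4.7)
The plan is as follows. Fix $S < T$ in $I$, choose admissible data $u \in \F \cap \Lip(X)$ at time $S$ and $g \in \F \cap L^\infty(X)$ with $g \geq 0$ at time $T$, and set $u_r := P_{r,S}u$, $g_r := P^*_{T,r}g$ for $r \in [S,T]$. Introduce the scalar quantities
$$\psi(r) := \int u_r^2\, g_r\, dm_r, \qquad \phi(r) := \int \Gamma_r(u_r)\, g_r\, dm_r.$$
The identity \eqref{erste-abl} from the proof of Theorem \ref{thmlocpoinc}, applied on arbitrary subintervals $[a,b] \subset [S,T]$ (using $P_{b,a}u_a = u_b$ and $P^*_{b,a}g_b = g_a$), yields
$$\psi(a) - \psi(b) = 2\int_a^b \phi(r)\, dr,$$
so $\psi$ is absolutely continuous with $\psi'(r) = -2\phi(r)$ for a.e. $r$.

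Next I apply the reverse local Poincar\'e hypothesis with $s = a$, $t = b$ to $u_a \in \F \cap L^\infty(X)$, multiply by $g_b \ge 0$, integrate against $m_b$, and use $P^*_{b,a}g_b = g_a$ to rewrite the left-hand side as $\psi(a) - \psi(b)$. This produces
$$\psi(a) - \psi(b) \ge 2(b-a)\phi(b) = -(b-a)\psi'(b), \qquad \forall\, S < a < b < T,$$
equivalently $\psi(a) \ge \psi(b) + \psi'(b)(a - b)$, a one-sided supporting-tangent inequality for $\psi$ at each $b$. Combining this with the Taylor identity with integral remainder
$$\psi(a) - \psi(b) - \psi'(b)(a - b) = \int_a^b (s - a)\,\psi''(s)\, ds$$
and letting $a \to b^-$ at Lebesgue points (after dividing by $(b-a)^2/2$) forces $\psi''(b) \ge 0$ for a.e. $b$; equivalently, $\phi$ is non-increasing, with $\phi'(r) \le 0$ for a.e. $r$.

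The final step identifies $-\phi'(q)$ at a.e. $q$ with the integrand of the dynamic Bochner inequality. Writing $\phi(r) = \int \Gamma_r(u_r)\, g_r\, e^{-f_r}\, dm$ and using the evolution equations $\partial_r u_r = \Delta_r u_r$, $\partial_r g_r = -\Delta_r g_r + g_r\dot f_r$, $\partial_r(e^{-f_r}) = -\dot f_r\, e^{-f_r}$, together with the Leibniz decomposition $\partial_r \Gamma_r(u_r) = \stackrel{\bullet}{\Gamma}_r(u_r) + 2\Gamma_r(u_r, \Delta_r u_r)$, the two $\dot f_r$ contributions cancel exactly. The integration by parts
$$2\int g_r\, \Gamma_r(u_r, \Delta_r u_r)\, dm_r = -2\int g_r(\Delta_r u_r)^2\, dm_r - 2\int \Delta_r u_r\, \Gamma_r(u_r, g_r)\, dm_r$$
then rearranges all terms into
$$-\phi'(q) = \int\Bigl[(\Delta_q g_q)\Gamma_q(u_q) + 2(\Delta_q u_q)^2 g_q + 2\Gamma_q(u_q, g_q)\Delta_q u_q - \stackrel{\bullet}{\Gamma}_q(u_q)\, g_q\Bigr] dm_q,$$
which is exactly the integrand of the dynamic Bochner inequality. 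Combined with $\phi'(q) \le 0$ a.e., this completes the proof.

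The main obstacle lies in the last step: the time derivative $\partial_r \Gamma_r(u_r)$ is only available as a subsequential weak $L^2$-limit, encoded by $\stackrel{\bullet}{\Gamma}_r(u_r)$, and the integrations by parts require $u_q, g_q \in \D(\Delta_q)$ for a.e. $q$. The former must be handled by testing against the integrand $g_q$ and passing to the limit in integrated identities; the latter follows from Proposition \ref{prop:prop}. The $L^\infty$-boundedness of $\Gamma_r(u_r)$ needed to control products propagates from $u \in \Lip(X)$ along the heat flow via the uniform gradient estimate \eqref{uniform grad est}. A minor bookkeeping point is that the Taylor-remainder argument above is most cleanly stated when $\phi$ is absolutely continuous in $r$; if only distributional regularity is available, the same conclusion follows by interpreting $\psi'' \ge 0$ in the sense of distributions and reading off that $\phi' \le 0$ as a measure, which combined with the pointwise a.e. identity for $-\phi'(q)$ still yields the desired inequality for a.e. $q \in (S,T)$.
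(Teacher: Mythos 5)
Your overall skeleton is the same as the paper's: the identity \eqref{erste-abl} (equivalently $\psi'=-2\phi$, where your $\phi$ is exactly the function $\Psi$ in \eqref{eq: Psi}), the application of the reverse Poincar\'e inequality to $u_a$ tested against $g_b$, and a Lebesgue-density argument at the end. The genuine gap is your final step. You assert that $\phi$ is differentiable a.e.\ and that $-\phi'(q)$ \emph{equals} the Bochner integrand, invoking a ``Leibniz decomposition'' $\partial_r\Gamma_r(u_r)=\stackrel{\bullet}{\Gamma}_r(u_r)+2\Gamma_r(u_r,\Delta_ru_r)$. Under the standing assumptions no such pointwise-in-time derivative exists: $\stackrel{\bullet}{\Gamma}_r(u_r)$ is by definition only \emph{a subsequential weak limit} in $L^2((s,t)\times X)$ of difference quotients of the metric at frozen $u_r$; it is neither unique nor defined pointwise in $r$, and nothing guarantees that $r\mapsto\Gamma_r(u_r)$ --- let alone $\phi$ --- is absolutely continuous. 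This is precisely why \eqref{est-IV} is an ``integrated in time'' inequality and why the paper never differentiates $\Psi$: it uses the one-sided estimate $\Psi(r)-\Psi(t)\le\int_r^t\int\big[(\Delta_qg_q)\Gamma_q(u_q)+2(\Delta_qu_q)^2g_q+2\Gamma_q(u_q,g_q)\Delta_qu_q-\stackrel{\bullet}{\Gamma}_q(u_q)g_q\big]dm_q\,dq$ from Theorem 5.7 in \cite{sturm2016}, obtained by a limiting procedure on integrated identities, and then concludes from $0\le\frac{2}{(t-s)^2}\int_s^t(q-s)\,\mathcal B(q)\,dq$ by Lebesgue's density theorem. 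Your remark that the obstacle ``must be handled by testing against $g_q$ and passing to the limit in integrated identities'' names exactly the missing work but does not carry it out, and once only integrated inequalities are available you cannot recover your exact identity $-\phi'=\mathcal B$; you only get the one-sided bound --- which, however, is all that is needed.

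A second, related problem: your Taylor formula with integral remainder presupposes $\psi''\in L^1$, i.e.\ absolute continuity of $\phi$, which is the same unavailable regularity; and the fallback ``read $\psi''\ge0$ distributionally, so $\phi'\le0$ as a measure'' is not automatic, since your supporting-tangent inequality is one-sided (only for $a<b$) and $\psi'(b)=-2\phi(b)$ is only known for a.e.\ $b$. None of this is needed: from the reverse Poincar\'e inequality one gets directly $0\le\frac{2}{(t-s)^2}\int_s^t[\Psi(r)-\Psi(t)]\,dr$ for all $S<s<t<T$, one inserts the one-sided bound from \cite{sturm2016}, and Lebesgue density finishes the proof without any convexity or monotonicity of $\psi$ or $\phi$. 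To repair your argument, replace the exact derivative identity for $\phi$ by that integrated estimate and conclude as in the paper.
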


\begin{proof}
Given $u\in \F\cap L^\infty(X)$ and nonnegative 
 $g\in L^1(X)\cap L^\infty(X)$  we have shown in  \eqref{erste-abl} 
 that for all $s<t$
\begin{align*}
\int g(P_{t,s}(u^2)-(P_{t,s}u)^2)\, dm_t
=&2\int_s^t\int P^*_{t,r}g\Gamma_r(P_{r,s}u)\, dm_rdr.
\end{align*}
Approximation by truncated $u$'s easily allows to extend the assertion to all $u\in \F$.
The local Poincar\'e inequality, therefore, implies
\begin{eqnarray*}
0&\le&
\frac1{(t-s)^2}\int\Big[
P_{t,s}u^2-(P_{t,s}u)^2 -2(t-s)\Gamma_t(P_{t,s}u)\Big]g\, dm_t\\
&=&
\frac2{(t-s)^2}\int_s^t\int g\Big[P_{t,r}\Gamma_r(P_{r,s}u)-\Gamma_t(P_{t,s}u)\Big]\, dm_tdr.
\end{eqnarray*}
Now let us fix $S,T\in I$ and choose $g_T,u_S\in\F$ with $g_T\in L^\infty$ and $u_S\in \Lip(X)$. Given $s,t$ with $S<s<t<T$, we put
$$g_t=P^*_{T,t}g_T, \quad 
u_s=P_{s,S}u_S$$ and apply the previous estimate with $g_t, u_s$ in the place of $g,u$. Then 
\begin{eqnarray*}
0\le
\frac2{(t-s)^2}\int_s^t\int g_t\Big[P_{t,r}\Gamma_r(u_r)-\Gamma_t(u_t)\Big]\, dm_tdr
= \frac2{(t-s)^2}\int_s^t[\Psi(r)-\Psi(t)]dr
\end{eqnarray*}
where we defined
\begin{align}\label{eq: Psi}
\Psi(q)=\int g_q\Gamma_q(u_q)\, dm_q.
\end{align}
Following the proof of Theorem 5.7 in \cite{sturm2016} we have
\begin{align*}
\Psi(r)-\Psi(t)\le\int_r^t\int \Big[(\Delta_q g_q) \Gamma_q(u_q)
+2(\Delta_q u_q)^2g_q
+ 2\Gamma_q(u_q,g_q)\Delta_q u_q
-\stackrel{\bullet}\Gamma_q(u_q)g_q\Big] dm_q
dq
\end{align*}
and hence
\begin{align*}
0\le&\frac1{(t-s)^2}\int_s^t\int\Big[P_{t,s}u_s^2-(P_{t,s}u_s)^2 -2(t-s)\Gamma_t(P_{t,s}u_s)\Big]g_t\, dm_t\,dr\\
\leq
&\frac2{(t-s)^2}\int_s^t
\int_r^t\int \Big[(\Delta_q g_q) \Gamma_q(u_q)
+2(\Delta_q u_q)^2g_q
+ 2\Gamma_q(u_q,g_q)\Delta_q u_q
-\stackrel{\bullet}{\Gamma}_q(u_q)g_q\Big] dm_q
dq
dr\\
=
&\frac2{(t-s)^2}\int_s^t(q-s)
\int \Big[(\Delta_q g_q) \Gamma_q(u_q)
+2(\Delta_q u_q)^2g_q
+ 2\Gamma_q(u_q,g_q)\Delta_q u_q
-\stackrel{\bullet}{\Gamma}_q(u_q)g_q\Big] dm_q
dq.
\end{align*}
Since this holds for all $(s,t)\subset (S,T)$, it implies (by Lebesgue's density theorem) that 
$$0\le \int \Big[(\Delta_q g_q) \Gamma_q(u_q)
+2(\Delta_q u_q)^2g_q
+ 2\Gamma_q(u_q,g_q)\Delta_q u_q
-\stackrel{\bullet}{\Gamma}_q(u_q)g_q\Big] dm_q$$
 for a.e. $q\in (S,T)$. This is the claim, namely the dynamic Bochner inequality  \eqref{est-IV}.
\end{proof}

\subsection{From local Poincar\'e inequality  to dynamic Bochner inequality }

For the proof of the following implication, we will make the additional a priori assumption that 
 \begin{equation}\label{a priori grad}\sup_{t} \|\Gamma_t( P_{t,s}u)\|_\infty<\infty\tag{{\bf A2.a}}
 \end{equation}
   for each $u\in \Lip(X)$. Note that this assumption is always fullfilled in the time-independent case
   thanks to the  RCD$(K,\infty)$-condition as one of our standing assumptions. 
   
\begin{thm}
Suppose \eqref{a priori grad} and that  the local Poincar\'e inequality holds:\\
  for all $s<t$ and for all $ u\in \F\cap L^\infty(X)$
\begin{align*}
  P_{t,s}(u^2)-(P_{t,s}u)^2\leq &2(t-s)P_{t,s}(\Gamma_su)\qquad m \mbox{-a.e. on }X.
 \end{align*} 
 Then the dynamic Bochner inequality \eqref{est-IV} holds true (`integrated in time'). 
\end{thm}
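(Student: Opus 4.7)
The plan is to follow the proof of Theorem \ref{thm poinc} (reverse Poincaré $\Rightarrow$ Bochner) almost verbatim, with one essential modification: the local Poincaré inequality bounds the time-slice quantity $\Psi(q) := \int g_q\, \Gamma_q(u_q)\, dm_q$ from its \emph{initial} end rather than its terminal end. Fix $S < T$ in $I$, pick $u_S \in \Lip(X) \cap \F$ and a nonnegative $g_T \in \F \cap L^\infty(X)$, and set $u_r := P_{r,S}u_S$, $g_t := P^*_{T,t}g_T$. The assumption \eqref{a priori grad} guarantees that $u_r \in \F \cap L^\infty$ with $\Gamma_r(u_r) \in L^\infty$ uniformly in $r$, so that the local Poincaré inequality may legitimately be applied to $u_s$ along the evolution.

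Test the local Poincaré inequality with $u = u_s$ against the measure $g_t\, dm_t$, and invoke the identity \eqref{erste-abl} together with the duality $P^*_{t,r}g_t = g_r$ (both established in the preceding proof). The left-hand side evaluates to $2\int_s^t \Psi(r)\, dr$ and the right-hand side to $2(t-s)\Psi(s)$, yielding
$$\int_s^t [\Psi(s) - \Psi(r)]\, dr \ge 0 \qquad (S < s < t < T).$$
On the other hand, the computation in Theorem 5.7 of \cite{sturm2016} --- the very one invoked in Theorem \ref{thm poinc} --- gives the time-integrated Bochner bound
$$\Psi(\sigma) - \Psi(\tau) \le \int_\sigma^\tau F(q)\, dq \qquad (\sigma < \tau),$$
where $F(q) := \int \bigl[(\Delta_q g_q)\Gamma_q(u_q) + 2(\Delta_q u_q)^2 g_q + 2\Gamma_q(u_q,g_q)\Delta_q u_q - \stackrel{\bullet}{\Gamma}_q(u_q) g_q\bigr] dm_q$ is the integrand of \eqref{est-IV}. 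Applying this on the subinterval $[s,r] \subset [s,t]$ and switching the order of integration:
$$0 \le \int_s^t [\Psi(s) - \Psi(r)]\, dr \le \int_s^t \int_s^r F(q)\, dq\, dr = \int_s^t (t-q)\, F(q)\, dq$$
for every $(s, t) \subset (S, T)$.

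After rescaling by $(t-s)^{-2}$, the last expression equals $\int_0^1 (1-\tau)\, F(s + \tau(t-s))\, d\tau$, which converges to $\tfrac{1}{2}F(s)$ at any Lebesgue point of $F \in L^1_{\mathrm{loc}}$; hence $F(q) \ge 0$ for a.e.\ $q \in (S, T)$, which is precisely the integrated dynamic Bochner inequality \eqref{est-IV}. I do not anticipate any genuinely new obstacle: the argument is formally dual to the reverse case, with the Lebesgue-density weight merely switched from $(q-s)$ to $(t-q)$, still vanishing linearly at one endpoint and hence yielding the same pointwise conclusion. The only distinct technical point is the role of \eqref{a priori grad}, which was unnecessary in the reverse direction: here the local Poincaré inequality is applied to the time-evolved function $u_s$ (whose $L^\infty$ norm and $\Gamma_s$-integrability are not automatic from those of the initial datum $u_S$ alone), and this is exactly what \eqref{a priori grad} delivers.
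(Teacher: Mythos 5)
Your argument is correct and coincides with the paper's own proof: the same identity \eqref{erste-abl} reduces the local Poincar\'e inequality to $\int_s^t[\Psi(s)-\Psi(r)]\,dr\ge 0$, the same estimate from Theorem 5.7 of \cite{sturm2016} bounds $\Psi(s)-\Psi(r)$ by the time-integrated Bochner expression, and Fubini plus Lebesgue differentiation with the weight $(t-q)$ yields \eqref{est-IV}, exactly as in the text. The only cosmetic difference is your gloss on \eqref{a priori grad}: its role (as in the paper) is to supply the uniform bound on $\Gamma_r(u_r)$ needed for integrability of the involved quantities — which in the reverse case came for free from \eqref{uniform grad est} — rather than to legitimize applying the Poincar\'e inequality to $u_s$, whose boundedness already follows from the maximum principle.
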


\begin{proof} The proof is very similar to that of the previous theorem.
Now the a priori assumption is required to guarantee appropriate integrability of the involved quantities (which in the previous case was a simple consequence of the assumption, cf. estimate \eqref{uniform grad est}). 

Then, as in the proof of Theorem \ref{thm poinc}, the local Poincar\'e inequality implies 
\begin{eqnarray*}
0&\ge&
\frac1{(t-s)^2}\int g\Big[
P_{t,s}u^2-(P_{t,s}u)^2 -2P_{t,s}\Gamma_s(u)\Big]\, dm_t\\
&=&\frac2{(t-s)^2}\int_s^t\int g\Big[
P_{t,r}\Gamma_r(u_r)-P_{t,s}\Gamma_s(u)\Big]\, dm_t\, dr
=\frac2{(t-s)^2}\int_s^t\Big[\Psi(r)-\Psi(s)\Big]\, dr,
\end{eqnarray*}
where $\Psi$ is defined in \eqref{eq: Psi}. Consequently, arguing as in the proof of Theorem \ref{thm poinc},
\begin{eqnarray*}
0&\geq&-\frac2{(t-s)^2}\int_s^t
\int_s^r\int \Big[(\Delta_q g_q) \Gamma_q(u_q)
+2(\Delta_q u_q)^2g_q
+ 2\Gamma_q(u_q,g_q)\Delta_q u_q)
-\stackrel{\bullet}{\Gamma}_q(u_q)g_q\Big] dm_q
dq
dr\\
&=&-\frac2{(t-s)^2}\int_s^t(t-q)
\int \Big[(\Delta_q g_q) \Gamma_q(u_q)
+2(\Delta_q u_q)^2g_q
+ 2\Gamma_q(u_q,g_q)\Delta_q u_q)
-\stackrel{\bullet}{\Gamma}_q(u_q)g_q\Big] dm_q
dq.
\end{eqnarray*}
Again by Lebesgue's density theorem this implies that 
$$0\le \int \Big[(\Delta_q g_q) \Gamma_q(u_q)
+2(\Delta_q u_q)^2g_q
+ 2\Gamma_q(u_q,g_q)\Delta_q u_q)
-\stackrel{\bullet}{\Gamma}_q(u_q)g_q\Big] dm_q$$
 for a.e. $q\in (S,T)$. 
\end{proof}

\subsection{From dynamic Bochner inequality (`integrated in time') to dynamic Bochner inequality pointwise in time}

In addition to our standing assumptions, let us now assume that 
\begin{itemize}
 \item   the domains $\D(\Delta_t)$ are independent of $t\in (0,T)$ and for $u,g\in\D(\Delta)$ with  $\Delta_t u, \Delta_tg\in L^\infty(X)$ the functions
\begin{equation}\label{Laplace cont}\tag{{\bf A2.b}}
 \begin{aligned}
 r\mapsto \Delta_r u,\quad
 q\mapsto \Delta_r P_{q,s}u,\quad
  q\mapsto \Delta_qP_{q,s}u, \quad q\mapsto \Delta_qP^*_{t,q}g, 
 \end{aligned}
\end{equation}
are continuous in $L^2(X)$ and bounded in $L^\infty(X)$;
\item for $u\in\F$ the function $\partial_s\Gamma_s(u)$ exists in $L^1(X)$ and the map
\begin{align}\label{eq: cont derivative}\tag{{\bf A2.c}}
I\times\F \ni (s,u)\mapsto \partial_s\Gamma_s(u)
\end{align}
is continuous  in $L^1(X)$.
\end{itemize}

Note that all these assumptions are trivially satisfied in the static case.

\begin{lma}\label{lma: con delta}
  The assumption \eqref{Laplace cont} implies that for $u,g\in\D(\Delta)$ with  $\Delta_t u, \Delta_tg\in L^\infty(X)$ the functions 
\begin{align*}
q\mapsto P_{q,s}u, \quad q\mapsto P^*_{t,q}g
\end{align*}
are continuous in $\F$. 
\end{lma}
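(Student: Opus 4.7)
The plan is to deduce $\F$-continuity from the already-established $L^2$-continuity (Proposition~\ref{prop:prop}) by proving convergence of the Dirichlet-form energies, and then to invoke uniform ellipticity \eqref{eq: s.f.} to transfer everything to the reference Hilbert norm on $\F$. For the forward propagator, I would fix a target time $q_0$ and expand, for $q$ near $q_0$,
\begin{align*}
\E_{q_0}(P_{q,s}u - P_{q_0,s}u) = \E_{q_0}(P_{q,s}u) + \E_{q_0}(P_{q_0,s}u) - 2\E_{q_0}(P_{q,s}u, P_{q_0,s}u).
\end{align*}
Because (A2.b) asserts the equality $\D(\Delta_t) = \D(\Delta)$ for all $t$, every function appearing above lies in $\D(\Delta_{q_0})$, and each of the three terms can be rewritten through the integration-by-parts identity $\E_{q_0}(v,w) = -\int v\,\Delta_{q_0} w\, dm_{q_0}$.

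Each of the resulting integrals then converges to the common value $\E_{q_0}(P_{q_0,s}u)$ as $q\to q_0$, so the alternating sum vanishes in the limit. The two ingredients are the $L^2$-convergence $P_{q,s}u \to P_{q_0,s}u$ from Proposition~\ref{prop:prop}, and the $L^2$-convergence $\Delta_{q_0}P_{q,s}u \to \Delta_{q_0}P_{q_0,s}u$ which is precisely the second continuity assertion of (A2.b) with $r = q_0$. Since both families are uniformly bounded in $L^\infty$ and $dm_{q_0}$ is equivalent to $dm$ with bounded density (by (A1.a)), Cauchy-Schwarz gives convergence of each integral, hence $\E_{q_0}$-convergence of $P_{q,s}u \to P_{q_0,s}u$. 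By \eqref{eq: s.f.} this is the same as $\F$-convergence.

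For the adjoint propagator $q\mapsto P^*_{t,q}g$ I would run the analogous expansion, but now evaluated at the running time $\E_q$ rather than at $\E_{q_0}$, because (A2.b) only supplies $L^2$-continuity of $q\mapsto \Delta_qP^*_{t,q}g$ and not of $q\mapsto \Delta_{q_0}P^*_{t,q}g$. Expanding
\begin{align*}
\E_q(P^*_{t,q}g - P^*_{t,q_0}g) = \E_q(P^*_{t,q}g) - 2\E_q(P^*_{t,q}g,\, P^*_{t,q_0}g) + \E_q(P^*_{t,q_0}g)
\end{align*}
and integrating by parts against $\Delta_q$, the first term is handled by the fourth clause of (A2.b), while the second and third invoke the first clause $r\mapsto \Delta_r v$ continuous in $L^2$ applied at the fixed function $v = P^*_{t,q_0}g \in \D(\Delta)$. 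Each integral again converges to $\E_{q_0}(P^*_{t,q_0}g)$, so the alternating sum tends to zero, and uniform ellipticity transfers this $\E_q$-convergence to $\F$-convergence. The main bookkeeping obstacle, and the reason the adjoint case is marginally subtler than the forward one, is precisely this choice of the time at which to freeze the Dirichlet form so that the continuity statements assumed in (A2.b) cover exactly the quantities that vary with $q$.
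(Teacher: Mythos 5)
Your proposal is correct and is essentially the paper's own argument: the paper's proof is just the phrase ``integration by parts,'' and your polarization of the energy combined with the $L^2$-continuity statements in (\textbf{A2.b}), Proposition \ref{prop:prop}, and the uniform ellipticity \eqref{eq: s.f.} is exactly what that one-liner is meant to encode. One minor bookkeeping remark: in the adjoint case the cross and third terms can be handled by putting $\Delta_q$ on the $q$-varying factor (fourth clause of (\textbf{A2.b})) or by the elementary bound $|\E_q(w)-\E_{q_0}(w)|\le(e^{2L|q-q_0|}-1)\E_{q_0}(w)$ from \eqref{eq: s.f.}, which avoids applying the first clause of (\textbf{A2.b}) to $P^*_{t,q_0}g$, a function whose time-$t$ Laplacian is not literally assumed to be bounded.
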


\begin{proof}
This follows from integration by parts.
\end{proof}

\begin{thm}
Under the previous assumptions, the dynamic Bochner inequality \eqref{est-IV} implies the following `dynamic  Bochner inequality pointwise in time':\\
$\forall t\in I$, 
$\forall u,g\in\D(\Delta)\cap L^\infty(X)$ with  $\Gamma_t(u) \in L^\infty(X)$ and $g\ge0$
\begin{equation}\label{ptwBochner}\int \Big[(\Delta_t g) \Gamma_t(u)
+2(\Delta_t u)^2g
+ 2\Gamma_t(u,g)\Delta_t u)
-\partial_t \Gamma_t(u)g\Big] dm_t\ge0.
\end{equation}
\end{thm}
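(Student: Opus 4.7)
Fix $t_0\in I$ and let $u,g\in\D(\Delta)\cap L^\infty(X)$ with $g\geq 0$ and $\Gamma_{t_0}(u)\in L^\infty(X)$. Since $\Gamma_{t_0}(u)\in L^\infty$, the Sobolev-to-Lipschitz property of the RCD space $(X,d_{t_0},m_{t_0})$ forces $u$ to be Lipschitz, so $u$ qualifies as an admissible initial datum in \eqref{est-IV}. The strategy is to apply the integrated Bochner inequality on small intervals $(S,T)\ni t_0$ to the evolved functions $u_q:=P_{q,S}u$ and $g_q:=P^*_{T,q}g$, then recover the pointwise statement at $t_0$ by time-averaging and Lebesgue differentiation as $S\uparrow t_0$ and $T\downarrow t_0$.

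Concretely, for $S<s<t_0<t<T$ the inequality \eqref{est-IV} gives $\int_s^t \Phi(q)\,dq\ge 0$, where
\[
\Phi(q):=\int\Bigl[(\Delta_q g_q)\,\Gamma_q(u_q)+2(\Delta_q u_q)^2 g_q+2\Gamma_q(u_q,g_q)\Delta_q u_q-\partial_q\Gamma_q(u_q)\,g_q\Bigr]\,dm_q,
\]
using \eqref{eq: cont derivative} to identify $\stackrel{\bullet}{\Gamma}_q(u_q)$ with the strong $L^1$-limit $\partial_q\Gamma_q(u_q)$ (partial derivative at frozen second argument). Dividing by $t-s$ and shrinking $(s,t)$ (and in parallel $(S,T)$) down to $\{t_0\}$, it suffices, by Lebesgue's differentiation theorem, to prove $\Phi(q)\to\Phi_0$, where $\Phi_0$ is the pointwise Bochner quantity at $t_0$ for the original pair $(u,g)$. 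The required continuity is assembled term by term from the standing assumptions: \eqref{Laplace cont} gives $L^2$-continuity and uniform $L^\infty$-bounds of $\Delta_q u_q$ and $\Delta_q g_q$; Lemma \ref{lma: con delta}, combined with the $L^2$-estimate $\|u_q-u\|_{L^2}\le (q-S)\sup_r\|\Delta_r u_r\|_\infty$ (and its analogue for $g$), yields $u_q\to u$ and $g_q\to g$ in $\F$, so by the comparability \eqref{eq: s.f.} the $\Gamma$-terms converge in $L^1$; \eqref{eq: cont derivative} delivers $\partial_q\Gamma_q(u_q)\to\partial_{t_0}\Gamma_{t_0}(u)$ in $L^1$; and the densities $e^{-f_q}$ converge uniformly by \textbf{(A1.a)}. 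Together with the $L^\infty$-bounds from Lemma \ref{lma: maximum}, each of the four products in $\Phi(q)$ converges in $L^1$, and the conclusion $\Phi_0\ge 0$ follows.

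The main obstacle is the ``moving-argument'' term $\int\partial_q\Gamma_q(u_q)\,g_q\,dm_q$, where the time differentiation is applied to $r\mapsto \Gamma_r(\cdot)$ at the fixed first slot while the function $u_q$ in the second slot itself drifts with $q$. This is precisely why the \emph{joint} $L^1$-continuity of $(s,v)\mapsto\partial_s\Gamma_s(v)$ on $I\times\F$ postulated in \eqref{eq: cont derivative} is indispensable: coupled with the $\F$-convergence $u_q\to u$, it upgrades the weak subsequential limit $\stackrel{\bullet}{\Gamma}$ to an honest derivative and supplies the continuity needed to pass from the time-averaged inequality to the pointwise one. Without this input, one could only salvage a one-sided weak-limit bound and would fall short of \eqref{ptwBochner}.
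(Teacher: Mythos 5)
Your overall strategy is the same as the paper's (use \eqref{eq: cont derivative} to replace $\stackrel{\bullet}{\Gamma}_q$ by $\partial_q\Gamma_q$, show the Bochner expression is continuous in $q$ via \eqref{Laplace cont}, \eqref{a priori grad} and Lemma \ref{lma: con delta}, and pass from the a.e.-in-$q$ inequality to the pointwise one by a limit), but there are two concrete gaps. First, your argument only covers the subclass $\Delta_t u,\Delta_t g\in L^\infty(X)$: assumption \eqref{Laplace cont} is formulated only for such data, and your own estimates (e.g. $\|u_q-u\|_{L^2}\le (q-S)\sup_r\|\Delta_r u_r\|_\infty$ and the uniform $L^\infty$-bounds on $\Delta_q u_q$, $\Delta_q g_q$) presuppose essentially bounded Laplacians, which the theorem's hypotheses ($u,g\in\D(\Delta)\cap L^\infty$, $\Gamma_t(u)\in L^\infty$ only) do not grant. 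You are missing the closing approximation step: prove \eqref{ptwBochner} first for this regular subclass and then recover the general case by mollifying $u,g$ with the static heat-semigroup mollifier of Definition \ref{molli} (which produces functions with $\psi_\eps,\Delta_t\psi_\eps\in\D(\Delta_t)\cap\Lip_b$), passing to the limit in each term; this is exactly how the paper concludes.

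Second, your symmetric shrinking window $S\uparrow t_0$, $T\downarrow t_0$ requires convergences that the stated assumptions do not literally provide: you need $\Delta_q P^*_{T,q}g\to\Delta_{t_0}g$ jointly in $(q,T)$ and $\Delta_{q}P_{q,S}u\to\Delta_{t_0}u$ as both $q$ and the \emph{initial} time $S$ move, whereas \eqref{Laplace cont} only gives one-parameter continuity with the other endpoint of the propagator fixed (and no uniform modulus as $T$ varies). The paper's ordering of limits is designed precisely to avoid this: anchor the adjoint flow at the target time $t$ itself, i.e. $g_q=P^*_{t,q}g$ on $[s,t]$, observe that the (continuous in $q$) Bochner expression is nonnegative a.e., hence at $q=s$ where $u_{s,s}=u$ \emph{exactly} — so no limit in the initial time is ever needed on the $u$-side — and then take the single limit $s\to t$, which uses exactly the listed continuity of $q\mapsto\Delta_qP^*_{t,q}g$ and Lemma \ref{lma: con delta}. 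Restructuring your limit in this asymmetric way (or else supplying the joint-continuity statements as additional lemmas) is needed to make the argument close under the hypotheses actually assumed.
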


\begin{proof} Given $t\in I$, 
$u,g\in\D(\Delta)\cap L^\infty(X)$ with  $\Gamma_t(u), \Delta_tu, \Delta_t g \in L^\infty(X)$ and $g\ge0$, choose $s<t$ and define 
 $u_{q,s}:=P_{q,s}u, g_q=P^*_{t,q}g$ for $q\in [s,t]$. Then the dynamic Bochner inequality in its integrated version and \eqref{eq: cont derivative} imply that the function 
 $$q\mapsto \int \Big[(\Delta_q g_q) \Gamma_q(u_{q,s})
+2(\Delta_q u_{q,s})^2g_q
+ 2\Gamma_q(u_{q,s},g_q)\Delta_q u_{q,s}
-\partial_q\Gamma_q(u_q)g_q\Big] dm_q$$ is nonnegative for a.e. $q$. Moreover, according to \eqref{Laplace cont}, Lemma \ref{lma: con delta} and \eqref{eq: cont derivative}, this function is continuous. Thus, in particular, it is nonnegative for $q=s$, i.e.
$$\int \Big[(\Delta_s P^*_{t,s}g) \Gamma_s(u)
+2(\Delta_s u)^2P^*_{t,s}g
+ 2\Gamma_s(u,P^*_{t,s}g)\Delta_s u
-\partial_s\Gamma_s(u)P^*_{t,s}g\Big] dm_s\ge0.$$
Now finally we consider the limit $s\to t$ which implies $P^*_{t,s}g\to g$ in $L^2(X)$ as well as
$\Delta_sP^*_{t,s}g\to \Delta_tg$ by \eqref{Laplace cont}. According to Lemma \ref{lma: con delta}, 
$P^*_{t,s}g\to g$ in $\F$. 
Therefore,
$$\int \Big[(\Delta_t g) \Gamma_t(u)
+2(\Delta_t u)^2g
+ 2\Gamma_t(u,g)\Delta_t u
-\partial_t\Gamma_t(u)g\Big] dm_t\ge0.$$
To obtain the estimate for general $u,g$, we approximate them using the static $(X,d_t,m_t)$-heat semigroup mollifier from Definition \ref{molli}.
\end{proof}

\section{The local logarithmic Sobolev inequalities}

\subsection{From $L^1$-gradient estimate to local logarithmic Sobolev  inequality}
\begin{thm}\label{thmlocsob}
Suppose that the $L^1$-gradient estimate 
\begin{align}\label{l1gradest}
\sqrt{ \Gamma_t (P_{t,s}u)}
\leq 
P_{t,s}\sqrt{\Gamma_s (u)}
\end{align}
  holds for every $s<t$, $u\in\F$ and $m$-a.e.. Then for every $s<t$ and $u\geq 0$ such that $u\in\D(S)$ and $\sqrt u\in \F$ we have $m$-a.e. on $X$
\begin{align}\label{eq:upper local log}
P_{t,s}(u\log u)-P_{t,s}u\log P_{t,s}u\leq& (t-s)P_{t,s}\left(\frac{\Gamma_s(u)}{u}\right)\\
\label{eq:lower local log}
P_{t,s}(u\log u)-P_{t,s}u\log P_{t,s}u\geq& (t-s)\frac{\Gamma_t(P_{t,s}u)}{P_{t,s}u}.
\end{align}
Estimate \eqref{eq:lower local log} holds more generally for all nonnegative $u\in\D(S)\cap L^1(X)$.
\end{thm}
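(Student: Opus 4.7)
The backbone is the standard Bakry--Émery interpolation along the heat flow, adapted to the time-dependent case. For a nonnegative test function $g \in L^\infty \cap \F$, set $u_r := P_{r,s}u$ and $g_r := P^*_{t,r}g$ for $r \in [s,t]$, and consider
\[
H(r) := \int g_r\, u_r \log u_r \, dm_r.
\]
By the adjoint identity \eqref{eq: adjoint}, $H(s) = \int g\, P_{t,s}(u\log u)\, dm_t$ and $H(t) = \int g\, (P_{t,s}u)\log(P_{t,s}u)\, dm_t$. The plan is first to prove the interpolation formula
\[
\int g\bigl[P_{t,s}(u\log u) - (P_{t,s}u)\log(P_{t,s}u)\bigr]\, dm_t
= \int_s^t \int g\, P_{t,r}\!\left(\frac{\Gamma_r(u_r)}{u_r}\right) dm_t\, dr,
\]
and then to bound the integrand on the right from above and below via the $L^1$-gradient estimate \eqref{l1gradest}.

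For the interpolation formula, I would differentiate $H(r)$ in $r$ using the heat and adjoint-heat equations: the time derivative of $g_r\, e^{-f_r}$ contributes $-(\Delta_r g_r)\,e^{-f_r}$ (since $\partial_r g_r = -\Delta_r g_r + g_r \dot f_r$), while $\partial_r u_r = \Delta_r u_r$ contributes a factor $(1+\log u_r)$. Two integrations by parts then give
\[
H'(r) = -\int g_r\, \frac{\Gamma_r(u_r)}{u_r}\, dm_r,
\]
the standard cancellation of the two $\Gamma_r(g_r, u_r)$-terms leaving the entropy production term. Integrating from $s$ to $t$ and reusing \eqref{eq: adjoint} gives the desired identity.

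For the two inequalities I would combine \eqref{l1gradest} with the elementary Cauchy--Schwarz bound for positive Markov operators $(Ph)^2 \le P(\varphi)\cdot P(h^2/\varphi)$ applied to $\varphi = u_r$ (or $\varphi = P_{r,s}u$) and $h = \sqrt{\Gamma_r(u_r)}$. This yields the key pointwise bound
\[
\frac{\Gamma_r(u_r)}{u_r} \le P_{r,s}\!\left(\frac{\Gamma_s(u)}{u}\right)
\quad \text{and} \quad
\frac{\Gamma_t(P_{t,s}u)}{P_{t,s}u} \le P_{t,r}\!\left(\frac{\Gamma_r(u_r)}{u_r}\right),
\]
the first obtained by writing $\sqrt{\Gamma_r(u_r)} \le P_{r,s}\sqrt{\Gamma_s(u)}$ and the second by writing $\sqrt{\Gamma_t(P_{t,s}u)} \le P_{t,r}\sqrt{\Gamma_r(u_r)}$. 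Substituting into the interpolation formula, applying $P_{t,r}$ (and using $P_{t,r}\circ P_{r,s} = P_{t,s}$) for the upper bound, and integrating the lower bound in $r \in [s,t]$, yields \eqref{eq:upper local log} and \eqref{eq:lower local log} respectively since $g \ge 0$ is arbitrary.

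The main technical obstacle is the division by $u_r$, which requires $u > 0$. I would run the argument first with $u+\varepsilon$ in place of $u$ — noting that $P_{r,s}(u+\varepsilon) \ge \varepsilon\, e^{-L(r-s)} > 0$ by Lemma \ref{lma: maximum} — and then send $\varepsilon \downarrow 0$, using the hypotheses $u\in\D(S)$ and $\sqrt{u}\in\F$ to pass to the limit in $u\log u$, $\Gamma_s(u)/u = 4\,\Gamma_s(\sqrt{u})$, and $\Gamma_t(P_{t,s}u)/P_{t,s}u$. To justify the chain-rule computation of $H'$ I would first take $u,g$ regular enough for all terms to lie in the appropriate Lebesgue spaces (e.g.\ via the static-semigroup mollifier of Definition \ref{molli} at time $s$) so that Proposition \ref{prop:prop} gives $\partial_r u_r = \Delta_r u_r$, $\partial_r g_r = \Delta^*_r g_r$ in $L^2$, and then relax by density. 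Finally, the extension of the lower bound \eqref{eq:lower local log} to arbitrary nonnegative $u \in \D(S)\cap L^1$ follows by applying the estimate at time $s+\delta$ to $P_{s+\delta,s}u \in \F$ and letting $\delta \downarrow 0$, exactly as in the proof of Theorem \ref{thmlocpoinc}.
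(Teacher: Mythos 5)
Your proposal is correct and follows essentially the same route as the paper: differentiate $r\mapsto\int g_r\,\psi(u_r)\,dm_r$ along the coupled heat/adjoint-heat flow to obtain the entropy-production identity, then combine the $L^1$-gradient estimate with the Cauchy--Schwarz bound $(Ph)^2\le P(\varphi)\,P(h^2/\varphi)$ in the two directions, with an $\varepsilon$-regularization and truncation to justify the computations. The paper regularizes the nonlinearity (taking $\psi_\varepsilon$ with $\psi_\varepsilon'(z)=\log(z+\varepsilon)+1$ and truncating $u\wedge n$) rather than shifting $u$ by $\varepsilon$, but this difference is cosmetic.
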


\begin{proof}
Define for $s<r<t$, $g\in L^1(X)\cap L^\infty(X)\cap \F$ such that $g\geq 0$ and $u\in\D(S)\cap L^\infty(X)$ such that $M\geq u\geq 0$ for some constant $M$ and $\sqrt u\in \F$
\begin{align*}
\Psi_\varepsilon(r):=\int g_r\, \psi_\varepsilon(u_r)\, dm_r,
\end{align*}
where $g_r=P_{t,r}^*  g$ and $u_r=P_{r,s}u$ and 
where $\psi_\varepsilon(z)\colon [0,\infty)\to\mathbb R$ by setting $\psi_\varepsilon'(z)=\log(z+\varepsilon)+1$ and $\psi_\varepsilon(0)=0$. 

Since $g_t,u_s\in \F$ we have by virtue of Proposition \ref{prop:prop} that $g_r,u_r\in H^1((s,t);L^2(X))$.
Since $g_r,u_r\in H^1((s,t);L^2(X))$ and $e^{-f_r}\in \Lip((s,t);L^\infty(X))$, we deduce that the map $r\mapsto \Psi_\eps(r)$ is locally absolutely continuous.
 Then, since $\psi_\eps'\in \Lip_b([0,M])$, we compute similarly as in the proof of Theorem \ref{thmlocpoinc} 
 
\begin{align*}
\frac{d}{dr}\Psi_\varepsilon(r)=&\int \Gamma_r(g_r,u_r)\psi_\varepsilon'(u_r)-\Gamma_r(g_r\psi_\varepsilon'(u_r),u_r)\, dm_r\\
=-&\int g_r\, \psi_\varepsilon''(u_r)\Gamma_r(u_r)\, dm_r
=-\int g_r\frac{\Gamma_r(u_r)}{u_r+\varepsilon}\, dm_r.
\end{align*}
Using the Cauchy-Schwarz inequality and \eqref{l1gradest} 
we find for the integrand

\begin{align*}
&P_{t,r}\left(\frac{\Gamma_r(u_r)}{u_r+\varepsilon}\right)\leq P_{t,r}\left(\frac{(P_{r,s}\sqrt{\Gamma_s(u)})^2}{u_r+\varepsilon}\right)\\
=&P_{t,r}\left(\frac{\Big(P_{r,s}\Big(\frac{\sqrt{\Gamma_s(u)}}{\sqrt{u+\varepsilon}}\sqrt{u+\varepsilon}\Big)\Big)^2}{u_r+\varepsilon}\right)
\leq P_{t,r}\left(\frac{P_{r,s}\Big(\frac{\Gamma_s(u)}{u+\varepsilon}\Big)(u_r+\varepsilon)}{u_r+\varepsilon}\right)\\
=&P_{t,s}\left(\frac{\Gamma_s(u)}{u+\varepsilon}\right).
\end{align*}
Integration over $(s,t)$ yields
\begin{align}\label{smoothest}
 \int g\psi_\varepsilon(P_{t,s}u)\, dm_t-\int gP_{t,s}(\psi_\varepsilon(u))\, dm_t\geq-(t-s)\int gP_{t,s}\left(\frac{\Gamma_s(u)}{u+\varepsilon}\right)\, dm_t.
\end{align}

Since $u\in\D(S)$ we have by Proposition 2.8 in \cite{sturm2016} that $P_{t,s}u\in \D(S)$ and we find by dominated convergence that the left hand side converges as $\eps\to0$ to
\begin{align*}
 \int gP_{t,s}u\log(P_{t,s}u)\, dm_t-\int gP_{t,s}(u\log u)\, dm_t,
\end{align*}
while by monotone convergence the right hand side converges to
\begin{align*}
 -(t-s)\int gP_{t,s}\left(\frac{\Gamma_s(u)}{u}\right)\, dm_t,
\end{align*}
and hence 
\begin{align}\label{onesidedest}
  \int gP_{t,s}u\log(P_{t,s}u)\, dm_t-\int gP_{t,s}(u\log u)\, dm_t\geq-(t-s)\int gP_{t,s}\left(\frac{\Gamma_s(u)}{u}\right)\, dm_t.
\end{align}

By taking $u^n:=u\wedge n$ and letting $n\to\infty$ we obtain
\eqref{onesidedest} for general $u\in\D(S)$ with $\sqrt u\in \F$, since $u^n\to u$ and $P_{t,s}u^n\to P_{t,s}u$ in $L^1(X)$, and $\Gamma(u^n)=\Gamma(u)1_{\{u<n\}}$ a.e..

Since $g$ is arbitrary we find for a.e. $x\in X$
\begin{align*}
P_{t,s}(u\log u)-P_{t,s}u\log P_{t,s}u\leq (t-s)P_{t,s}\left(\frac{\Gamma_s(u)}{u}\right).
\end{align*}

To obtain the reverse bound \eqref{eq:lower local log} we apply Jensen's inequality to the functions $\eta(z)=z^2$ and $\beta(z,w)=z^2/w$, which amounts to

\begin{align*}
P_{t,r}\left(\frac{\Gamma_r(P_{r,s}u)}{P_{r,s}u}\right)\geq \frac{P_{t,r}\Gamma_r(P_{r,s}u)}{P_{t,s}u}
\geq  \frac{(P_{t,r}\sqrt{\Gamma_r(P_{r,s}u)})^2}{P_{t,s}u}\geq  \frac{\Gamma_t(P_{t,s}u)}{P_{t,s}u}.
\end{align*}

A similar argumentation as above yields the desired estimate.
\end{proof}

\subsection{From local logarithmic Sobolev inequalities to dynamic Bochner inequality}
For this subsection we will additionally assume that (\textbf{A2.a-c})
hold. Moreover, we assume that $m_t(X)<\infty$ for some (hence all) $t\in(0,T)$ and that 
\begin{itemize}
\item for all fixed $s\in (0,T)$ and all $u\in \D(\Delta)\cap L^\infty(X)$ such that $\Delta_s u\in L^\infty(X)$
\begin{align}\label{eq: cont bounded}\tag{{\bf A3}}
 q\mapsto P_{q,s}u \text{ is continuous in }L^\infty(X);
\end{align}
\end{itemize}

Note that (\textbf{A3}) is always satisfied for the usual heat flow $(P_t)_{t\geq0}$ on RCD$(K,\infty)$-spaces, see Lemma \ref{regofexp}.

We show the following.
\begin{thm}
Assume that one of the local logarithmic Sobolev inequalities, \eqref{eq:upper local log} or \eqref{eq:lower local log}, holds. Then the pointwise dynamic Bochner holds for $t$, i.e. for all $v\in\D(\Delta_t)\cap L^\infty(X)$ such that $\Gamma_t(v)\in L^\infty(X)$ and all $g\in\D(\Delta_t)\cap L^\infty(X)$ with $g\geq0$ holds
\begin{align}\label{conflicting bochner}
 \frac12\int\Gamma_t(v)\Delta_t g\, dm_t+\int(\Delta_t v)^2g+\Gamma_t(v,g)\Delta_t v\, dm_t\geq\frac12 \int (\partial_t \Gamma_t)(v) g\, dm_t.
\end{align}
\end{thm}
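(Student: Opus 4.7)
The plan is to reduce the stated implication to the corresponding implication \emph{local Poincar\'e $\Rightarrow$ pointwise Bochner} already proved in Section 2, by recovering the local Poincar\'e inequality as the second-order Taylor expansion of the local log-Sobolev inequality in a small perturbation parameter.

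Given $v \in \F \cap L^\infty(X)$ (which covers the admissible $v$ from the statement since $\D(\Delta_t) \cap L^\infty \subset \F$), set $u_\eps := 1 + \eps v$ for small $\eps > 0$. Then $u_\eps$ is bounded and bounded away from zero, so---using $m_t(X) < \infty$---it lies in $\D(S) \cap L^1 \cap L^\infty$ with $\sqrt{u_\eps} \in \F$ and is therefore admissible in Theorem~\ref{thmlocsob}. Using the expansions $(1+w)\log(1+w) = w + w^2/2 + O(w^3)$ and $1/(1+w) = 1 + O(w)$, together with the $L^\infty$-contractivity of $P_{t,s}$ (Lemma~\ref{lma: maximum}) to control remainders uniformly in $\eps$, the upper log-Sobolev inequality applied to $u_\eps$ becomes, to leading order,
\[\tfrac{\eps^2}{2}\bigl[P_{t,s}(v^2) - (P_{t,s}v)^2\bigr] + O(\eps^3) \;\leq\; (t-s)\,\eps^2\, P_{t,s}\bigl(\Gamma_s v\bigr) + O(\eps^3).\]
Dividing by $\eps^2/2$ and sending $\eps \to 0^+$ yields the upper local Poincar\'e inequality for $v$; the identical computation applied to the lower log-Sobolev inequality (noting $\Gamma_t(1+\eps P_{t,s}v) = \eps^2\,\Gamma_t(P_{t,s}v)$) yields the lower local Poincar\'e inequality.

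Once the (upper or lower) local Poincar\'e inequality is established, the conclusion follows from Section 2: Theorem~\ref{thm poinc} (lower case) or its upper-Poincar\'e counterpart in Section~2.3, which uses \textbf{(A2.a)}, gives the dynamic Bochner inequality integrated in time, and the final theorem of Section~2.4 upgrades it to the pointwise dynamic Bochner inequality at $t$ via the regularity hypotheses \textbf{(A2.b)}, \textbf{(A2.c)}. The main technical point I anticipate is justifying the $\eps \to 0$ passage cleanly inside $P_{t,s}$, which is handled by bounded/dominated convergence using the $L^\infty$-bounds on $v$ and $\Gamma_s v$ and the contractivity of $P_{t,s}$; checking that $u_\eps$ satisfies the domain conditions of Theorem~\ref{thmlocsob} is immediate from boundedness of $v$ together with $m_t(X) < \infty$.
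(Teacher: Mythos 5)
Your argument is correct in substance, but it is a genuinely different route from the paper's. The paper proves the implication directly: it sets $u=e^v$, uses the identity from the proof of Theorem \ref{thmlocsob} expressing the log-Sobolev deficit as $\int_s^t\int g_r\Gamma_r(u_r)/u_r\,dm_r\,dr$, establishes absolute continuity of $r\mapsto\int g_r\Gamma_r(u_r)/u_r\,dm_r$, differentiates, and then passes to the pointwise statement through a term-by-term continuity analysis that uses \textbf{(A2.a-c)} \emph{and} \textbf{(A3)}, followed by the substitution $\tilde g=e^{-v}g$. You instead linearize: applying \eqref{eq:upper local log} or \eqref{eq:lower local log} to $u_\eps=1+\eps v$ and expanding to second order recovers \eqref{locpoinc1}, respectively \eqref{locpoinc2}, for all $v\in\F\cap L^\infty(X)$, and then the already-proven chain of Section 2 (Theorem \ref{thm poinc}, its upper counterpart using \textbf{(A2.a)}, and the Section 2.4 theorem using \textbf{(A2.b-c)}) yields the pointwise Bochner inequality. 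The expansion itself is sound: $u_\eps$ is admissible for Theorem \ref{thmlocsob} since it is bounded above and away from zero and $m_t(X)<\infty$; the remainders are $O(\eps^3)$ uniformly on the entropy side, while on the gradient side they are controlled by $\eps^3 P_{t,s}(\Gamma_s v)$ (finite a.e., even in $L^\infty$ via \eqref{eq: s.f.} when $\Gamma_t(v)\in L^\infty$), so the a.e.\ limit along a sequence $\eps_n\to0$ is justified; note you implicitly use $P_{t,s}1=1$, which holds because constants lie in $L^2$ (finite measure), $\Delta_t 1=0$, and solutions are unique. What your approach buys: it avoids \textbf{(A3)} and the lengthy continuity verification of the five terms in the paper's $\Phi$, reusing Section 2 instead. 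What it costs: your chain needs the log-Sobolev hypothesis for pairs $(s',t')$ with final times $t'$ strictly below $t$ (since Theorem \ref{thm poinc} is applied on intervals $(s,t)$ and uses the Poincar\'e inequality for all pairs inside), whereas the paper's direct proof only invokes the inequality for pairs ending at the fixed time $t$; under the global reading of \eqref{logsob1}/\eqref{logsob2} used in Theorem \ref{mainthm} (``for every $s<t$'') this is immaterial, but it makes your statement slightly less local in time than the paper's.
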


\begin{proof}
Let $v,\Delta_t v\in\D(\Delta_t)\cap\Lip_b(X)$. Define $u=e^v$. Then
$u\in \D(S)\cap\Lip_b(X)\cap \D(\Delta_t)$ with $\Delta_t u\in L^\infty(X)\cap \F$ and there exists constants $M,c$ such that $M\geq u\geq c>0$.
Let $g\in \D(\Delta_t)\cap L^\infty(X)$ with $g\geq0$. 

Then we know  from the proof of Theorem \ref{thmlocsob} that 
\begin{align*}
\int g(P_{t,s}(u\log u)-P_{t,s}(u)\log P_{t,s}(u))\, dm_t
=\int_s^t\int g_r\frac{\Gamma_r(u_r)}{u_r}\, dm_r\, dr.
\end{align*}

Together with \eqref{eq:upper local log} we find
\begin{equation}
\begin{aligned}\label{eq: ineq}
0\geq&\int g\left(P_{t,s}(u\log u)-P_{t,s}(u)\log P_{t,s}(u)-(t-s)P_{t,s}\Big(\frac{\Gamma_s(u)}{u}\Big)\right)\, dm_t\\
=&\int_s^t\int g_r\frac{\Gamma_r(u_r)}{u_r}\, dm_r-\int gP_{t,s}\left(\frac{\Gamma_s(u)}{u}\right)\, dm_t\, dr,
\end{aligned}
\end{equation}
where $u_r=P_{r,s}u$ and $g_r=P_{t,r}^*g$.

We now claim that the map 
\begin{align*}
r\mapsto \int g_r\frac{\Gamma_r( u_r)}{u_r}\, dm_r
\end{align*}
is absolutely continuous. To this end we compute for a.e. $r_1,r_2\in(s,t)$ with $r_1<r_2$
\begin{align*}
\left| \int g_{r_2}\frac{\Gamma_{r_2}( u_{r_2})}{u_{r_2}}\, dm_{r_2}- \int g_{r_1}\frac{\Gamma_{r_1}( u_{r_1})}{u_{r_1}}\, dm_{r_1}\right|
\leq &\left|\int_{r_1}^{r_2}\int \Delta_r g_r\frac{\Gamma_{r_2}( u_{r_2})}{u_{r_2}}\, dm_r\, dr\right|\\
&+\frac1{c^2}\left|\int_{r_1}^{r_2}\int  g_{r_1}\Delta_ru_{r}\Gamma_{r_2}(u_{r_2})\, dm_{r_1}\, dr\right|\\
&+C(r_2-r_1)\left|\int  g_{r_1}\frac{\Gamma_{r_1}( u_{r_2})}{u_{r_1}}\, dm_{r_1}\right|\\
&+\left|\int  \frac{g_{r_1}}{u_{r_1}}(\Gamma_{r_1}(u_{r_2})-\Gamma_{r_1}(u_{r_1}))\, dm_{r_1}\right|,
\end{align*}
where we applied \eqref{eq: s.f.} for the third term on the right hand side.

The first three terms are finite by virtue of \eqref{a priori grad} and Proposition \ref{prop:prop}. For the last one we further compute
\begin{align*}
\left|\int  \frac{g_{r_1}}{u_{r_1}}(\Gamma_{r_1}( u_{r_2})-\Gamma_{r_1}( u_{r_1}))\, dm_{r_1}\right|
=&\left|\int  \frac{g_{r_1}}{u_{r_1}}\Gamma_{r_1}( u_{r_2}- u_{r_1}, u_{r_2}+ u_{r_1})\, dm_{r_1}\right|\\
\leq&\left|\int \frac{g_{r_1}}{u_{r_1}}( u_{r_2}- u_{r_1})\Delta_{r_1}( u_{r_2}+ u_{r_1})\, dm_{r_1}\right|\\
&+\left|\int \Gamma_{r_1}\left( u_{r_2}+ u_{r_1},\frac{g_{r_1}}{u_{r_1}}\right)( u_{r_2}- u_{r_1})\, dm_{r_1}\right|\\
\leq& 2\int_{r_1}^{r_2}\int |\Delta_ru_r|^2\, dm_{r_1}\, dr+(r_2-r_1)\int\left(\frac{g_{r_1}}{u_{r_1}}\Delta_{r_1}(u_{r_2}+u_{r_1})\right)^2\, dm_{r_1}\\
&+(r_2-r_1)\int  \Gamma_{r_1}(u_{r_2}+ u_{r_1})\Gamma_{r_1}\left(\frac{g_{r_1}}{u_{r_1}}\right)\, dm_{r_1}.
\end{align*}

This proves absolute continuity and together with \eqref{eq: ineq} we obtain
\begin{align}\label{der local log}
0\geq& \int_s^t\left(\int g_r\frac{\Gamma_r( u_r)}{u_r}\, dm_r-\int g_s\frac{\Gamma_s( u_s)}{u_s}\, dm_s\right)\, dr\\
\nonumber=&\int_s^t\int_s^r\frac{d}{dq}\int g_q\frac{\Gamma_q(u_q)}{u_q}\, dm_q\, dq\, dr.
\end{align}
The almost everywhere derivative is given by
\begin{align*}
\frac{d}{dq}\int g_q\frac{\Gamma_q(u_q)}{u_q}\, dm_q
=&
\lim_{h\to 0}\frac1h\left(\int g_{q+h}\frac{\Gamma_{q+h}(u_{q+h})}{u_{q+h}}\, dm_{q+h}
-\int g_q\frac{\Gamma_q(u_q)}{u_q}\, dm_q\right)\\
=&-\int \Delta_qg_q\frac{\Gamma_q(u_q)}{u_q}\, dm_q
-\int \Delta_qu_q \Gamma_q(u_q)\frac{g_q}{u_q^2}\, dm_q\\
&-2\int (\Delta_qu_q)^2\frac{g_q}{u_q}\, dm_q-2\int \Gamma_q\left(u_q,\frac{g_q}{u_q}\right)\Delta_qu_q\, dm_q\\
&+\int \frac{g_q}{u_q}(\partial_q\Gamma_q)(u_q)\, dm_q,
\end{align*}
where we used $u\geq c$, $u_q,g_q\in H^1((s,t);L^2(X))$, Proposition \ref{prop:prop}, \textbf{(A3)}, \textbf{(A2.a-c)}, and Lemma \ref{lma: con delta}.

Together with \eqref{der local log} we get 
\begin{align*}
0\geq&\int_s^t\int_s^r\Big[-\int \Delta_qg_q\frac{\Gamma_q(u_q)}{u_q}\, dm_q
-\int \Delta_qu_q \Gamma_q(u_q)\frac{g_q}{u_q^2}\, dm_q
-2\int (\Delta_qu_q)^2\frac{g_q}{u_q}\, dm_q\\
&-2\int \Gamma_q\left(u_q,\frac{g_q}{u_q}\right)\Delta_qu_q\, dm_q+\int \frac{g_q}{u_q}(\partial_q\Gamma_q)(u_q)\, dm_q\Big]\, dq\, dr\\
=&\int_s^t(t-q)\Big[-\int \Delta_qg_q\frac{\Gamma_q(u_q)}{u_q}\, dm_q
-\int \Delta_qu_q \Gamma_q(u_q)\frac{g_q}{u_q^2}\, dm_q
-2\int (\Delta_qu_q)^2\frac{g_q}{u_q}\, dm_q\\
&-2\int \Gamma_q\left(u_q,\frac{g_q}{u_q}\right)\Delta_qu_q\, dm_q+\int \frac{g_q}{u_q}(\partial_q\Gamma_q)(u_q)\, dm_q\Big]\, dq.
\end{align*}
Define
\begin{align*}
\Phi(q)=&-\int \Delta_qg_q\frac{\Gamma_q(u_q)}{u_q}\, dm_q
-\int \Delta_qu_q \Gamma_q(u_q)\frac{g_q}{u_q^2}\, dm_q
-2\int (\Delta_qu_q)^2\frac{g_q}{u_q}\, dm_q\\
&-2\int \Gamma_q\left(u_q,\frac{g_q}{u_q}\right)\Delta_qu_q\, dm_q+\int \frac{g_q}{u_q}(\partial_q\Gamma_q)(u_q)\, dm_q
\end{align*}

We want to show that $\Phi\colon[s, t]\to\mathbb R$ defines a continuous function. 
In order to do so, we consider each term separately.

 The first term $q\mapsto \int\Delta_q(g_q)\frac{\Gamma_q(u_q)}{u_q}\, dm_q$ is continuous since $q\mapsto \Delta_qg_q$, and $q\mapsto u_{q}^{-1}$ are continuous in $L^2(X)$ by \eqref{Laplace cont} and \eqref{heat-def2}, $q\mapsto \Gamma_q(u_{q})$ is weak$^*$ continuous in $L^\infty(X)$ by  Lemma \ref{lma: con delta} and \eqref{a priori grad}.
  
The second term  $q\mapsto \int\Delta_q(u_q)\frac{\Gamma_q(u_q)}{u^2_q}g_q\, dm_q$ is continuous since $q\mapsto \Delta_qu_q$ is continuous in $L^2(X)$ by \eqref{Laplace cont},
$q\mapsto \frac{g_q}{u_{q}^{2}}$ is continuous in $L^\infty(X)$ by \eqref{heat-def2} and \eqref{eq: cont bounded}, and $q\mapsto \Gamma_q(u_{q})$ is weak$^*$ continuous in $L^\infty(X)$ by  Lemma \ref{lma: con delta} and \eqref{a priori grad}.

The third term $q\mapsto \int (\Delta_q u_{q})^2\frac{g_q}{u_q}\, dm_q$ is continuous since $q\mapsto \Delta_q u_{q}$ is continuous in $L^2(X)$ by \eqref{Laplace cont}, and $q\mapsto \Gamma_q(u_{q})$ is weak$^*$ continuous by \eqref{a priori grad} and Lemma \ref{lma: con delta}, and $q\mapsto \frac{g_q}{u_q}$ are continuous in $L^\infty(X)$ by \eqref{eq: cont bounded}. 

The fourth term $q\mapsto \int \Gamma_q( u_{q},\frac{g_q}{u_q})\Delta_q u_{q}\, dm_q$ is continuous since $q\mapsto \Delta_q u_{q}$ is continuous in $L^2(X)$ and weak$^*$-continuous in $L^\infty(X)$ by \eqref{Laplace cont}, and $q\mapsto \Gamma_q(u_{q},\frac{g_q}{u_q})$ is continuous in $L^1(X)$ by \textbf{(A3.a)} and Lemma \ref{lma: con delta}. 

The last term $q\mapsto \int \frac{g_q}{u_q}(\partial_q\Gamma_q)(u_q)\, dm_q$ is continuous since $q\mapsto\frac{ g_q}{u_q}$ is continuous in $L^\infty(X)$ by \eqref{eq: cont bounded} and  $q\mapsto(\partial_q\Gamma_q)(u_q)$ is continuous in $L^1(X)$ by \textbf{(A2.c)} and $q\mapsto e^{-f_q}$ is continuous in $L^\infty(X)$.

Then it holds by Lebesgue differentiation
\begin{align*}
0\geq& \int \frac{g_s}{u}(\partial_s\Gamma_s)(u)\, dm_s-\int \Delta_sg_s\frac{\Gamma_s(u)}{u}\, dm_s
-\int \Delta_su \Gamma_s(u)\frac{g_s}{u^2}\, dm_s\\
&-2\int (\Delta_su)^2\frac{g_s}{u}\, dm_s
-2\int \Gamma_s\left(u,\frac{g_s}{u_s}\right)\Delta_su\, dm\\
=&\int ug_s(\partial_s\Gamma_s)(\log u)-\Delta_s(ug_s)\Gamma_{s}(\log u_s)\, dm_s-2\int (\Delta_s\log u)^2ug_s+\Gamma_s(\log u,ug_s)\Delta_s \log u\, dm_s,
\end{align*}
where we used the chain rule in the last equation.

Similarly as before we let $s\to t$ and obtain after choosing $\tilde g=e^{-v}g\in \D(\Delta_t)\cap L^\infty(X)$ and obtain recalling $u=e^v$
\begin{align*}
0\geq\int \tilde g\partial_t\Gamma_t(v)-\Delta_t(\tilde g)\Gamma_{t}(v)\, dm_t-2\int (\Delta_t v)^2\tilde g+\Gamma_t(v,\tilde g)\Delta_t v\, dm_t
\end{align*}
for all $v,\Delta_t v\in\D(\Delta_t)\cap\Lip_b(X)$ and $\tilde g\in \D(\Delta_t)\cap L^\infty(X)$ with $\tilde g\geq0$. The result for general $v\in\D(\Delta_t)\cap L^\infty(X)$ such that $\Gamma_t(v)\in L^\infty(X)$ and all $\tilde g\in\D(\Delta_t)\cap L^\infty(X)$ with $\tilde g\geq0$ follows by approximation with the semigroup mollifier from Definition \ref{molli}.

Similarly one deduces Bochner from the reverse local logarithmic Sobolev bound. Indeed by \eqref{eq:lower local log} it holds by the same argument as above
\begin{align*}
0\leq\int_s^t\int g_ru_r\Gamma_r(\log u_r)\, dm_r-\int g\frac{\Gamma_t(u_t)}{u_t}\, dm_t\, dr
\end{align*}
and since $q\mapsto\int g_qu_q\Gamma_q(\log u_q)\, dm_q$
\begin{align*}
0\geq\int_s^t\frac{d}{dq}\int_r^t\int g_qu_q\Gamma_q(\log u_q)\, dm_q\, dq\, dr,
\end{align*}
which is the same as in line \eqref{der local log}.
\end{proof}

\section{The dimension independent Harnack inequality}

\subsection{From $L^1$-gradient estimate to dimension independent  Harnack inequality}

This section will be devoted to derive the following result.
\begin{thm}\label{thm: thm}
Fix $\alpha>1$. Suppose that the $L^1$-gradient estimate \eqref{l1gradest}
holds.
Then for all $u\in L^2(X)$ such that $u\geq 0$, $t>s$ and $m$-a.e. $x,y\in X$ we have
 \begin{align}\label{eq: correct Harnack}
  (P_{t,s}u)^\alpha(y)\leq (P_{t,s}u^\alpha)(x)\exp\left\{\frac{\alpha d_t^2(x,y)}{4(\alpha-1)(t-s)}\right\}.
 \end{align}
\end{thm}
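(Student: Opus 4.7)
The plan is to implement Wang's classical semigroup-along-geodesics argument, adapted to the time-dependent setting and using the reverse local logarithmic Sobolev inequality \eqref{eq:lower local log} that we already obtained from the $L^1$-gradient estimate in Theorem \ref{thmlocsob}. Since $(X,d_t,m_t)$ is an $\mathrm{RCD}(K,\infty)$-space, it is a geodesic space with respect to $d_t$, so for given $x,y\in X$ there exists a minimizing constant-speed $d_t$-geodesic $\gamma\colon[0,1]\to X$ with $\gamma(0)=y$ and $\gamma(1)=x$ and $|\dot\gamma_a|_t=d_t(x,y)$ for a.e.\ $a$. Set $\beta(a):=1+a(\alpha-1)$ so that $\beta(0)=1$, $\beta(1)=\alpha$, and $\beta'(a)=\alpha-1$.

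The key object is
\[
\phi(a):=\frac{\alpha}{\beta(a)}\log P_{t,s}(u^{\beta(a)})(\gamma(a)),
\]
which interpolates between $\phi(0)=\alpha\log P_{t,s}u(y)$ and $\phi(1)=\log P_{t,s}(u^\alpha)(x)$, so the claim \eqref{eq: correct Harnack} is equivalent to the lower bound $\phi(1)-\phi(0)\ge -\frac{\alpha d_t^2(x,y)}{4(\alpha-1)(t-s)}$. Computing (formally for now) and gathering the two terms coming from the change in the exponent,
\[
\phi'(a)=\frac{\alpha\beta'(a)}{\beta(a)^2}\cdot\frac{P_{t,s}(u^{\beta(a)}\log u^{\beta(a)})-P_{t,s}u^{\beta(a)}\cdot\log P_{t,s}u^{\beta(a)}}{P_{t,s}u^{\beta(a)}}(\gamma(a))+\frac{\alpha}{\beta(a)}\cdot\frac{\langle\dot\gamma_a,\nabla_t P_{t,s}u^{\beta(a)}\rangle(\gamma(a))}{P_{t,s}u^{\beta(a)}(\gamma(a))}.
\]
The first summand is bounded below by $\frac{\alpha\beta'(a)(t-s)}{\beta(a)^2}\cdot\frac{\Gamma_t(P_{t,s}u^{\beta(a)})}{(P_{t,s}u^{\beta(a)})^2}(\gamma(a))$ thanks to \eqref{eq:lower local log} applied to $u^{\beta(a)}$, while the second is bounded below by $-\frac{\alpha d_t(x,y)}{\beta(a)}\cdot\frac{\sqrt{\Gamma_t(P_{t,s}u^{\beta(a)})}}{P_{t,s}u^{\beta(a)}}(\gamma(a))$ via Cauchy--Schwarz for the metric differential. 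Writing $A$ for the ratio $\sqrt{\Gamma_t(P_{t,s}u^{\beta(a)})}/P_{t,s}u^{\beta(a)}$ and completing the square in $A$ yields
\[
\phi'(a)\ge \frac{\alpha}{\beta(a)^2}\Bigl[\beta'(a)(t-s)\,A^2-\beta(a)d_t(x,y)\,A\Bigr]\ge -\frac{\alpha d_t^2(x,y)}{4\beta'(a)(t-s)}=-\frac{\alpha d_t^2(x,y)}{4(\alpha-1)(t-s)}.
\]
Integrating from $0$ to $1$ gives exactly the desired bound.

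Making this rigorous is the real work. First I would reduce to the case where $u$ is bounded and uniformly bounded below by a positive constant (replacing $u$ by $u\vee \varepsilon$, sending $\varepsilon\to 0$ and $\|u\|_\infty$-truncation at the end by monotone/dominated convergence, using Lemma \ref{lma: maximum}); this ensures $u^{\beta(a)}\in L^2\cap L^\infty$ uniformly in $a$. Second, one needs pointwise (not only $m$-a.e.) representatives of $P_{t,s}u^{\beta(a)}$ and continuity along $\gamma$; for this I would invoke the Lipschitz continuity of $P_{t,s}v$ guaranteed by the $L^\infty$-gradient estimate \eqref{uniform grad est} of Theorem \ref{thmlocpoinc}, which follows from \eqref{l1gradest}. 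Third, the derivative in $a$ of $P_{t,s}(u^{\beta(a)})(\gamma(a))$ should be justified as an absolutely continuous function of $a$; the exponent-variation part is controlled by bounded convergence using $\|u\|_\infty\ge u\ge \varepsilon>0$, and the spatial-variation part is handled by observing that $|f(\gamma(a_2))-f(\gamma(a_1))|\le d_t(x,y)\int_{a_1}^{a_2}\mathrm{lip}_t f(\gamma(r))\,dr$ for Lipschitz $f$, with $\mathrm{lip}_t f\le\sqrt{\Gamma_t(f)}$ $m$-a.e.\ in the RCD setting.

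The main obstacle is this third step: ensuring that $a\mapsto\phi(a)$ is absolutely continuous and that the chain-rule expression for $\phi'(a)$ holds pointwise a.e., with all the quantities $P_{t,s}(u^{\beta(a)}\log u)$, $\sqrt{\Gamma_t(P_{t,s}u^{\beta(a)})}$ evaluated at $\gamma(a)$ being well-defined and satisfying the pointwise inequalities above. The reverse local log-Sobolev \eqref{eq:lower local log} is only an $m$-a.e.\ statement, so one actually obtains the bound on $\phi'(a)$ integrated against a non-negative test function along the geodesic; what rescues the argument is that both $P_{t,s}u^{\beta(a)}$ and its carré-du-champ $\Gamma_t$ admit continuous (Lipschitz, resp.\ upper semicontinuous) representatives under our $\mathrm{RCD}$ standing assumption, so the inequalities can be upgraded to hold everywhere along $\gamma$. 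Once $\phi'(a)\ge-\frac{\alpha d_t^2(x,y)}{4(\alpha-1)(t-s)}$ is established in an integrable sense, integration in $a$ and then passing to the limit $\varepsilon\to 0$ delivers \eqref{eq: correct Harnack}.
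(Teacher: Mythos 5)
Your formal computation is the classical Wang interpolation (variable exponent $\beta(a)$ along a $d_t$-geodesic, reverse local log-Sobolev plus Cauchy--Schwarz plus completion of the square), and the constants do come out right; this is a genuinely different route from the paper, which never uses \eqref{eq:lower local log} here but instead studies $r\mapsto\int\log\big(P_{t,r}\eta_\eps(P_{r,s}u)+\eps\big)\,d\mu_r$ along \emph{regular curves} $\mu_r=\rho_r m_t$ in Wasserstein space, estimates its derivative directly from \eqref{l1gradest} via velocity densities, and only at the very end sends $\mu_s\to\delta_x$, $\mu_t\to\delta_y$. However, your argument has a genuine gap exactly at the step you yourself flag as the real work: every analytic input you invoke is only an $m$-a.e.\ statement, while the geodesic $\gamma$ traces an $m$-negligible set. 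Concretely: (i) the reverse local log-Sobolev inequality \eqref{eq:lower local log} of Theorem \ref{thmlocsob} holds $m$-a.e., and nothing in the paper (nor in the time-dependent setting generally) provides a continuous or semicontinuous representative of $\Gamma_t(P_{t,s}u^{\beta(a)})$ or of the entropy defect, so ``upgrading the inequality to hold everywhere along $\gamma$'' is an unproved assertion rather than a consequence of the standing RCD hypothesis; (ii) $\sqrt{\Gamma_t(f)}$ is a \emph{weak} upper gradient, controlling increments only along test-plan-almost every curve, never along one fixed geodesic, so the bound $|f(\gamma(a_2))-f(\gamma(a_1))|\le d_t(x,y)\int_{a_1}^{a_2}\sqrt{\Gamma_t(f)}(\gamma(r))\,dr$ does not follow; moreover your claimed inequality $\lip_t f\le\sqrt{\Gamma_t(f)}$ $m$-a.e.\ is stated in the wrong direction (in general $\sqrt{\Gamma_t(f)}\le\lip_t f$), and even if an a.e.\ identity were available it still could not be evaluated on the null set $\gamma([0,1])$.

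The paper's device is designed precisely to avoid this: by testing against measures with bounded densities and finite energy (regular curves) and using the minimal velocity density, all the a.e.\ inequalities (the $L^1$-gradient estimate, and \eqref{uniform grad est} for integrability) may legitimately be integrated, and pointwise information is recovered only for $m$-a.e.\ $x,y$ in the limit $\mu_s\to\delta_x$, $\mu_t\to\delta_y$ --- which is also why the theorem is stated for a.e.\ $x,y$. To rescue your route you would have to either (a) establish pointwise-everywhere regularity in the time-dependent setting (continuous representatives of $P_{t,s}$ of bounded functions \emph{and} everywhere versions of \eqref{eq:lower local log} and of the slope bound), none of which is available from the results at hand, or (b) rerun your variable-exponent functional at the level of measures along approximate $W_t$-geodesics with absolutely continuous marginals --- i.e.\ essentially re-import the paper's regular-curve/velocity-density machinery, including the careful justification of absolute continuity and differentiation that the paper carries out for its own functional.
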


\medskip

Before starting with the proof of this results, let us 
 recall the notion of \emph{regular curves} as introduced in \cite{agsbe} and refined in \cite{ams}, as well as the notion of 
\emph{velocity densities} taken from \cite{ams}.
A curve $(\mu_r)_{r\in[0,1]}$ with $\mu_r=\rho_rm$ is called \emph{regular} if the following
are satisfied:
\begin{itemize}
 \item $\mu\in \Lip([0,1];(\mathcal P_2(X),W)) \cap \cC^1([0,1]; L^1(X))$ 
 \item There exists a constant $R>0$ such that $\rho_r\leq R$ $m$-a.e. for every $s\in[0,1]$
 \item $\sqrt{\rho_r}\in\D (\E)$ such that $\E(\sqrt \rho_r)\leq E$ for every $s\in [0,1]$.
\end{itemize}
We recall the following result (Lemma 12.2 in \cite{ams}).
\begin{lma}
 For every geodesic $(\mu_r)_{r\in[0,1]}$ there exist regular curves $\mu^n$ such that $\mu_r^n\to \mu_r$ in $L^2$-Kantorovich sense for all $r\in[0,1]$ and
 \begin{align*}
  \limsup_n\int_0^1|\dot\mu_r^n|^2\, dr\leq W^2(\mu_0,\mu_1).
 \end{align*}
\end{lma}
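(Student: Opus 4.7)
My plan is to run Wang's monotone-functional argument along a $W_t$-Wasserstein interpolation, with the $L^1$-gradient estimate \eqref{l1gradest} controlling the dissipation. Since the target inequality is pointwise $m$-a.e., I will first establish an integrated version against a pair of probability measures $\mu^0,\mu^1\in\mathcal P(X)$ with bounded densities approximating $\delta_y$ and $\delta_x$, and then recover the claimed pointwise inequality by localizing these measures to small $d_t$-balls around $y$ and $x$ and invoking Lebesgue differentiation for $P_{t,s}u$ and $P_{t,s}(u^\alpha)$.

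Fix such $\mu^0,\mu^1$, let $(\mu^a)_{a\in[0,1]}$ be a $W_t$-geodesic between them, and invoke the preceding lemma to choose regular curves $(\mu^{a,n})$ with $\limsup_n\int_0^1|\dot\mu^{a,n}|_t^2\,da\leq W_t^2(\mu^0,\mu^1)$. Pick the interpolating exponent $\beta(a):=\alpha/(\alpha-a(\alpha-1))$, so that $\beta(0)=1$, $\beta(1)=\alpha$, and $\beta'(a)/\beta(a)^2=(\alpha-1)/\alpha$ is constant. Assuming first $0<\varepsilon\leq u\leq M$, consider the functional
\begin{equation*}
\Phi_n(a) := \frac{\alpha}{\beta(a)}\,\log\int P_{t,s}(u^{\beta(a)})\,d\mu^{a,n}.
\end{equation*}
After the later limits, $\Phi_n(0)\to\alpha\log P_{t,s}u(y)$ and $\Phi_n(1)\to\log P_{t,s}(u^\alpha)(x)$, so the target estimate is equivalent to $\Phi_n(1)-\Phi_n(0)\geq -\alpha d_t^2(x,y)/[4(\alpha-1)(t-s)]$ in the limit.

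Differentiating $\Phi_n$ in $a$ produces two contributions: a variation-of-$\beta$ term involving the entropy quantity $P_{t,s}(u^{\beta(a)}\log u)$, and a moving-measure term that, via the continuity equation for regular curves and Cauchy--Schwarz, is controlled by $|\dot\mu^{a,n}|_t\cdot\bigl(\int\Gamma_t(P_{t,s}(u^{\beta(a)}))\,d\mu^{a,n}\bigr)^{1/2}$. Applying \eqref{l1gradest} and then Cauchy--Schwarz on $P_{t,s}$ gives
\begin{equation*}
\Gamma_t(P_{t,s}(u^\beta))\leq\bigl(P_{t,s}\sqrt{\Gamma_s(u^\beta)}\bigr)^2\leq P_{t,s}(u^\beta)\cdot\beta^2\,P_{t,s}\bigl(u^{\beta-2}\Gamma_s u\bigr),
\end{equation*}
and the entropy produced by $\beta'(a)\neq 0$ combines with this dissipation through Young's inequality $2|pq|\leq \lambda p^2+\lambda^{-1} q^2$, with $\lambda$ tuned jointly to the specific form of $\beta$, to yield
\begin{equation*}
\Phi_n'(a)\geq -\frac{\alpha}{4(\alpha-1)(t-s)}\,|\dot\mu^{a,n}|_t^2.
\end{equation*}

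Integrating $a\in[0,1]$, passing successively to the limits $n\to\infty$ (using the bound on $\int|\dot\mu^{a,n}|_t^2 da$), then $\mu^0\to\delta_y$, $\mu^1\to\delta_x$ (localization and Lebesgue differentiation), and finally removing the restriction on $u$ by monotone approximation ($u\wedge M$ as $M\to\infty$, and $u+\varepsilon$ as $\varepsilon\to 0$), completes the argument. The main obstacle I anticipate is the sharp matching of constants in the third paragraph: making the entropy produced by $\beta'$ cancel against the $L^1$-gradient dissipation to land on the optimal factor $\alpha/[4(\alpha-1)]$ requires the delicate joint choice of $\beta(a)$ and the Young parameter $\lambda$. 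A secondary technical point is the rigorous justification of the a.c.\ derivative of $\Phi_n$ in $a$: this rests on the boundedness and $\F$-regularity built into regular curves, the time regularity of the propagators from Proposition \ref{prop:prop}, and the log-Lipschitz link \eqref{eq: s.f.} between $\Gamma_s$ and $\Gamma_t$.
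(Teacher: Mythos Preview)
Your proposal does not address the stated lemma at all. The lemma in question is the approximation result asserting that every $W$-geodesic $(\mu_r)_{r\in[0,1]}$ can be approximated by regular curves $\mu^n$ with $\limsup_n\int_0^1|\dot\mu_r^n|^2\,dr\leq W^2(\mu_0,\mu_1)$. In the paper this lemma is not proved; it is simply recalled from \cite{ams}, Lemma 12.2. What you have written is instead a sketch of the proof of Theorem \ref{thm: thm}, the dimension-independent Harnack inequality, which \emph{uses} the approximation lemma as an ingredient (you even write ``invoke the preceding lemma to choose regular curves''). So the entire proposal is aimed at the wrong target.

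If your intent was to prove the approximation lemma itself, none of the machinery you describe (the $L^1$-gradient estimate, Wang's interpolating exponent $\beta(a)$, the functional $\Phi_n$) is relevant. The content of the lemma lies in the construction of regular curves---via mollification in time together with the static heat flow to gain the density bounds and Cheeger-energy control---and in the upper semicontinuity of the action under these regularizations; this is carried out in \cite{ams} and is independent of any Harnack-type argument. If instead you meant to submit a proof of Theorem \ref{thm: thm}, your outline is in the right spirit and close to the paper's approach (which uses the functional $\Psi^\varepsilon(r)=\int\omega_\varepsilon(P_{t,r}\eta_\varepsilon(P_{r,s}u))\,d\mu_r$ with $\eta_\varepsilon(z)=(z+\varepsilon)^\alpha-\varepsilon^\alpha$ and the velocity density of regular curves rather than a varying exponent $\beta(a)$), but it is attached to the wrong statement.
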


A regular curve $\mu$ admits a \emph{velocity density} $v\in L^2(X\times[0,1],\int\mu_t\, dt)$ in the sense that for every $\varphi\in\F$
\begin{align}\label{eq: regular}
 \left|\int\varphi\, d\mu_t-\int\varphi\, d\mu_s\right|\leq\int_s^t \int\sqrt{\Gamma(\varphi)}v_r\, d\mu_r\, dr
\end{align}
and there exists a unique velocity density with minimal $L^2(X\times[0,1],\int\mu_t\, dt)$-norm satisfying
\begin{align*}
 |\dot\mu_t|^2=\int v_t^2\, d\mu_t \quad\text{ for a.e. } t\in[0,1],
\end{align*}
see Theorem 6.6 and Lemma 8.1 in \cite{ams}.

\begin{proof}[Proof of Theorem \ref{thm: thm}]
Let $u\in L^2(X)\cap L^\infty(X)$, with $u\leq M$ $m$-a.e..
 Fix $s<t$ and define for all $s<r<t$
 \begin{align*}
 \psi_r^\varepsilon(u)&:=P_{t,r}\eta_\varepsilon(P_{r,s}u)\\
  \Psi^\varepsilon(r)&:=\int \omega_\varepsilon(\psi_r^\varepsilon(u))\, d\mu_r,
 \end{align*}
 where $\mu_r=\rho_rm_t$ is a regular curve in $\mathcal P_2(X)$, and $\omega_\eps, \eta_\eps$ are functions on $\mathbb R$ given
 \begin{align*}
  \eta_\varepsilon(z)=(z+\varepsilon)^\alpha-\varepsilon^\alpha,\qquad \omega_\eps(z)=\log(z+\varepsilon), \qquad 0<\eps<1.
 \end{align*}
Note that $\eta_\eps,\eta_\eps',\omega_\eps, \omega'_\eps\in\Lip_b([0,M])$ and
\begin{align}\label{etaeq}
 \eta_\eps(z)+\eps\geq (z+\eps)^\alpha, \qquad \eta_\eps(z)\leq z^\alpha.
\end{align}
Then we readily find that 
\begin{align}\label{eq: psi}
r\mapsto \psi_r^\eps(u)\in \cC([s,t]; L^2(X))
\end{align}
by \eqref{heat-def2}, \eqref{ad-heat-def2}, \eqref{eq: adjoint}, Lemma \ref{lma: maximum} and $\eta_\eps\in\Lip_b([0,M])$.

We claim that $r\mapsto \Psi^\varepsilon(r)$ is locally absolutely continuous. To see this we write
\begin{equation}
\begin{aligned}\label{eq:splitpsi}
|\Psi^\varepsilon(r+h)-\Psi^\varepsilon(r)|\leq &|\int \omega'_\varepsilon(\psi_\zeta^\varepsilon(u))(\psi_{r+h}^\varepsilon(u)-\psi_r^\varepsilon(u))\, d\mu_{r+h}|\\
&+\int_r^{r+h}\int|\omega'_\varepsilon(\psi_{r}^\eps(u))|\sqrt{\Gamma_t(\psi_{r}^\eps(u))}v_s\, d\mu_s\, ds,
\end{aligned}
\end{equation}
where $\zeta, \xi\in (r,r+h)$ and $v$ is the unique velocity density of $\mu$. The first term we estimate by
\begin{align*}
&\left|\int \omega'_\varepsilon(\psi_\zeta^\varepsilon(u))(\psi_{r+h}^\varepsilon(u)-\psi_r^\varepsilon(u))\, d\mu_{r+h}\right|\\
=&\left|\int P_{t,r+h}^*(\frac{\rho_{r+h}}{\psi^\eps_\zeta(u)+\eps})\eta_\eps(P_{r+h,s}u)\, dm_{r+h}-\int P^*_{t,r}(\frac{\rho_{r+h}}{\psi_\zeta^\eps(u)+\eps})\eta_\eps(P_{r,s}u)\, dm_r\right|\\
\leq &\left|\int P_{t,r+h}^*(\frac{\rho_{r+h}}{\psi^\eps_\zeta(u)+\eps})\eta_\eps(P_{r+h,s}u)\, dm_{r+h}-\int P^*_{t,r}(\frac{\rho_{r+h}}{\psi_\zeta^\eps(u)+\eps})\eta_\eps(P_{r+h,s}u)\, dm_r\right|\\
+&\left|\int P_{t,r}^*(\frac{\rho_{r+h}}{\psi^\eps_\zeta(u)+\eps})\eta_\eps(P_{r+h,s}u)\, dm_{r}-\int P^*_{t,r}(\frac{\rho_{r+h}}{\psi_\zeta^\eps(u)+\eps})\eta_\eps(P_{r,s}u)\, dm_r\right|\\
\leq &\int_r^{r+h}\int \left|\Gamma_q\left(P_{t,q}^*\left(\frac{\rho_{r+h}}{\psi^\eps_\zeta(u)+\eps}\right),\eta_\eps(P_{r+h,s}u)\right)\right|\, dm_{q}\, dq\\
&+\int_r^{r+h}\int\left|\Gamma_q\left( P_{t,r}^*\left(\frac{\rho_{r+h}}{\psi^\eps_\zeta(u)+\eps}\right)\eta'_\eps(P_{\xi,s}u) e^{f_q-f_r},P_{q,s}u\right)\right|\, dm_{q}\, dq,
\end{align*}
where we used \eqref{heat-def2} and \eqref{ad-heat-def2}, the 2-absolute continuity of $r\mapsto P^*_{t,r}g$, $r\mapsto P_{r,s}u$ by Proposition \ref{prop:prop}, the Lipschitz continuity of $\eta_\eps$, and the Lipschitz continuity of $r\mapsto f_r$. A calculation shows that this term is finite for almost all $s<r<h$.

For the second term in \eqref{eq:splitpsi} note that $|\omega_\eps'(\psi_\eps^r(u))|$ is uniformly bounded for almost all $s<r<t$, and by virtue of the $L^1$-gradient estimate \eqref{l1gradest}

\begin{equation*}
\begin{aligned}
&\sqrt{\Gamma_t(P_{t,r}\eta_\eps(P_{r,s}u))}\leq P_{t,r}\sqrt{\Gamma_r(\eta_\eps(P_{r,s}u))}
\leq P_{t,r}\left(\eta'_\eps(P_{r,s}u)\sqrt{\Gamma_r(P_{r,s}u)}\right),
\end{aligned}
\end{equation*}
which is an $L^2$-function on $(s,t)\times X$. All in all this proves the locally absolute continuity of $\Psi^\eps$.

In the next step we calculate the derivative of $\Psi^\eps(r)$. We compute
\begin{equation}
\begin{aligned}\label{eq: diff}
&\frac1h \int \omega_\varepsilon(\psi_{r+h}^\varepsilon(u))\, d\mu_{r+h}-\int \omega_\varepsilon(\psi_{r}^\varepsilon(u))\, d\mu_r\\
\leq &\frac1h \left(\int P_{t,r+h}^*\left(\frac{\rho_{r+h}}{\psi_\zeta^\eps(u)+\eps}\right)\eta_\eps( P_{r,s}u)\, dm_{r+h}-\int P_{t,r}^*\left(\frac{\rho_{r+h}}{\psi_\zeta^\eps(u)+\eps}\right)\eta_\eps( P_{r,s}u)\, dm_{r}\right)\\
&+ \frac1h \int P_{t,r+h}^*\left(\frac{\rho_{r+h}}{\psi_\zeta^\eps(u)+\eps}\right)(\eta_\eps( P_{r+h,s}u)-\eta_\eps(P_{r,s}u))\, dm_{r+h}\\
&+\frac1h\int_r^{r+h}\int \sqrt{\Gamma_t(\omega_\eps(\psi_q^\eps(u)))}v_q\, d\mu_q\, dq,
\end{aligned}
\end{equation}
where we used \eqref{eq: regular} for the last term.
Taking the limit $h\to0$,  by Proposition \ref{prop:prop} we get for the first term on the right hand side in \eqref{eq: diff}
\begin{align*}
&\lim_{h\to0}\frac1h \left(\int P_{t,r+h}^*\left(\frac{\rho_{r+h}}{\psi_\zeta^\eps(u)+\eps}\right)\eta_\eps( P_{r,s}u)\, dm_{r+h}-\int P_{t,r}^*\left(\frac{\rho_{r+h}}{\psi_\zeta^\eps(u)+\eps}\right)\eta_\eps( P_{r,s}u)\, dm_{r}\right)\\
=& \int \Gamma_r\left(P_{t,r}^*\left(\frac{\rho_{r}}{\psi_r^\eps(u)+\eps}\right),\eta_\eps( P_{r,s}u)\right)\, dm_r\\
 & + \lim_{h\to0}\frac1h \int\left(\frac{\rho_{r+h}}{\psi_\zeta^\eps(u)+\eps}-\frac{\rho_{r}}{\psi_r^\eps(u)+\eps}\right)\left(P_{t,r+h}(\eta_\eps( P_{r,s}u))-P_{t,r}(\eta_\eps( P_{r,s}u))\right)\, dm_t.
\end{align*}
Note that the last term is equal to $0$. Indeed, on the one hand $\frac1h(\frac{\rho_{r+h}}{\psi_\zeta^\eps(u)+\eps}-\frac{\rho_{r}}{\psi_r^\eps(u)+\eps})$ is a converging sequence in $L^1(X)$ due to $\rho\in \cC^1([0,1]; L^1(X))$,  $\omega_\eps'\in\Lip_b([0,M])$, and \eqref{eq: psi}. On the other $P_{t,r+h}(\eta_\eps( P_{r,s}u))-P_{t,r}(\eta_\eps( P_{r,s}u))\to 0$ weakly$^*$ in $L^\infty(X)$ due to \eqref{ad-heat-def2}, \eqref{eq: adjoint}, Lemma \ref{lma: maximum}, and the Banach-Alaoglu theorem.

For the second term on the right hand side in \eqref{eq: diff} it holds
\begin{align*}
&\lim_{h\to0} \frac1h \int P_{t,r+h}^*\left(\frac{\rho_{r+h}}{\psi_\zeta^\eps(u)+\eps}\right)(\eta_\eps( P_{r+h,s}u)-\eta_\eps(P_{r,s}u))\, dm_{r+h}\\
&\leq 
\int P^*_{t,r}\left(\frac{\rho_r}{\psi^\eps_r(u)+\eps}\right)\eta_\eps'(P_{r,s}u)\Delta_rP_{r,s}u\, dm_r\\
 & +\lim_{h\to0} \frac 1h \int P_{t,r+h}^*\left(\frac{\rho_{r+h}}{\psi_\zeta^\eps(u)+\eps}-\frac{\rho_{r}}{\psi_r^\eps(u)+\eps}\right)(\eta_\eps( P_{r+h,s}u)-\eta_\eps(P_{r,s}u))\, dm_{r+h}
\end{align*}
for a.e. $r$, since $\eta'_\eps(P_{r,s}u)$ in $\Lip_b([0,M])$, $\frac1h(P_{r+h,s}u-P_{r,s}u)\to \Delta_rP_{r,s}u$ in $L^2(X)$ for a.e. $r$ and $P_{t,r+h}^*(\frac{\rho_r}{\psi_r^\eps(u)+\eps})\to P^*_{t,r}(\frac{\rho_r}{\psi_r^\eps(u)+\eps})$ weakly$^*$ in $L^\infty(X)$ due to the uniform boundedness. The last term is equal to $0$ since $P_{t,r+h}^*\left(\frac{\rho_{r+h}}{\psi_\zeta^\eps(u)+\eps}-\frac{\rho_{r}}{\psi_r^\eps(u)+\eps}\right)\to 0$ weakly$^*$ in $L^\infty(X)$ by the Banach Alaoglu theorem and since $\frac{\rho_{r+h}}{\psi_\zeta^\eps(u)+\eps}-\frac{\rho_{r}}{\psi_r^\eps(u)+\eps}\to0$ in $L^1(X)$ similarly as above, and $P_{t,r}^*$ is a continuous operator on $L^1(X)$ (Lemma \ref{lma: maximum}).

 For the third term in \eqref{eq: diff} we apply Young's inequality and \eqref{l1gradest} and note that $|\omega_\eps'(\psi_q^\eps(u))|$ and $P_{t,q}(|\eta_\eps'(P_{q,s}u)|^2)$ are uniformly bounded on $(s,t)\times X$. Moreover by virtue of the local 
 Poincar\'e inequality (Theorem \ref{thmlocpoinc})
 \begin{align*}
\Gamma_t(\omega_\eps(P_{t,q}\eta_\eps(P_{q,s}u)))\leq |\omega'_\eps(P_{t,q}\eta_\eps(P_{q,s}u))|^2\frac{||\eta_\eps(P_{q,s}u)||_\infty^2}{2(t-q)}
 \end{align*}
 is a locally integrable function on $(s,t)\times X$. Then the Lebesgue differentiation theorem applies and thus
 \begin{align*}
\lim_{h\to0} \frac1h\int_r^{r+h}\int \sqrt{\Gamma_t(\omega_\eps(\psi_q^\eps(u)))}v_q\, d\mu_q\, dq=\int \sqrt{\Gamma_t(\omega_\eps(\psi_r^\eps(u)))}v_r\, d\mu_r
 \end{align*}
 for a.e. $s<r<t$.

 Summarizing we find by taking the limit in \eqref{eq: diff}
 \begin{align*}
 \frac{d}{dr}\Psi^\varepsilon(r)\leq &\int\Gamma_r\left(P_{t,r}^*\left(\frac{\rho_r}{\psi^\eps_r+\varepsilon}\right),\eta_\eps(P_{r,s}u)\right)+P_{t,r}^*\left(\frac{\rho_r}{\psi^\eps_r+\varepsilon}\right)\eta_\eps'(P_{r,s}u)\Delta_rP_{r,s}u\, dm_r\\
 &+\int \sqrt{\Gamma_t(\omega_\eps(\psi_r^\eps(u)))}v_r\, d\mu_r\\
 =&-\int\eta_\eps''(P_{r,s}u)\Gamma_r(P_{r,s}u)P^*_{t,r}\left(\frac{\rho_r}{\psi_r^\eps+\varepsilon}\right)\, dm_r
 +\int|\omega'_\eps(\psi_r^\eps(u)))| \sqrt{\Gamma_t(\psi_r^\eps(u))}v_r\, d\mu_r,
\end{align*}
where we used integration by parts and the chain rule in the last line.

Applying the gradient estimate \eqref{l1gradest}, using the chain rule twice, and inserting the definitions we compute
\begin{align*}
&\frac{d}{dr}\Psi^\varepsilon(r) \\
\leq &-\int\eta_\eps''(P_{r,s}u)\Gamma_r(P_{r,s}u)P_{t,r}^*\left(\frac{\rho_r}{\psi_r^\eps(u)+\varepsilon}\right)\, dm_r
 +\int \frac{\rho_r}{\psi_r^\eps(u)+\varepsilon} P_{t,r}\left(\eta_\eps'(P_{r,s}u)\sqrt{\Gamma_r(P_{r,s}u)}\right)v_r\, dm_t\\
 =&\int\frac{\alpha\rho_r}{\psi^\eps_r(u)+\varepsilon}\left(-(\alpha-1)P_{t,r}\left((P_{r,s}u+\eps)^\alpha\frac{\Gamma_r(P_{r,s}u)}{(P_{r,s}u+\eps)^{2}}\right)+v_rP_{t,r}\left((P_{r,s}u+\eps)^{\alpha}\frac{\sqrt{\Gamma_r(P_{r,s}u)}}{P_{r,s}u+\eps}\right)\right)\, dm_t\\
 \leq&\int\frac{\alpha\rho_r}{\psi^\eps_r(u)+\varepsilon}\sup_\kappa\{-(\alpha-1)P_{t,r}(P_{r,s}u+\eps)^\alpha\kappa^2+v_rP_{t,r}(P_{r,s}u+\eps)^\alpha\kappa\}\, dm_t
\end{align*}
Calculating the supremum and using \eqref{etaeq} further yields
\begin{align*}
&\frac{d}{dr}\Psi^\varepsilon(r) 
 \leq\int\frac{\alpha\rho_rP_{t,r}(P_{r,s}u+\eps)^\alpha}{\psi^\eps_r(u)+\varepsilon}\frac{v_r^2}{4(\alpha-1)}\, dm_t
\leq\frac{\alpha}{4(\alpha-1)}\int v_r^2\, d\mu_r=\frac{\alpha}{4(\alpha-1)}|\dot\mu_r|^2,
\end{align*}
where we used that $v$ is the minimal velocity density for $\mu$.

Due to the local absolute continuity, integrating from $s$ to $t$ yields
\begin{align*}
 \Psi_\varepsilon(t)-\Psi_\varepsilon(s)\leq \frac{\alpha}{4(\alpha-1)}\int_s^t|\dot\mu_r|^2\, dr.
\end{align*}
Hence, by approximating $W_t^2$-geodesics with regular curves and taking the scaling into account we end up with
\begin{align*}
 \Psi_\varepsilon(t)-\Psi_\varepsilon(s)\leq \frac{\alpha}{4(\alpha-1)(t-s)}W_t(\mu_s,\mu_t)^2.
\end{align*}
We get for $m$-a.e. $x,y\in X$, after letting $\mu_s\to\delta_x$ and $\mu_t\to \delta_y$ with respect to $L^2$-Kantorovich distance, 
\begin{align*}
 &\log\frac{\eta_\eps(P_{t,s}u)(y)}{P_{t,s}\eta_\eps(u)(x)}\leq\frac{\alpha d_t^2(x,y)}{4(\alpha-1)(t-s)}.
\end{align*}
Now we let $\eps\to0$. Since $\eta_\eps(P_{t,s}u)\to (P_{t,s}u)^\alpha$, and $P_{t,s}\eta_\eps(u)\to P_{t,s}(u^\alpha)$ a.e. by monotone convergence we find
\begin{align*}
 &\frac{(P_{t,s}u)^\alpha(y)}{P_{t,s}(u^\alpha)(x)}\leq\exp\left\{\frac{\alpha d_t^2(x,y)}{4(\alpha-1)(t-s)}\right\},
\end{align*}
which is the result for $u\in L^2(X)\cap L^\infty(X)$. The result for general $u$ follows by a truncation argument.
\end{proof}

\subsection{From dimension independent Harnack inequality  to local logarithmic Sobolev inequality}
We assume in this section that $m_t(X)<\infty$ for some and thus for all $t\in(0,T)$.
\begin{thm}
Assume that the Harnack inequality \eqref{eq: correct Harnack} holds. Then for all $u\in\D(S)\cap L^1(X)$ such that $u\geq 0$ the local logarithmic Sobolev inequality holds
\begin{align*}
P_{t,s}(u\log u)-P_{t,s}u\log P_{t,s}u\geq (t-s)\frac{\Gamma_t(P_{t,s}u)}{P_{t,s}u},
\qquad m\text{-a.e..}
\end{align*}

\end{thm}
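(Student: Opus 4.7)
The plan is to linearize the Harnack inequality around $\alpha=1$ and translate the resulting infinitesimal estimate into an $m_t$-a.e.\ bound on $\Gamma_t(P_{t,s}u)$ via the RCD structure. As a first reduction I would restrict to $u\in L^1(X)\cap L^\infty(X)$ with $0<c\le u\le M<\infty$; the general case then follows from the truncation $u_n:=(u\wedge n)\vee(1/n)$, monotone convergence of the entropy, and lower semicontinuity of the Dirichlet form.

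Writing $\alpha=1+\beta$ in \eqref{eq: correct Harnack} and taking logarithms yields, for all $x,y\in X$ and all $\beta>0$,
\begin{align*}
\log P_{t,s}u(y)-\log P_{t,s}u(x)\le F(\beta;x)+\frac{d_t^2(x,y)}{4\beta(t-s)},
\end{align*}
with $F(\beta;x):=\frac{1}{1+\beta}\log P_{t,s}(u^{1+\beta})(x)-\log P_{t,s}u(x)$. Since $u$ is bounded above and bounded below away from zero (so $P_{t,s}u\ge c$ by Lemma \ref{lma: maximum}), differentiation under the propagator is legitimate and $\beta\mapsto F(\beta;x)$ is $C^2$ at $\beta=0$, with $F(0;x)=0$,
\begin{align*}
\partial_\beta F(0;x)=E(x):=\frac{P_{t,s}(u\log u)(x)-P_{t,s}u(x)\log P_{t,s}u(x)}{P_{t,s}u(x)}\ge 0,
\end{align*}
and an $O(\beta^2)$ remainder bounded uniformly in $x$ by a constant depending only on $c$ and $M$. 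For a point $x$ with $E(x)>0$ (the case $E(x)=0$ forces the target inequality trivially there) and $y$ close to $x$, the choice $\beta^\ast=d_t(x,y)/(2\sqrt{E(x)(t-s)})$ minimizes the leading part of the right-hand side to $d_t(x,y)\sqrt{E(x)/(t-s)}$ and leaves an $O(d_t^2(x,y))$ error. Swapping the roles of $x,y$ and invoking continuity of $E$ (a consequence of the continuity of $P_{t,s}$ acting on bounded functions in the RCD setting), I obtain
\begin{align*}
|\log P_{t,s}u(y)-\log P_{t,s}u(x)|\le d_t(x,y)\sqrt{E(x)/(t-s)}+o(d_t(x,y))\quad\text{as }y\to x.
\end{align*}

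The local Lipschitz constant of $\log P_{t,s}u$ in the metric $d_t$ is therefore at most $\sqrt{E(\cdot)/(t-s)}$. Since $(X,d_t,m_t)$ satisfies RCD$(K,\infty)$ and $\log P_{t,s}u\in\D(\E_t)$ under our boundedness hypothesis, the identification of $\Gamma_t$ with the square of the minimal weak upper gradient, together with its domination by the slope, yields $\Gamma_t(\log P_{t,s}u)(x)\le E(x)/(t-s)$ for $m_t$-a.e.\ $x$. Applying the chain rule $\Gamma_t(\log P_{t,s}u)=\Gamma_t(P_{t,s}u)/(P_{t,s}u)^2$ and multiplying through by $(t-s)P_{t,s}u$ gives \eqref{eq:lower local log} for the regularized $u$, whence the general case by the approximation described above. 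The main obstacle is the final conversion of a pointwise slope bound into an a.e.\ bound on $\Gamma_t$: this relies on the RCD identification of $\Gamma_t$ with the minimal weak upper gradient squared and on its domination by the local Lipschitz constant, and one must also verify that the $O(\beta^2)$ remainder from the Taylor expansion is uniform enough that the minimization in $\beta$ genuinely produces an $o(d_t(x,y))$ error.
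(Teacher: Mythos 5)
Your overall strategy -- linearize \eqref{eq: correct Harnack} in $\alpha$ at $\alpha=1$ and convert the resulting infinitesimal estimate into a bound on $\Gamma_t(\log P_{t,s}u)$ -- is the classical smooth-setting argument, and it is genuinely different from the paper's proof, which never works pointwise: there the Harnack inequality is integrated against the optimal coupling of two \emph{absolutely continuous} measures, the second measure is perturbed as $\mu_\tau=g(1-\tau\Delta^g\psi)m_t$ in a weighted Dirichlet form $\E^g$, the Wasserstein cost is controlled by a Hopf--Lax/Kuwada estimate $\limsup_{\tau\to0}\tfrac1{2\tau^2}W_t^2(\mu,\mu_\tau)\le\tfrac12\int\Gamma_t(\psi)g\,dm_t$, and after differentiating in $\tau$ one chooses the test function $\psi_0=-2(t-s)\log P_{t,s}u$ to produce exactly the claimed inequality tested against arbitrary $g\,m_t$.

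The reason the paper takes this detour is precisely where your argument has a genuine gap. The hypothesis gives \eqref{eq: correct Harnack} only for $m$-a.e.\ $x,y$, and $P_{t,s}u$, $P_{t,s}(u^{1+\beta})$ are only equivalence classes; to let $y\to x$ and read off a bound on the slope of $\log P_{t,s}u$ at $x$ you need the inequality for \emph{all} $y$ near $x$ and continuous representatives of these functions. Your justification -- ``continuity of $P_{t,s}$ acting on bounded functions in the RCD setting'' -- is not available here: assumption \textbf{(A1.b)} only makes each static space $(X,d_t,m_t)$ an RCD space and regularizes the static semigroups $H^t_r$, not the time-dependent propagator $P_{t,s}$. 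An $L^\infty\to\mathcal C$ (strong Feller / Lipschitz regularization) property for $P_{t,s}$ would essentially require the gradient estimates \eqref{est-III}/\eqref{L1-grad}, which are among the equivalent statements being characterized, so invoking it is circular. The same missing continuity is needed a second time when you ``swap the roles of $x,y$'': the swapped inequality produces $E(y)$, and passing to $E(x)$ requires continuity of $E$, i.e.\ again continuity of $P_{t,s}(u\log u)$ and $P_{t,s}u$; without it you only control the ascending slope, which is not by itself known to dominate the minimal weak upper gradient. Unless you can first upgrade the a.e.\ Harnack inequality to a pointwise one for continuous versions (which does not follow from \eqref{eq: correct Harnack} alone, since the prefactor degenerates as $\alpha\to1$), the slope-extraction step fails, and an integrated argument along the lines of the paper seems unavoidable.
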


\begin{proof}
Let $u\in L^1(X)\cap L^\infty(X)$ with $u\geq c>0$.
From the Harnack inequality it follows that
\begin{align}\label{integratedharnack}
\int\alpha\log(P_{t,s}u)\, d\mu- \int\log(P_{t,s}(u^\alpha))\, d\nu\leq\frac{\alpha W_t^2(\mu,\nu)}{4(\alpha-1)(t-s)}
\end{align}
holds
for each probability measures $\mu,\nu$ which are absolutely continuous with respect to $m_t$. This follows from integrating \eqref{eq: correct Harnack} with respect to an optimal transport plan.

Now choose $\mu=g m_t$ with $g\geq0$ and $g\in\F\cap L^\infty(X)$. Consider the associated Dirichlet form $\E^g(u):=\int \Gamma_t(u)g \, dm_t$ with heat semigroup $(H_r^g)_{r\geq0}$ and generator $\Delta^g$. We introduce for fixed $\varepsilon>0$ the function 
\begin{align*}
 \psi=\frac1\varepsilon\int_0^\infty H_r^g(\psi_0)\kappa(r/\varepsilon)\, dr,
\end{align*}
 where $\kappa\in\cC_c^\infty(0,\infty)$ with $\kappa\geq0$ and $\int_0^\infty\kappa(r)\, dr=1$ and $\psi_0\in \D(\E^g)\cap L^\infty(gm_t)$. Note that $||\Delta^g\psi||_\infty\leq M$ for some $M\geq0$ and hence $\mu_\tau:=g(1-\tau\Delta^g\psi)m_t$ is a probability measure for all $\tau<1/2M$. First we will show that 
\begin{align}\label{W,G}
\limsup_{\tau\to0}\frac{1}{2 \tau^2}W^2_t(\mu,\mu_\tau)\leq \frac12\int \Gamma_t(\psi)g\, dm_t
 \end{align}
 using the Hopf-Lax semigroup $(Q_r)_{r\geq0}$ with respect to $d_t$. For $\varphi\in\mathcal C_b(X)$ we find for $r\leq \tau$
 \begin{align*}
 \frac{d}{dr}\int Q_r(\varphi)\, d\mu_r\leq &\int(-\frac12\Gamma_t(Q_r(\varphi))(1-\tau\Delta^g\psi)-Q_r(\varphi)\Delta^g\psi)g\, dm_t\\
 \leq& \int(-\frac12\Gamma_t(Q_r(\varphi))(1-\tau M)+\Gamma_t(Q_r(\varphi),\psi)g\, dm_t\\
 \leq &\frac{1}{2(1-\tau M)}\int \Gamma_t(\psi)g\, dm_t.
 \end{align*}
 Integrating on $[0,\tau]$, taking the supremum over all $\varphi$, dividing by $\tau$ and letting $\tau\to0$ yields \eqref{W,G}.
   For $\alpha=1+\tau$, $\tau>0$ \eqref{integratedharnack} reads as
\begin{align}\label{tauformula}
(1+\tau)\int\log(P_{t,s}u)\, d\mu- \int\log(P_{t,s}(u^{1+\tau}))\, d\mu_\tau\leq\left\{\frac{(1+\tau) W_t^2(\mu,\mu_\tau)}{4\tau(t-s)}\right\}.
 \end{align}
 We divide
 by $\tau>0$ and let $\tau\to0$. By \eqref{W,G} the right hand side can be estimated from above by
 \begin{align*}
 \frac1{4(t-s)}\int \Gamma_t(\psi)g\, dm_t.
 \end{align*}
 We claim
 that together with the left hand side this amounts to 
 \begin{align}\label{formula}
 \int \log(P_{t,s}u)\, d\mu-\int \frac{P_{t,s}(u\log u)}{P_{t,s}u}\, d\mu-\int\Gamma_t(\log(P_{t,s}u),\psi)\, d\mu\leq \frac1{4(t-s)}\int\Gamma_t(\psi)\, d\mu.
 \end{align}
 Indeed, it is straight forward to check that $r\mapsto\int \log P_{t,s} u^{1+r}\, d\mu_r$ is absolutely continuous with derivative 
 \begin{align*}
 \Psi(r):=\int\frac{P_{t,s}(u^{1+r}\log u)}{P_{t,s}u^{1+r}}\, d\mu_r-\int \log P_{t,s} u^{1+r}(\Delta_t^g\psi)g\, dm_t.
 \end{align*}
Since $u\geq c>0$ we see that $r\mapsto\Psi(r)$ is continuous. Hence
\begin{align*}
 &\frac1\tau(\int\log(P_{t,s}u)\, d\mu- \int\log(P_{t,s}(u^{1+\tau}))\, d\mu_\tau)
 =-\frac1\tau\int_0^\tau \Psi(r)\, dr\\
 &\xrightarrow{\tau\to0} -\int \frac{P_{t,s}(u\log u)}{P_{t,s}u}\, d\mu-\int\Gamma_t(\log(P_{t,s}u),\psi)\, d\mu.
\end{align*}
Together with \eqref{tauformula} this yields \eqref{formula}.

 Letting $\varepsilon\to0$ we conclude 
  \begin{align*}
 \int \log(P_{t,s}u)\, d\mu-\int \frac{P_{t,s}(u\log u)}{P_{t,s}u}\, d\mu-\int\Gamma_t(\log(P_{t,s}u),\psi_0)\, d\mu\leq \frac1{4(t-s)}\int\Gamma_t(\psi_0)\, d\mu.
 \end{align*}

Now we may choose 
$\psi_0=-2(t-s)\log(P_{t,s}u)$ and obtain
  \begin{align*}
 \int \log(P_{t,s}u)\, d\mu-\int \frac{P_{t,s}(u\log u)}{P_{t,s}u}\, d\mu+(t-s)\int\Gamma_t(\log(P_{t,s}u))\, d\mu\leq 0.
 \end{align*}
Since this holds for all $\mu=g m_t$, we recover the local logarithmic Sobolev inequality
\begin{align*}
P_{t,s}(u\log u)-P_{t,s}u\log P_{t,s}u\geq (t-s)\frac{\Gamma_t(P_{t,s}u)}{P_{t,s}u},
\end{align*}
for all $u\in L^1(X)\cap L^\infty(X)$ with $u\geq c>0$. We obtain the estimate for all nonnegative
$u\in\D(S)\cap L^1(X)$ by a truncation argument.
\end{proof}


\section{The logarithmic Harnack inequality}

We already noted in Remark 1.5, that the dimension-independent Harnack inequality (for some exponent $\alpha$) implies the logarithmic Harnack inequality. 

This section is devoted to prove that
the logarithmic Harnack inequality  implies the dynamic Bochner inequality. To do so, in addition to our standing assumptions, in particular, the validity of a RCD$(K,\infty)$-condition for each $(X,d_t,m_t)$ and a log-Lipschitz dependence on $t$ for $d_t$ and $m_t$, we have to impose various  continuity assumptions (all of which are satisfied in the static case).

We assume that $m_t(X)<\infty$ for $t\in(0,T)$, (\textbf{A2.a-c}), and (\textbf{A3})
hold.
Moreover, writing $u_{q,s}=P_{q,s}u$, we assume that
%
\begin{itemize}
 \item for $u\in \F\cap\D(\Delta)$ the functions
 \begin{align}
&q\mapsto u_{q,s}, \quad s\mapsto \Delta_su, \quad
q\mapsto \Delta_q u_{q,s}
\label{ass1}\tag{{\bf A5.a}}
\end{align}
are continuous in $\F\cap L^1(X)$;
\item for $w,w_q\in\D(\Delta)$ 
as $q\to t$, and $\Delta_tw_q\to \Delta_tw$ in $L^1(X)$
\begin{align}
 &\Delta_qP_{t,q}^*w_q\to \Delta_t w\quad\mbox{in }L^1(X)
 \label{ass4}\tag{{\bf A5.b}}.
\end{align}
\end{itemize}

Let us emphasize that (\textbf{A5.a+b}) are always satisfied in the static case.


\begin{thm}\label{thm: logHartosrf}
If for all nonnegative $u\in L^1(X)\cap L^\infty(X)$ and $s<t$ the logarithmic Harnack inequality
\begin{align}\label{eq: correct log Harnack}
P_{t,s}(\log u)(x)\leq \log(P_{t,s} u)(y)+\frac{d_t^2(x,y)}{4(t-s)}
\end{align}
holds for $m$-a.e. $x,y\in X$, then the pointwise dynamic Bochner inequality holds at time $t$, i.e. 
\begin{align*}
 \frac12\int\Gamma_t(f)\Delta_t g\, dm_t+\int(\Delta_t f)^2g+\Gamma_t(f,g)\Delta_t f\, dm_t\geq\frac12 \int (\partial_t \Gamma_t)(f) g\, dm_t
\end{align*}
for all $f\in\D(\Delta_t)\cap L^\infty(X)$ such that $\Gamma_t(f)\in L^\infty(X)$ and all nonnegative $g\in\D(\Delta_t)\cap L^\infty(X)$.
\end{thm}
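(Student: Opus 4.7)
The plan is to reduce the logarithmic Harnack inequality to the reverse local Poincar\'e bound at every pair $s<t$, and then to invoke Theorem \ref{thm poinc} together with the passage from integrated to pointwise Bochner carried out in Section 2.4 to conclude. Following the pattern of Section 4.2, I would integrate the log-Harnack inequality applied to $u = e^{\eps h}$ (for $h \in \F \cap L^\infty(X)$) against an optimal transport plan between $\nu = g\,m_t$ and $\mu_\tau := (1-\tau\Delta^g\psi) g\,m_t$, where $g \geq 0$ lies in $\F \cap L^\infty(X)$, $\Delta^g$ is the generator of the weighted Dirichlet form $\E^g(v) := \int \Gamma_t(v) g\, dm_t$, and $\psi$ is a mollified function in $\D(\E^g) \cap L^\infty(g m_t)$. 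The Hopf--Lax argument already established in Section 4.2 gives $\limsup_{\tau \to 0} W_t^2(\mu_\tau, \nu)/(2\tau^2) \leq \tfrac{1}{2}\int \Gamma_t(\psi) g\,dm_t$.

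Using the second-order expansion $\log P_{t,s}(e^{\eps h}) = \eps P_{t,s}h + \tfrac{\eps^2}{2}(P_{t,s}(h^2) - (P_{t,s}h)^2) + O(\eps^3)$ together with the integration-by-parts identity $-\int P_{t,s}h \cdot g\Delta^g\psi\,dm_t = \int \Gamma_t(P_{t,s}h,\psi) g\,dm_t$, the integrated log-Harnack inequality becomes
$$\eps\tau\, D \;\leq\; \tfrac{\eps^2}{2}\, V + \tfrac{\tau^2}{4(t-s)}\, C + O(\eps^3) + o(\tau^2),$$
where $D := \int \Gamma_t(P_{t,s}h,\psi) g\,dm_t$, $V := \int [P_{t,s}(h^2)-(P_{t,s}h)^2] g\,dm_t \geq 0$, and $C := \int \Gamma_t(\psi) g\,dm_t$. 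Setting $\tau = \sigma\eps$ and letting $\eps \to 0$ produces $\sigma D \leq V/2 + \sigma^2 C/(4(t-s))$ for every $\sigma \in \R$; requiring the resulting quadratic in $\sigma$ to be everywhere nonnegative forces the discriminant bound $D^2 \leq VC/(2(t-s))$. Taking $\psi$ to be a semigroup mollification of $P_{t,s}h$ and passing to the limit via \textbf{(A5.a,b)} lets me effectively substitute $\psi = P_{t,s}h$, in which case $C = D$ and the Cauchy--Schwarz bound collapses to $D \leq V/(2(t-s))$. Since $g$ is arbitrary, this yields the reverse local Poincar\'e inequality $2(t-s)\,\Gamma_t(P_{t,s}h) \leq P_{t,s}(h^2) - (P_{t,s}h)^2$ ($m$-a.e.) for all $s<t$ and $h \in \F \cap L^\infty(X)$. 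Theorem \ref{thm poinc} then delivers the dynamic Bochner inequality integrated in time, and its pointwise version at $t$ follows from the continuity arguments of Section 2.4 via \textbf{(A2.b,c)}.

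The hard part will be the uniform control of the double limit $\eps, \tau \to 0$ along the ray $\tau = \sigma\eps$: the $O(\eps^3)$ and $o(\tau^2)$ error terms must be bounded independently of the other parameter, and the admissibility $1 - \tau\Delta^g\psi \geq 0$ must be preserved by restricting $\psi$ to bounded mollified potentials and $\tau$ to a small interval depending on $\|\Delta^g\psi\|_\infty$. A secondary technical issue is the mollification step replacing $\psi$ by $P_{t,s}h$, for which $P_{t,s}h$ must be regular enough to lie in the appropriate closure of $\D(\E^g) \cap L^\infty(g m_t)$; the assumptions \textbf{(A3)} and \textbf{(A5)} are designed to make both of these steps work. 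After the reverse local Poincar\'e inequality is in hand, the remaining implication is a routine invocation of the machinery of Section 2.
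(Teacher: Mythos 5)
Your route is genuinely different from the paper's and its core is sound. The paper proves the implication directly: it integrates \eqref{eq: correct log Harnack} for $u=e^f$ against the optimal coupling of $\mu_t=e^{-2f}g\,m_t$ and $\mu_s=(P^g_{t-s}e^{-2f})g\,m_t$, controls the Wasserstein term by Kuwada's lemma, and then carries out a delicate time-differentiation of the resulting functional along the propagator and the weighted semigroup (this is where \textbf{(A5.a+b)}, \textbf{(A2.a--c)}, \textbf{(A3)} and Lemmas \ref{lma: differentiation}, \ref{regofexp} enter), finally letting $s\to t$. You instead linearize: applying \eqref{eq: correct log Harnack} to $u=e^{\eps h}$, using $P_{t,s}\mathbf{1}=\mathbf{1}$ and $m_t(X)<\infty$, integrating against the plan between $g\,m_t$ and $(1-\tau\Delta^g\psi)g\,m_t$, and reusing the Hopf--Lax estimate \eqref{W,G} from Section 4.2, you obtain $\sigma D\le V/2+\sigma^2C/(4(t-s))$ and hence, with $\psi$ a mollification of $P_{t,s}h$, the reverse local Poincar\'e inequality \eqref{lower local Poincare}; Theorem \ref{thm poinc} and Section 2.4 then give \eqref{est-IV} and \eqref{ptwBochner0}. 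This is an attractive simplification: it mirrors the Euclidean linearization argument, and it avoids the heavy continuity analysis of Section 5 altogether --- indeed your appeal to \textbf{(A5.a,b)} in the mollification step is unnecessary, since the substitution $\psi=P_{t,s}h$ only requires convergence of the static weighted-semigroup mollifier in $\D(\E^g)$, exactly as at the end of Section 4.2. Also, the double limit you single out as the hard part is in fact harmless: the $O(\eps^3)$ error is uniform in $\tau$ (it comes from the uniform expansion of $\log P_{t,s}e^{\eps h}$ for bounded $h$) and the $o(\tau^2)$ in \eqref{W,G} does not involve $\eps$, so the passage along $\tau=\sigma\eps$ decouples.

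The one genuine caveat concerns quantifiers. Your chain consumes the reverse local Poincar\'e inequality --- hence the log-Harnack hypothesis --- for pairs $(s',t')$ with \emph{varying} terminal time $t'<t$: Theorem \ref{thm poinc} produces the integrated Bochner inequality on $(S,T)$ only from the reverse Poincar\'e inequality at intermediate pairs, and the argument of Section 2.4 then needs this with $T=t$. The statement of Theorem \ref{thm: logHartosrf}, and the paper's proof, use \eqref{eq: correct log Harnack} only with the fixed terminal time $t$ at which the pointwise Bochner inequality is asserted. So as written your argument proves the implication under the stronger hypothesis that the log-Harnack inequality holds for all terminal times up to $t$; this suffices for the equivalences in Theorem \ref{mainthm}, where \eqref{logHarn} is assumed for all $0<s<t<T$, but it does not recover the fixed-$t$ statement of Theorem \ref{thm: logHartosrf}. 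If you want the sharper version, you must either localize your argument in the terminal time or revert to the paper's direct differentiation strategy.
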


\begin{proof}

Let us introduce some function $g$ satisfying $C\geq g\geq c>0$.  Moreover we will assume that $g\in\D(\Delta_t)\cap \Lip(X)$ such that $\Delta_t g\in \F$. We define the Cheeger energy $\frac12\E^g_t$ associated with $d_t$ and finite measure $g m_t$. The operator $\Gamma_t( f)$ is invariant under this perturbations, hence $\Gamma_t^g (f)=\Gamma_t( f)$ and $\D(\E^g_t)=\F$. We refer to \cite[Section 4]{agscalc} for these facts. This leads to the following integral representation of $\E^g_t$
\begin{align*}
\E^g_t(f)=\int \Gamma_t(f)g\, dm_t,
\end{align*}
which makes it a symmetric bilinear form. We denote the associated (Markovian) semigroup by $P_s^g$ and its generator by $\Delta_t^g$, which satisfies the following integration by parts formula
\begin{align*}
\int \Delta_t^g f h g\, dm_t=-\int \Gamma_t(f,h)g\, dm_t
\end{align*}
for all $f\in \D(\Delta_t^g)$ and $h\in \D(\E_t^g)$. Since $\log g\in \F$ this can be rewritten into 
\begin{align*}
\Delta_t^g=\Delta_t+\Gamma_t(\log g,\cdot)
\end{align*}
and thus $\D(\Delta_t)\subset \D(\Delta_t^g)$.

Let $f,\Delta_t f\in D(\Delta_t)\cap \Lip_b(X)$. Then by Lemma \ref{regofexp} $u:=e^f\in \D(\Delta_t)\cap \Lip_b(X)$ with $\Delta_t e^f,\Delta^g_t e^f\in L^\infty(X)\cap\F$ and $u\geq e^{-||f||_\infty}=:\eps>0$.

For $s\le t$ we set 
$$v_s= P^g_{t-s}e^{-2f}\quad\mbox{and}\quad \mu_s= v_s g m_t.$$
Note that $v_s\in \D(\Delta_t^g)\cap L^\infty(X)$ for all $s\leq t$ by Lemma \ref{regofexp}.
Without restriction, we may assume that $\mu_t$, and hence $\mu_s$ for every $s<t$, is a probability measure. Otherwise, simply replace $f$ by $f+C$ for a suitable constant $C$.

Assume that  the logarithmic Harnack inequality holds for the function $u=e^f$. We integrate the inequality w.r.t.\ the $W_t$-optimal coupling of $\mu_t$ and $\mu_s$ to obtain for any $s<t$
%
\begin{align}\label{infdimHarnackbc}
\int P_{t,s}\log u\, d\mu_s-\int\log P_{t,s}u\, d\mu_t\leq\frac1{4(t-s)}W_t^2(\mu_t,\mu_s).
\end{align}

\medskip

Consider the map $r\mapsto \int P_{t,r}\log P_{r,s}u\, d\mu_r$. This map is absolutely continuous since for a.e. $s<r_1<r_2<t$
\begin{align*}
\left|\int P_{t,r_2}\log P_{r_2,s}u\, d\mu_{r_2}-\int P_{t,r_1}\log P_{r_1,s}u\, d\mu_{r_1}\right|
\leq&\left|\int_{r_1}^{r_2}\int \Gamma_r(\log u_{r_2},P_{t,r}^*(v_{r_2}g))\, dm_{r}\, dr\right|\\
&+\frac12\int_{r_1}^{r_2}\int|\Delta_r u_r|^2\, dm_{r_1}\, dr+\frac{(r_2-r_1)}{2\eps^2}||P_{t,r_1}^*(v_{r_2}g)||_2^2\\
&+\left|\int_{r_1}^{r_2}\int P_{t,r_1} \log u_{r_2}(\Delta_t^g v_{r})g\, dm_{t}\, dr\right|.
\end{align*}
Hence for the left hand side of \eqref{infdimHarnackbc} we find by differentiation
\begin{align*}
&\int P_{t,s}\log u\, d\mu_s-\int\log P_{t,s}u\, d\mu_t=-\int_s^t\frac{d}{dr}\int P_{t,r}\log P_{r,s}u\, d\mu_r\, dr\\
=&\int_s^t\int P_{t,r}\Delta_r\log P_{r,s}u-P_{t,r}\frac{\Delta_r P_{r,s}u}{P_{r,s}u}-\Gamma_t(P_{t,r}\log P_{r,s}u,\log v_{r})\, d\mu_r\, dr\\
=&-\int_s^t\int P_{t,r}\Gamma_r(\log P_{r,s}u)+\Gamma_t(P_{t,r}\log P_{r,s}u,\log v_{r})\, d\mu_r\, dr
\end{align*}
and for the right hand side Kuwada's Lemma (\cite[Lemma 6.1]{agscalc}) yields 
\begin{align*}
\frac1{4(t-s)}W_t^2(\mu_s,\mu_t)\leq
\frac14\int_s^t\int\Gamma_t(\log  v_{r})\, d\mu_r\, dr. 
\end{align*}
Hence \eqref{infdimHarnackbc} can be rewritten as follows
\begin{align}\label{infdimHarnackbc2}
\int_s^t\int-P_{t,r}\Gamma_r(\log P_{r,s}u)-\Gamma_t(P_{t,r}\log P_{r,s}u,\log v_{r})-\frac14\Gamma_t(\log v_{r})\, d\mu_r\, dr\leq 0.
\end{align}

\medskip

Now let us consider the map
\begin{align*}
r\mapsto&\int-P_{t,r}\Gamma_r(\log P_{r,s}u)-\Gamma_t(P_{t,r}\log P_{r,s}u,\log v_{r})-\frac14\Gamma_t(\log v_{r})\, d\mu_r\\
=:& I(r)+II(r)+III(r).
\end{align*}
From Lemma \ref{lma: differentiation} we know that the map $r\mapsto III(r)$ is absolutely continuous with derivative
\begin{align*}
\frac{d}{dr}III(r)=&\int \Big(\frac12\Gamma_t(\log v_{r},\Delta_t^g v_{r})-\frac14\Gamma_t(\log v_{r}){\Delta_t^g v_{r}}\Big){g}\, dm_t\\
=&\frac12\int\Gamma_t\left(\log v_{r},\frac{\Delta_t^g v_{r}}{v_{r}}\right)
-\frac14\Gamma_t\bigg(\Gamma_t(\log v_{r}),\log v_{r}\bigg)\, d\mu_{r}.
\end{align*}
For $I$ we calculate for a.e. $r_1<r_2$
\begin{align*}
|I(r_1)-I(r_2)|\leq& \left|\int_{r_1}^{r_2}\int \frac{\Gamma_{r_2}(u_{r_2,s})}{u_{r_2,s}^2}\Delta_rP_{t,r}^*(v_{r_2}g)\, dm_rdr\right|\\
&+\left|\int\left(\frac{\Gamma_{r_2}(u_{r_2,s})}{u_{r_2,s}^2}-\frac{\Gamma_{r_1}(u_{r_1,s})}{u_{r_1,s}^2}\right)P_{t,r_1}^*(v_{r_2}g)\, dm_{r_1}\right|\\
&+\left|\int_{r_1}^{r_2}\int P_{t,r_1}\left(\frac{\Gamma_{r_1}(u_{r_1,s})}{u_{r_1,s}^2}\right)\Delta_t^g v_{r}g\, dm_{r_1}\, dr\right|.
\end{align*}
The second term of this subdivision can be estimated as follows
\begin{align*}
&\left|\int\left(\frac{\Gamma_{r_2}(u_{r_2,s})}{u_{r_2,s}^2}-\frac{\Gamma_{r_1}(u_{r_1})}{u_{r_1}^2}\right)P_{t,r_1}^*(v_{r_2}g)\, dm_{r_1}\right|\\
\leq &\frac{C(r_2-r_1)}{\eps^2}\left|\int \Gamma_{r_1}(u_{r_2,s})P_{t,r_1}^*(v_{r_2}g)\, dm_{r_1}\right|\\
&+\frac1{\eps^2}\left|\int_{r_1}^{r_2}\int \Delta_ru_{r,s}\Delta_{r_1}(u_{r_2,s}+u_{r_1,s})P_{t,r_1}^*(v_{r_2}g)\, dm_{r_1}\, dr\right|\\
&+\frac1{\eps^2}\left|\int_{r_1}^{r_2}\int \Delta_ru_{r,s}\Gamma_{r_1}(P_{t,r_1}^*(v_{r_2}g),u_{r_2}+u_{r_1,s})\, dm_{r_1}\, dr\right|\\
&+\frac1{\eps^3}\left|\int_{r_1}^{r_2}\int \Gamma_{r_1}(u_{r_1,s})\Delta_ru_{r,s} P_{t,r_1}^*(v_{r_2}g)\, dm_{r_1}\, dr\right|.
\end{align*}
For the almost everywhere derivative we obtain by eventually using Proposition \ref{prop:prop}, Lemma \ref{lma: maximum}, Lemma \ref{lma: con delta}, \eqref{a priori grad}, \eqref{Laplace cont}, \eqref{eq: cont derivative}, and \eqref{eq: cont bounded}.
\begin{align*}
\frac{d}{dr}I(r)=&-\int \frac{\Gamma_r(u_{r,s})}{u_{r,s}}\Delta_r P^*_{t,r}(v_rg)\, dm_r
+\int (\partial_r\Gamma_r)(\log u_{r,s})P_{t,r}^*(v_rg)\, dm_r
-\int P_{t,r}\left(\frac{\Gamma_r(u_{r,s})}{u_{r,s}^2}\right)\Delta_t^gv_rg\, dm_t\\
&+2\int\Gamma_r(\Delta_r u_{r,s},u_{r,s})\frac{P^*_{t,r}(v_rg)}{u_{r,s}^2}\, dm_r-2\int\Gamma_r(u_{r,s})\frac{P^*_{t,r}(v_rg)}{u_{r,s}^3}\Delta_r u_{r,s}\, dm_r.
\end{align*}
Finally for $II$ we argue similarly as for $I$ and prove local absolute continuity by
\begin{align*}
|II(r_1)-II(r_2)|\leq \left|\int_{r_1}^{r_2}\int \log u_{{r,s}_2}\Delta_rP_{t,r}^*(\Delta_t^g v_{r_2}g)\, dm_r\, dr\right|\\
+\frac1\eps\left|\int_{r_1}^{r_2}\int \Delta_ru_{r,s}P_{t,r_1}^*(\Delta_t^g v_{r_2}g)\, dm_{r_1}\, dr\right|\\
+\left|\int_{r_1}^{r_2}\int\Gamma_t(P_{t,r_1}\log u_{r_1,s},\Delta_t^g v_{r})g\, dm_t\, dr\right|.
\end{align*}
For the almost everywhere derivative we obtain by eventually using Proposition \ref{prop:prop} and Lemma \ref{lma: maximum}
\begin{align*}
\frac{d}{dr}II(r)=\int \Gamma_t(P_{t,r}\log u_{r,s},\Delta_t^gv_r) g\, dm_t +\int\Delta_t P_{t,r}\log u_{r,s}\, \Delta_t^g v_r\,  g\, dm_t-\int P_{t,r}\frac{\Delta_r u_{r,s}}{u_{r,s}}\, \Delta_t^gv_r\, g \, dm_t.
\end{align*}

Thus $r\mapsto I(r)+II(r)+III(r)$ is absolutely continuous and we rewrite \eqref{infdimHarnackbc2} as 
\begin{equation}
\begin{aligned}\label{infdimHarnackb2c}
&\int_s^t\int_r^t\frac{d}{dq}\int P_{t,q}\Gamma_q(\log u_{q,s})+\Gamma_t(P_{t,q}\log u_{q,s},\log v_{q})+\frac14\Gamma_t(\log v_{q})\, d\mu_{q}\, dq\, dr\\
\leq& \int_s^t\int \Gamma_t(\log u_{t,s})+\Gamma_t(\log u_{t,s},\log  v_t)+\frac14\Gamma_t(\log  v_t)\, d\mu_t\, dr\\
=& (t-s)\int \Gamma_t(\log u_{t,s})+\Gamma_t(\log u_{t,s},\log  v_t)+\frac14\Gamma_t(\log  v_t)\, d\mu_t,
\end{aligned}
\end{equation}
where the right hand side comes from the boundary term $I(t)+II(t)+III(t)$.

Recall that $\mu_q= v_q\,g\, m_t$.
Then the term on the LHS of \eqref{infdimHarnackb2c} takes the form
\begin{align}
&\int_s^t\int_r^t\frac{d}{dq}\int\Big[P_{t,q}\Gamma_q(\log u_{q,s})+\Gamma_t(P_{t,q} \log u_{q,s},\log v_{q})+\frac14\Gamma_t(\log v_{q})\Big]\,  v_q\,g\, d m_t\, dq\, dr\nonumber\\
&=\int_s^t\int_r^t\Big[\int
-\Gamma_q(\log u_{q,s})\Delta_q P_{t,q}^*\big(v_q\,\,g\big)+\Big((\partial_q\Gamma_q)(\log u_{q,s})+2\Gamma_q\big(\log u_{q,s},\frac1{u_{q,s}}\Delta_q u_{q,s}\big)\Big)\,
P_{t,q}^*\big(v_q\,\,g\big)\,d m_q\nonumber\\
&\quad+\int
\Big(\Gamma_t(P_{t,q}\log u_{q,s},\Delta^g_t v_{q})
+\frac12\Gamma_t\Big(\log v_{q},\frac{\Delta^g_t v_{q}}{v_{q}}\Big)\, v_q+\frac14\Gamma_t(\log v_{q})\,
\Delta_t^g v_q\Big)\,\,g\,d m_t
\Big] dq\, dr\nonumber\\
&=:\int_s^t\int_r^t\Psi(q)\, dq\, dr=\int_s^t(t-q)\Psi(q)\, dq.
\label{Psi}
\end{align}

We decompose $\Psi$ into five terms and verify the continuity of each of them. For the first one,
\begin{align*}
\Psi_1(q):=-\int\Gamma_q(\log u_{q,s})\,\Delta_qP_{t,q}^*\big(v_q\,\,g\big)\,d m_q.
\end{align*}
continuity follows from
the fact that $q\mapsto\Gamma_q(\log u_{q,s})$ is weak$^*$-continuous in $L^\infty(X)$ by \eqref{ass1} and \eqref{a priori grad}, and $q\mapsto \Delta_q P_{t,q}^*(v_q g)$ is continuous in $L^1(X)$ by assumption \eqref{ass4} together
with the fact that $q\mapsto \Delta_t(v_q\,g)$ is continuous in $L^1(X)$.

%
%


Continuity of the second one,
\begin{align*}
\Psi_2(q):=\int(\partial_q\Gamma_q)(\log u_{q,s})\,P_{t,q}^*\big(v_q\,\,g\big)\,d m_q,
\end{align*}
follows from  $L^1$-continuity of $q\mapsto \partial_q\Gamma_q(\log u_{q,s})$, as requested in assumption \textbf{(A2.c)}, \textbf{(A3)}, and the weak$^*$-continuity  of $q\mapsto P_{t,q}^*\big(v_q\,\,g\big)$ in $L^\infty(X)$, resulting from \eqref{ass4} together with the uniform boundedness in $L^\infty(X)$.

For the third one,
\begin{align*}
\Psi_3(q)&:=2\int \Gamma_q\Big(\log u_{q,s},\frac1{u_{q,s}}\Delta_q u_{q,s}\Big)\,
P_{t,q}^*\big(v_q\,\,g\big)\,d m_q
\end{align*}
assumptions \textbf{(A2.b)}, \textbf{(A3)} and \eqref{ass1} yield continuity of $q\mapsto\Gamma_q\Big(\log u_{q,s},\frac1{u_{q,s}}\Delta_q u_{q,s}\Big)$ in $L^1(X)$ combined with \eqref{a priori grad} and dominated convergence. Together with the weak$^*$-continuity of 
$q\mapsto P_{t,q}^*\big(v_q\,\,g\big)$ in $L^\infty(X)$, this yields the claim.

The fourth term,
\begin{align*}
\Psi_4(q)&:=\int
\Gamma_t(P_{t,q}\log u_{q,s},\Delta^g_t v_{q})
\,\,g\,d m_t
\end{align*}
is continuous since $q\mapsto P_{t,q}\log u_{q,s}$ is continuous in $\F$ by \eqref{ass1} and \eqref{a priori grad}, and $q\mapsto \Delta^g_tv_q$ is continuous in $\F$ by Lemma \ref{regofexp}.

The final term 
\begin{align*}
\Psi_5(q)&:=\int\Big[
\frac12\Gamma_t(\log v_{q},\frac{\Delta^g_t v_{q}}{v_{q}})\, v_q+\frac14\Gamma_t(\log v_{q})\,
\Delta^g_t v_q\Big]\,\,g\,d m_t\\
&=\int\Big[-\frac1{2 v_q}\big(\Delta^g_t v_{q}\big)^2+\frac14\Gamma_t(\log v_q)\,\Delta^g_t v_{q}\Big]\,\,g\,d m_t
\end{align*}
is always continuous in $q$ without any extra assumption.

Similarly one computes the right hand side of \eqref{infdimHarnackb2c}. Recalling that $\log v_t=-2f$:
\begin{align}
&\frac1{t-s}\int\Big[\Gamma_t(\log u_{t,s})+\Gamma_t(\log u_{t,s} ,\log v_t)+\frac14\Gamma_t(\log  v_t)\Big]\, d\mu_t
=\frac1{t-s}\int\Gamma_t\Big(\log u_{t,s} -f\Big)\, d\mu_t
\nonumber\\
&=\frac{1}{t-s}\int_s^t\partial_q\int\Gamma_t\Big(\log u_{q,s} -f\Big)
\, d\mu_t\, dq\nonumber\\
&=\frac2{t-s}\int_s^t\int 
\Gamma_t\Big(\log u_{q,s} -f,
\frac{\Delta_q u_{q,s}}{u_{q,s}}
\Big)
\, d\mu_t\, dq\nonumber.
\end{align}
Note that by the continuity of $q\mapsto \log u_{q}$ in $\F$ and the continuity of
$q\mapsto \frac{\Delta_q u_{q}}{u_{q,s}}$ in $\F$ by virtue of \eqref{ass1}, \eqref{a priori grad} and the fact that $u\geq\eps$, the map $q\mapsto\int 
\Gamma_t\Big(\log u_{q,s} -f,
\frac{\Delta_q u_{q,s}}{u_{q,s}}
\Big)\, d\mu_t$ is continuous. Then by the Lebesgue differentiation theorem and the continuity discussion above we deduce from \eqref{infdimHarnackb2c} that 
 (recalling that $u=e^f$)
\begin{align*}
\Psi(s)=&\int
-\Gamma_s(f)\Delta_sP_{t,s}^*\big(v_s\,\,g\big)+\Big((\partial_s\Gamma_s)(f)+2\Gamma_s\big(f,\frac1{e^f}\Delta_s e^f\big)\Big)\,
P_{t,s}^*\big(v_s\,\,g\big)\,d m_s\nonumber\\
&\quad+\int\Big(
\Gamma_t(P_{t,s} f,\Delta^g_t v_{s})
+\frac12\Gamma_t(\log v_{s},\frac{\Delta^g_t v_{s}}{v_{s}})\, v_s+\frac14\Gamma_t(\log v_{s})\,
\Delta_t^g v_s\Big)\,\,g\,d m_t\leq 0.
\end{align*}

Then, letting $s\to t$, by continuity we have (recalling that $v_t=e^{-2f}$)
\begin{align*}
\int\Big[\Gamma_t(f)
\Delta_t(e^{-2f}g)
-((\partial_t\Gamma_t)(f)+2\Gamma_t(\Delta_t f,f))e^{-2f}g\Big]\, dm_t\geq 0.
\end{align*}
Choose $g=(\tilde g+\eps)e^{2f}$, where $\tilde g\in \Lip_b(X)\cap \D(\Delta_t)$ with $\Delta_t \tilde g\in \F$. Then $g\in\D(\Delta_t)\cap\Lip(X)$ such that $\Delta_t g\in\F$ by Lemma \ref{regofexp} and \cite[Theorem 3.4]{savare}, and there exists constants $c,C$ such that $0<c\leq g\leq C $. With this choice we obtain
\begin{align*}
\int\Big[\Gamma_t(f)
\Delta_t\tilde g
-((\partial_t\Gamma_t)(f)+2(\Delta_t f)^2\tilde g+2\Gamma_t(f,\tilde g)\Delta_t f\Big]\, dm_t\geq 0
\end{align*}
for all $f,\Delta_t f\in D(\Delta_t)\cap \Lip_b(X)$ and nonnegative $\tilde g\in \Lip_b(X)\cap \D(\Delta_t)$ with $\Delta_t \tilde g\in \F$. The result for general
$f\in D(\Delta_t)\cap \Lip_b(X)$ and nonnegative $\tilde g\in L^\infty(X)\cap \D(\Delta_t)$ follows by approximation with the standard $t$-semigroup mollifier from Definition \ref{molli}.
\end{proof}

\begin{lma}\label{lma: differentiation}
Let $(X,d,m)$ be an {\rm RCD}$(K,\infty)$-space.
Let $g\in \Lip_b(X)$ satisfying $C\geq g\geq c>0$.
Let $v\in\Lip_b(X)\cap\D(\Delta)$ such that $ \Delta^g v\in L^\infty(X)\cap\F$. Moreover let $\psi\in \cC^2(\Im(v))$. Then for $ v_r=P_r^g v$ the map
$r\mapsto \int \Gamma(u_r)\psi(u_r)g\, dm$
is absolutely continuous and
\begin{align*}
\frac{d}{dr}\int \Gamma(u_r)\psi(u_r)g\, dm=\int(2\Gamma(u_r,\Delta^g u_r)\psi(u_r)+\Gamma(u_r)\psi'(u_r)\Delta^g u_r )g\, dm 
\end{align*}
for a.e. $r\geq0$.

\end{lma}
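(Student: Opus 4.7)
My plan is to verify that the formal calculation
\[
\frac{d}{dr}\int \Gamma(u_r)\psi(u_r)g\,dm = \int \big[2\Gamma(u_r,\partial_r u_r)\psi(u_r)+\Gamma(u_r)\psi'(u_r)\partial_r u_r\big]g\,dm
\]
followed by the identification $\partial_r u_r = \Delta^g u_r$ can be made rigorous, by (a) establishing enough regularity of $u_r = P_r^g v$ along the flow to make every term well-defined, and (b) obtaining a locally uniform bound on the difference quotients that justifies absolute continuity and passage to the limit.

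Step 1 (regularity of $u_r$). Since $v, \Delta^g v \in L^\infty(X)\cap\F$, semigroup theory for the symmetric Markovian form $\E^g$ gives $u_r \in \D(\Delta^g)$ for every $r\ge 0$ with $\partial_r u_r = \Delta^g u_r$ in $L^2$. Because $P_r^g$ commutes with $\Delta^g$ on $\D(\Delta^g)$ one has $\Delta^g u_r = P_r^g(\Delta^g v)$, hence $\Delta^g u_r \in L^\infty\cap\F$ with $\|\Delta^g u_r\|_\infty \le \|\Delta^g v\|_\infty$ and $\E^g(\Delta^g u_r)\le \E^g(\Delta^g v)$ by Markovianity and contractivity. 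Similarly $\|u_r\|_\infty \le \|v\|_\infty$. Finally, since $v\in \Lip_b(X)$ and $g$ is bounded Lipschitz with $c\le g\le C$, the underlying RCD$(K,\infty)$ structure transfers to a reverse local Poincaré (equivalently an $L^2$-gradient estimate) for $P_r^g$, yielding $\|\Gamma(u_r)\|_\infty$ bounded uniformly for $r$ in any bounded interval.

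Step 2 (difference quotient and absolute continuity). Write
\[
\Gamma(u_{r+h})\psi(u_{r+h})-\Gamma(u_r)\psi(u_r) = \Gamma(u_{r+h}-u_r,u_{r+h}+u_r)\psi(u_{r+h}) + \Gamma(u_r)\big[\psi(u_{r+h})-\psi(u_r)\big].
\]
Divide by $h$, multiply by $g$ and integrate. The image $\Im(v)\subset [-\|v\|_\infty,\|v\|_\infty]$ is compact, so $\psi,\psi'$ are uniformly bounded on the range of every $u_r$. Combining Cauchy--Schwarz in $\E^g$ with the uniform $L^\infty$ bound on $\Gamma(u_r)$ from Step~1, the $L^2$-bound $\|(u_{r+h}-u_r)/h\|_{L^2}\le \sup_{r'\in[r,r+h]}\|\Delta^g u_{r'}\|_{L^2}\le\|\Delta^g v\|_{L^2}$, and $\|(\psi(u_{r+h})-\psi(u_r))/h\|_{L^2}\le \|\psi'\|_\infty \|\Delta^g v\|_{L^2}$, yields a bound on the difference quotient uniform in $r$ on compact intervals. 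This gives local absolute continuity.

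Step 3 (identifying the limit). For a.e.\ $r$ one has $(u_{r+h}-u_r)/h \to \Delta^g u_r$ in $L^2$; moreover $u_{r+h}\to u_r$ in $\F$ (follows from $\Delta^g u_r\in\F$ by an integration by parts argument as in Lemma \ref{lma: con delta}). Hence
\[
\frac{1}{h}\int \Gamma(u_{r+h}-u_r,u_{r+h}+u_r)\psi(u_{r+h})g\,dm \longrightarrow 2\int \Gamma(\Delta^g u_r,u_r)\psi(u_r)g\,dm,
\]
using that $\psi(u_{r+h})\to\psi(u_r)$ uniformly bounded and strongly in $L^2$. By $\psi\in\cC^2(\Im(v))$, $(\psi(u_{r+h})-\psi(u_r))/h \to \psi'(u_r)\Delta^g u_r$ in $L^2$, and dominated convergence with $\Gamma(u_r)\in L^\infty$ gives
\[
\frac{1}{h}\int \Gamma(u_r)[\psi(u_{r+h})-\psi(u_r)]g\,dm \longrightarrow \int \Gamma(u_r)\psi'(u_r)\Delta^g u_r\,g\,dm.
\]
Adding the two pieces gives the claimed formula.

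The principal obstacle is Step~1: the assertion that $\Delta^g u_r\in\F$ uniformly and $\Gamma(u_r)\in L^\infty$ uniformly. For the former, one needs that $P_r^g$ commutes with $\Delta^g$ on a suitable core (so that $\Delta^g u_r = P_r^g \Delta^g v$), which is standard for symmetric Markovian semigroups. For the latter, since the perturbed measure $g\,m$ need not preserve RCD$(K,\infty)$ exactly (the Lipschitz function $\log g$ need not have a Hessian bound), one must argue via the reverse local Poincaré inequality for $P_r^g$ proven in Theorem \ref{thmlocpoinc}, applied to the time-independent mm-space $(X,d,gm)$ — which enjoys the gradient estimate thanks to the log-Lipschitz perturbation being absorbed into the constants.
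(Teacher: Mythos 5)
Your Steps 2--3 follow the same basic scheme as the paper (split the difference quotient of $\Gamma(u_r)\psi(u_r)$ into a $\Gamma$-part and a $\psi$-part, use $\Delta^g u_r=P^g_r\Delta^g v$ for uniform bounds, pass to the limit), but Step 1 contains a genuine gap. The claimed uniform bound $\sup_r\|\Gamma(u_r)\|_\infty<\infty$ is justified by asserting that the perturbed space $(X,d,g\,m)$ ``enjoys the gradient estimate thanks to the log-Lipschitz perturbation being absorbed into the constants,'' and that one may therefore apply Theorem \ref{thmlocpoinc} to $P^g_r$. This is not correct: the $L^2$-gradient estimate for $P^g_r$ is equivalent to a Bakry--\'Emery/RCD-type curvature bound for the form $\E^g(u)=\int\Gamma(u)g\,dm$, which in the smooth picture amounts to a lower bound on $\Ric+\mathrm{Hess}(-\log g)$. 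The hypotheses $g\in\Lip_b(X)$, $c\le g\le C$ control $\Gamma(\log g)$ but give no Hessian or semiconvexity information, so no gradient estimate (with any constant) for $P^g_r$ follows, and Theorem \ref{thmlocpoinc}, whose hypothesis is exactly that gradient estimate, cannot be invoked for $(X,d,g\,m)$. The ``absorption into constants'' in the paper is the comparability \eqref{eq: s.f.} of the forms at different times under ({\bf A1.a}); it is unrelated to curvature of the perturbed static space. Since you use $\|\Gamma(u_r)\|_\infty$ both for the absolute-continuity bound in Step 2 and for the dominated-convergence argument in the second limit of Step 3, the proof as written does not close.

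The good news is that the bound is unnecessary, and this is precisely how the paper's proof differs from yours. From $\Delta^g u_r=P^g_r\Delta^g v$ you already have $\bigl\|(u_{r+h}-u_r)/h\bigr\|_\infty\le\|\Delta^g v\|_\infty$, hence $\bigl\|(\psi(u_{r+h})-\psi(u_r))/h\bigr\|_\infty\le\|\psi'\|_\infty\|\Delta^g v\|_\infty$, so the term $\int\Gamma(u_r)\,\frac{\psi(u_{r+h})-\psi(u_r)}{h}\,g\,dm$ is controlled and passed to the limit by dominated convergence with the $L^1$-majorant $C\,\Gamma(u_r)$; the absolute-continuity estimate only needs $\E^g(u_r)\le\E^g(v)$, $\E^g(\Delta^g u_r)\le\E^g(\Delta^g v)$ and $\|\Delta^g u_r\|_\infty\le\|\Delta^g v\|_\infty$. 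For the $\Gamma$-part the paper additionally integrates by parts, rewriting $\int\Gamma\bigl(\frac{u_{r+h}-u_r}{h},u_{r+h}+u_r\bigr)\psi(u_{r+h})g\,dm$ against $\Delta^g(u_{r+h}+u_r)$, so that only $L^2$-convergence of the difference quotient is needed, and then recovers the stated formula via the chain rule; your direct passage to the limit in that term requires at least weak $\F$-convergence of $(u_{r+h}-u_r)/h$ to $\Delta^g u_r$, which you should state explicitly (it does follow from the uniform bound $\E^g(\Delta^g u_r)\le\E^g(\Delta^g v)$ together with the $L^2$-convergence you invoke). With these repairs your argument becomes a valid variant of the paper's proof.
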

\begin{proof}
Let $0<s<t$. Then it is well-known that, see e.g. \cite[Theorem 4.8]{eks2014} or \cite[Theorem 4.6]{gko},
\begin{align*}
&\left|\int \Gamma(v_t)\psi(v_t)g\, dm-\int \Gamma(v_s)\psi(v_s)g\, dm\right|\\
\leq& \left|\int( \Gamma(v_t)-\Gamma(v_s))\psi(v_t)g\, dm\right|+\left|\int \Gamma(v_s)(\psi(v_t)-\psi(v_s))g\, dm\right|\\
=& \left|\int_s^t\int 2\Gamma(v_r,\Delta^g v_r)\psi(v_t)g\, dm\, dr\right|+\left|\int_s^t\int \Gamma(v_s)\psi'(v_r)\Delta^g v_rg\, dm\, dr\right|\\
\leq &\norm{\psi(v_t)g}_\infty\left(\int_s^t\E^g(v_r)+\E^g(P^g_r\Delta v)\, dr\right)+(t-s)\sup_{r}\norm{\psi'(v_r)g}_\infty\, \E^g(v_s)\, \sup_r\norm{P^g_r\Delta v}_\infty\\
< &\infty,
\end{align*}
which shows $r\mapsto \int \Gamma(v_r)\psi(v_r)g\, dm$ is absolutely continuous.
We compute the a.e. derivative as follows
\begin{align*}
&\frac1h\int\left(\Gamma(v_{r+h})\psi(v_{r+h})-\Gamma(v_r)\psi(v_r) \right)g\, dm\\
=&\int\frac{\Gamma(v_{r+h})-\Gamma(v_r)}{h}\psi(v_{r+h})g\, dm
+\int \Gamma(v_r)\frac{\psi(v_{r+h})-\psi(v_r)}{h}g\, dm\\
=&\int\Gamma\left(\frac{v_{r+h}-v_r}{h},v_{r+h}+v_r\right)\psi(v_{r+h})g\, dm
+\int \Gamma(v_r)\frac{(\psi(v_{r+h})-\psi(v_r))}{h}g\, dm\\
=&-\int\frac{(v_{r+h}-v_r)}{h}\Delta^g(v_{r+h}+v_r)\psi(v_{r+h})g\, dm
-\int\psi'(v_{r+h})\Gamma(v_{r+h},v_{r+h}+v_r)\frac{(v_{r+h}-v_r)}{h}g\, dm\\
&+\int \Gamma(v_r)\frac{(\psi(v_{r+h})-\psi(v_r))}{h}g\, dm.
\end{align*}
Taking the limit $h\to0$ we verify that it holds a.e.
\begin{align*}
\lim_{h\to0}&\frac1h\int\left(\Gamma(v_{r+h})\psi(v_{r+h})-\Gamma(v_r)\psi(v_r) \right)g\, dm\\
=&-2\int \psi(v_r)(\Delta^gv_r)^2 g\, dm-2\int \psi'(v_r)\Gamma(v_r)\Delta^gv_r g\, dm 
+\int\Gamma(v_r)\psi'(v_r)\Delta^gv_r g\, dm.
\end{align*}
Applying the Leibniz and the chain rule we find that for a.e. $r\geq 0$
\begin{align*}
\frac{d}{dr}\int \Gamma(v_r)\psi(v_r)g\, dm=\int(2\Gamma(v_r,\Delta^g v_r)\psi(v_r)+\Gamma(v_r)\psi'(v_r)\Delta^g v_r )g\, dm.
\end{align*}


\end{proof}

\begin{lma}\label{regofexp}
Let $(X,d,m)$ be an {\rm RCD}$(K,\infty)$-space.
Let 
$g\in\D(\Delta)\cap \Lip(X)$ such that $\Delta g\in \F$ and $C\geq g\geq c>0$.
Let $f,\Delta f\in \D(\Delta)\cap \Lip_b(X)$. Then $e^f\in \D(\Delta)\cap \Lip_b(X)$ with $\Delta e^f,\Delta^g e^f\in L^\infty\cap \F$ and $e^f\geq c$ for some $c>0$.

Moreover the functions $t\mapsto P_te^f$ and $t\mapsto P_t^ge^f$ are continuous in $L^\infty(X)$.
\end{lma}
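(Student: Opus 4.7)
The plan has three stages: algebraic properties of $e^f$, membership in $\D(\Delta)\cap\F$ with $L^\infty$-controlled Laplacian, and $L^\infty$-continuity of the two semigroups.

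First I would check the elementary facts. Since $f\in\Lip_b(X)$, $|f|\leq\|f\|_\infty$, so $e^f$ is bounded above by $e^{\|f\|_\infty}$ and below by $e^{-\|f\|_\infty}=:c>0$; combined with $|e^{f(x)}-e^{f(y)}|\leq e^{\|f\|_\infty}\mathrm{Lip}(f)\,d(x,y)$, this gives $e^f\in\Lip_b(X)$. To identify $\Delta e^f$, I invoke the chain rule for $\D(\Delta)$ on RCD$(K,\infty)$ spaces (applied with $\phi(z)=e^z$): for $f\in\D(\Delta)$ with $\Delta f\in L^\infty$ and $\Gamma(f)\in L^\infty$ one has $e^f\in\D(\Delta)$ and
\[
\Delta e^f=e^f\bigl(\Delta f+\Gamma(f)\bigr).
\]
Boundedness of $\Gamma(f)$ follows from $f\in\Lip_b(X)$ together with the RCD$(K,\infty)$ gradient estimate (equivalently, Savaré's self-improvement of Bochner, see \cite[Cor.~3.5]{savare}). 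Since all three factors lie in $L^\infty$, we conclude $\Delta e^f\in L^\infty$.

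Next I would upgrade to $\F$-regularity. The decomposition $\Delta e^f=e^f\Delta f+e^f\Gamma(f)$ expresses $\Delta e^f$ as a sum of products of pairs of functions each in $\F\cap L^\infty$: indeed $e^f\in\F\cap L^\infty$ since $e^f\in\Lip_b(X)$ on a finite-measure space; $\Delta f\in\F$ since $\Delta f\in\D(\Delta)\cap\Lip_b$ by hypothesis; and $\Gamma(f)\in\F$ by \cite[Theorem~3.4]{savare} applied under the standing assumptions $f\in\D(\Delta)$, $\Delta f\in L^\infty$, $\Gamma(f)\in L^\infty$. The Leibniz rule for Dirichlet forms (products of bounded $\F$-functions are in $\F$) then gives $\Delta e^f\in\F$. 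For $\Delta^g e^f$, use the identity $\Delta^g u=\Delta u+\Gamma(\log g,u)$ (valid because $\log g\in\F\cap L^\infty$ thanks to $c\leq g\leq C$ and $g\in\Lip\cap\D(\Delta)$), so that
\[
\Delta^g e^f=\Delta e^f+\frac{e^f}{g}\,\Gamma(g,f).
\]
Boundedness of the extra term follows by Cauchy--Schwarz from $\Gamma(g),\Gamma(f)\in L^\infty$ and $g\geq c>0$, and its $\F$-membership follows again from the Leibniz rule together with $\Gamma(g,f)\in\F$ (which holds because $\Gamma(g),\Gamma(f),\Gamma(g+f)\in\F$ by Savaré and the polarization identity, and because $1/g\in\F\cap L^\infty$ by the chain rule applied to $z\mapsto 1/z$ on $[c,C]$).

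For the continuity statements, I would use that once $u\in\D(\Delta)$ with $\Delta u\in L^\infty$, the identity
\[
P_t u-u=\int_0^t P_s(\Delta u)\,ds
\]
holds in $L^2$, and Markovianity of $P_s$ on $L^\infty$ yields $\|P_tu-u\|_\infty\leq t\|\Delta u\|_\infty$. Applying this with $u=e^f$, combined with the semigroup property and contractivity, gives $L^\infty$-continuity of $t\mapsto P_te^f$ on $[0,\infty)$. The same argument applies verbatim to $P_t^g$: the weighted Dirichlet form $\mathcal{E}^g$ is associated with the RCD space obtained by the bounded weight $g$, so $(P_t^g)$ is Markovian on $L^\infty$, and the bound $\|P_t^g e^f-e^f\|_\infty\leq t\|\Delta^g e^f\|_\infty$ together with the semigroup property yields continuity.

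The main obstacle is not the algebra of the chain rule but securing $\Gamma(f)\in\F$ (and the analogous claim for $g$), which is exactly the content of Savaré's self-improvement theorem; without it, the $\F$-regularity of $\Delta e^f$ and $\Delta^g e^f$ (needed by the authors in the preceding applications) would not follow from the pointwise chain-rule identities alone.
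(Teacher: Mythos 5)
Your proof is correct and follows essentially the same route as the paper: the chain rule giving $\Delta e^f=e^f(\Delta f+\Gamma(f))$, Savar\'e's self-improvement to put the carr\'e-du-champ terms in $\F$, the identity $\Delta^g=\Delta+\Gamma(\log g,\cdot)$, and the bound $\norm{P_te^f-P_se^f}_\infty\le(t-s)\norm{\Delta e^f}_\infty$ obtained from the Bochner-integral representation together with Markovianity. The only cosmetic difference is that you put $\Gamma(g,f)$ into $\F$ by polarization plus the Leibniz rule, whereas the paper applies Savar\'e's bilinear estimate (his Theorem 3.4, with Lemma 3.2 used for $\E(\Gamma(f))$) directly to $\Gamma(\log g,e^f)$ --- the same machinery either way.
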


\begin{proof}
Since $f$ is bounded, $e^f$ is bounded as well and $e^f\geq e^{-\norm{f}_\infty}>0$. By the chain rule we have $\Gamma(e^f)=e^{2f}\Gamma(f)\in L^\infty(X)$ and
\begin{align*}
\Delta(e^f)=e^f(\Gamma(f)+\Delta f)
\end{align*}
which belongs to $L^2(X)\cap L^\infty(X)$. Next we show that $\Delta e^f\in\F$. For this note that
\begin{align*}
\E(e^f\Delta f)\leq2\int e^{2f}\Gamma(\Delta f)+(\Delta f)^2e^{2f}\Gamma(f)\, dm
\end{align*}
is bounded and
\begin{align*}
\E(e^f\Gamma(f))\leq& 2\int e^{2f} \Gamma(\Gamma(f))+\Gamma(f)^3e^{2f}\, dm\\
\leq &2\norm{e^{2f}}_\infty\int (-2K\Gamma(f)^2-\Gamma(f)\Gamma(f,\Delta f))\, dm+2\int \Gamma(f)^3e^f\, dm
\end{align*}
is bounded as well. In the last step we used \cite[Lemma 3.2]{savare} to bound $\E(\Gamma(f))$. Summing $\E(e^f\Delta f)$ and $\E(e^f\Gamma(f))$ yields that $\Delta e^f\in\F$.

Similarly we show that $\Delta^g e^f\in \D(\E^g)$. Recall first that $\D(\Delta)\subset\D(\Delta^g)$ and
\begin{align*}
\Delta^g e^f=\Delta e^f+\Gamma(\log g,e^f)
\end{align*}
which is an $L^\infty(X)$-function. Moreover note that
\begin{align*}
\E(\Delta^g e^f)=\int\Gamma(\Delta e^f)+\Gamma(\Gamma(\log g,e^f))\, dm.
\end{align*}
For the first summand we know already that it is bounded. For the second summand we use \cite[Theorem 3.4]{savare} and obtain
\begin{align*}
\int \Gamma(\Gamma(\log g,e^f))\, dm\leq 2\int (\gamma_2(\log g)-K\Gamma(\log g))\Gamma(e^f)+(\gamma_2(e^f)-K\Gamma(e^f))\Gamma(\log g)\, dm,
\end{align*}
where $\gamma_2(\log g),\gamma_2(e^f)$ are $L^1$-functions, since $\log g$ and $e^f$ belong to $\Lip_b(X)\cap \D(\Delta)$ with $\Delta \log g, \Delta e^f\in \F$.

For the last claim, note that 
\begin{align*}
P_te^f-P_se^f=\int_s^t\Delta P_r e^f\, dr,
\end{align*}
where the last integral has to be understood as a Bochner integral. Hence
\begin{align*}
\norm{P_te^f-P_se^f}_\infty=\norm{\int_s^t\Delta P_r e^f\, dr}_\infty\leq \int_s^t\norm{\Delta e^f}_\infty\, dr\leq (t-s)\norm{\Delta e^f}_\infty.
\end{align*}
The other statement follows analogously.
\end{proof}

\bibliography{srf-KoSt-6}
\end{document}